\documentclass[10pt]{amsart}


\usepackage[dvipsnames, rgb]{xcolor}
\usepackage{amssymb, amsmath, amsthm, amsfonts, mathrsfs}
\usepackage{graphicx}
\usepackage{tikz}
\usepackage{tikz-cd}
\usetikzlibrary{shapes,arrows}
\usepackage{float}
\usepackage{hvfloat}
\usepackage[width=\textwidth]{caption}
\usepackage{rotating}
\usepackage{adjustbox}

\usepackage{url}
\usepackage[all,arc,2cell]{xy}
\UseAllTwocells
\usepackage{enumerate}

\usepackage{multicol}

\usepackage{hyperref}
  \definecolor{dark-red}{rgb}{0.6,0.15,0.15}
   \definecolor{dark-blue}{rgb}{0.15,0.15,0.6}
   \definecolor{medium-blue}{rgb}{0,0,0.5}
   
   \definecolor{cbf-bluedark}{RGB}{17, 112, 170}
           \definecolor{cbf-blue}{RGB}{95, 162, 206}
               \definecolor{cbf-bluepale}{RGB}{163, 204, 233}
               
                        \definecolor{cbf-bluepale2}{RGB}{162, 200, 236}
        
      \definecolor{cbf-orangedark}{RGB}{200, 82, 0}
          \definecolor{cbf-orange}{RGB}{252, 125, 11}
      \definecolor{cbf-orangepale}{RGB}{255, 188, 121}

\setcounter{secnumdepth}{3}
\setcounter{tocdepth}{2}
\hypersetup{
    colorlinks, 
    linkcolor=dark-red,
    citecolor=dark-blue, urlcolor=medium-blue
}

\usepackage[nameinlink,capitalise,noabbrev]{cleveref}

\numberwithin{equation}{section}


\newtheorem{theorem}{Theorem}[section]

\newtheorem{cor}{Corollary}[section]
\newtheorem{corollary}{Corollary}[section]

\newtheorem{prop}{Proposition}[section]

\newtheorem{lem}{Lemma}[section]
\newtheorem{lemma}{Lemma}[section]

\theoremstyle{definition}
\newtheorem{defn}{Definition}[section]
\newtheorem{definition}{Definition}[section]

\newtheorem{rem}{Remark}[section]
\newtheorem{warn}{Warning}[section]
\newtheorem*{warn*}{Warning}

\makeatletter
\let\c@equation=\c@thm
\let\c@lem=\c@thm
\let\c@theorem=\c@thm
\let\c@lemma=\c@thm
\let\c@Theorem=\c@thm
\let\c@Lemma=\c@thm
\let\c@cor=\c@thm
\let\c@corollary=\c@thm
\let\c@Corollary=\c@thm
\let\c@conj=\c@thm
\let\c@conjecture=\c@thm
\let\c@prop=\c@thm
\let\c@proposition=\c@thm
\let\c@Proposition=\c@thm
\let\c@defn=\c@thm
\let\c@definition=\c@thm
\let\c@Definition=\c@thm
\let\c@notation=\c@thm
\let\c@note=\c@thm
\let\c@exmp=\c@thm
\let\c@ex=\c@thm
\let\c@exmps=\c@thm
\let\c@rem=\c@thm
\let\c@warn=\c@thm
\let\c@claim=\c@thm
\let\c@convention=\c@thm
\let\c@conventions=\c@thm
\let\c@quest=\c@thm
\let\c@facts=\c@thm
\let\c@slogan=\c@thm
\let\c@ass=\c@thm
\makeatother

\newcommand{\F}{\mathbb{F}}

\newcommand{\Z}{\mathbb{Z}}
\newcommand{\W}{\mathbb{W}}
\newcommand{\Q}{\mathbb{Q}}
\newcommand{\G}{\mathbb{G}}

\newcommand{\FF}{\mathbb{F}}

\newcommand{\GG}{\mathbb{G}}

\def\SS{\mathbb{S}}

\newcommand{\xra}{\xrightarrow}

\def\makeop#1{\expandafter\def\csname #1\endcsname{\mathop{\mathrm{#1}}\nolimits}}

\makeop{id}
\makeop{Mod}
\makeop{Hom}
\makeop{Tot}
\makeop{gr}
\makeop{Out}
\makeop{Hom}
\makeop{Ext}
\makeop{End}
\makeop{Aut}
\makeop{Tor}
\makeop{ev}
\makeop{hocolim}
\makeop{holim}
\makeop{map}
\makeop{id}
\makeop{colim}
\makeop{im}
\makeop{pic}
%



\newcommand{\WW}{\mathbb{W}}

\usepackage{xargs} 
\setlength{\marginparwidth}{2.5cm}
\usepackage[textwidth=2.5cm]{todonotes}
\newcommandx{\irina}[2][1=]{\todo[linecolor=red,backgroundcolor=red!25,bordercolor=red,#1]{#2}}
\newcommandx{\cuong}[2][1=]{\todo[linecolor=orange,backgroundcolor=orange!25,bordercolor=orange,#1]{#2}}
\newcommandx{\agnes}[2][1=]{\todo[linecolor=blue,backgroundcolor=blue!25,bordercolor=blue,#1]{#2}}
\newcommandx{\paul}[2][1=]{\todo[linecolor=green,backgroundcolor=green!25,bordercolor=green,#1]{#2}}
\newcommandx{\vesna}[2][1=]{\todo[linecolor=magenta,backgroundcolor=magenta!25,bordercolor=magenta,#1]{#2}}
\newcommandx{\vesnainline}[2][1=]{\todo[linecolor=magenta,backgroundcolor=magenta!25,bordercolor=magenta,#1,inline]{#2}}
\newcommandx{\agnesinline}[2][1=]{\todo[linecolor=blue,backgroundcolor=blue!25,bordercolor=blue,#1,inline]{#2}}


\newcommandx{\tagnes}[2][1=]{\todo[linecolor=green,backgroundcolor=green!25,bordercolor=green,#1]{#2}}

\makeatletter
\newcommand{\mylabel}[2]{#2\def\@currentlabel{#2}\label{#1}}
\makeatother

\newcommand{\WG}[1]{\WW[\![ #1 ]\!]}

\newcommand{\ZG}[1]{\Z_{2}[\![ #1 ]\!]}
\newcommand{\WSG}[1]{\WW[ #1 ]}

\newcommand{\ba}{\overline{\alpha}}

\newcommand{\sD}{\mathcal{D}}

\newcommand{\Sn}{\mathbb{S}}

\newcommand{\tr}{\mathrm{tr}}
\newcommand{\Gal}{\mathrm{Gal}}

\newcommand{\thetazero}{\widetilde{\theta}_\phi}
\newcommand{\thetaclass}{\theta}
\newcommand{\thetaphi}{\theta_\phi}

\newcommand{\Jideal}{\mathcal{J}}
\newcommand{\Iideal}{\mathcal{I}}

\title[Duality resolution at $p=2$]{The duality resolution at $n=p=2$}
\date{\today}

\author[Beaudry]{Agn\`es Beaudry}
\address{Department of Mathematics, University of Colorado Boulder, Campus Box 395, Boulder, CO, 80309, USA}
\email{agnes.beaudry@colorado.edu}

\author[Bobkova]{Irina Bobkova}
\address{Department of Mathematics, Texas A\&M University, College Station, TX, 77843, USA}
\email{ibobkova@tamu.edu}

\author[Henn]{Hans-Werner Henn}
\address{Institut de Recherche Math\'ematique Avanc\'ee, C.N.R.S. et 
Universit\'e de Strasbourg, rue Ren\'e Descartes, 67084 Strasbourg Cedex, France}
\email{henn@math.unistra.fr}

\begin{document}
\maketitle

\begin{abstract}
Working at the prime $2$ and chromatic height $2$, we construct a finite resolution of the homotopy fixed points of Morava $E$-theory with respect to the subgroup $\mathbb{G}_2^1$ of the Morava stabilizer group.
This is an upgrade of the finite resolution of the homotopy fixed points of $E$-theory with respect to the subgroup $\mathbb{S}_2^1$ constructed in work of Goerss--Henn--Mahowald--Rezk, Beaudry and Bobkova--Goerss.
\end{abstract}
\setcounter{tocdepth}{2}
\tableofcontents


\section{Introduction}
Throughout this paper, we will let $E = E_2$, the Morava $E$-theory spectrum at the prime $2$ over $\F_4$, modeled using the formal group law $\Gamma$, which is either the Honda formal group law $\Gamma_H$ or the formal group law of an elliptic curve $\Gamma_E$. We let $\Sn_2 = \Sn(\Gamma)$ be the group of automorphisms of the formal group law $\Gamma$ over $\F_4$ and $\GG_2= \GG_2(\Gamma)$ its extension by the Galois group $\Gal = \Gal(\F_4/\F_2)$. We let $\Sn_2^1$ and $\G_2^1$ be their norm one subgroups (defined below). We refer the reader to \cite{BarthelBeaudry}, \cite{ghmr}, \cite{BobkovaGoerss} or \cite{HennCent}, among others, for more background on chromatic homotopy theory.

In this paper, we will be discussion resolutions of spectra in the sense of \cite[3.3.1]{HennRes} (see \cref{defn:resofspectra} below).
In  \cite{BobkovaGoerss}, the authors constructed a resolution of spectra
\begin{equation}\label{eq:topres}
\xymatrix{ E^{h\mathbb{S}_2^1}  \ar[r] &  E^{hG_{24}} \ar[r] & E^{hC_6} \ar[r] &  E^{hC_6}\ar[r] & \Sigma^{48}E^{hG_{24}}  }
\end{equation} 
where $G_{24}$ and $C_6$ are certain finite subgroups of $\Sn_2^1$.
This resolution has been central to computations in $K(2)$-local homotopy theory at the prime $2$, for example, in the developments on the Chromatic Splitting Conjecture \cite{BeaudryCSC}, \cite{BGH} and the computation of the exotic Picard group \cite{BBGHPS_Pic}. 
It is an analogue of the duality resolution of $E^{h\GG_2^1}$ at the prime $p=3$ constructed in \cite{ghmr}. 

Despite the fact that the resolution \eqref{eq:topres} has more than proved its worth, the fact that it resolves $E^{h\mathbb{S}_2^1} $ rather than $E^{h\mathbb{G}_2^1} $, is unsatisfactory and inconvenient for computations.  The goal of this paper is to remedy this shortcoming. We state our main result, referring the reader to \cref{sec:finitesubgroups} for the definition of the various groups appearing in this statement:

\begin{theorem}\label{thmintro:topres}
There is a resolution of spectra 
\[
\xymatrix{ E^{h\mathbb{G}_2^1}  \ar[r] &  E^{hG_{48}} \ar[r] & E^{hG_{12}} \ar[r] &  E^{hG_{12}}\ar[r] & \Sigma^{48}E^{hG_{48}}  }.
\]
We call this resolution the \emph{topological duality resolution}.
\end{theorem}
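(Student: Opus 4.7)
The strategy is Galois descent from the resolution \eqref{eq:topres}. Since $\mathbb{G}_2^1 = \mathbb{S}_2^1 \rtimes \mathrm{Gal}$, and the relevant subgroups satisfy $G_{48} \cong G_{24} \rtimes \mathrm{Gal}$ and $G_{12} \cong C_6 \rtimes \mathrm{Gal}$, there are natural equivalences $(E^{hG_{24}})^{h\mathrm{Gal}} \simeq E^{hG_{48}}$, $(E^{hC_6})^{h\mathrm{Gal}} \simeq E^{hG_{12}}$, and $(E^{h\mathbb{S}_2^1})^{h\mathrm{Gal}} \simeq E^{h\mathbb{G}_2^1}$. Thus it suffices to exhibit a $\mathrm{Gal}$-equivariant refinement of \eqref{eq:topres} and take $(-)^{h\mathrm{Gal}}$ termwise; the shift $\Sigma^{48}$ should survive since $\mathrm{Gal}$ acts trivially on the suspension coordinate.

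My plan would be to first address the algebraic resolution of the trivial continuous $\mathbb{Z}_2[[\mathbb{S}_2^1]]$-module $\mathbb{Z}_2$ by permutation modules that underlies \eqref{eq:topres}, and lift it to a resolution of $\mathbb{Z}_2[[\mathbb{G}_2^1]]$-modules. The key observation is that, because $\mathbb{G}_2^1 = \mathbb{S}_2^1 \rtimes \mathrm{Gal}$ with $\mathbb{S}_2^1 \cap G_{48} = G_{24}$ (and similarly for $G_{12}$), restriction induces isomorphisms $\mathbb{Z}_2[[\mathbb{G}_2^1/G_{48}]] \cong \mathbb{Z}_2[[\mathbb{S}_2^1/G_{24}]]$ of underlying $\mathbb{Z}_2[[\mathbb{S}_2^1]]$-modules, each of which carries a canonical $\mathrm{Gal}$-action. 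The nontrivial content is that the $\mathbb{S}_2^1$-equivariant differentials admit a $\mathrm{Gal}$-equivariant refinement. This is a cohomological question: the obstruction at each stage $s$ lies in $H^1(\mathrm{Gal}; \mathrm{Hom}_{\mathbb{Z}_2[[\mathbb{S}_2^1]]}(P_s, P_{s+1}))$, and I would compute these groups directly using the explicit description of the differentials in \cite{BobkovaGoerss}, whose building blocks are specific elements of $\mathbb{S}_2^1$ whose $\mathrm{Gal}$-conjugates can be made explicit.

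Having produced the $\mathbb{G}_2^1$-equivariant algebraic resolution, the second step is its topological realization using the obstruction theory of \cite[\S 3]{HennRes}. Lifting each algebraic differential of permutation modules to a map of spectra requires vanishing of certain Ext-group obstructions computed in Morava modules, which by standard change-of-rings reduce to group-cohomological computations for the finite subgroups $G_{48}$ and $G_{12}$ of $\mathbb{G}_2^1$; these should be tractable.

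The principal obstacle is the first step: at $p=2$ the Gal-equivariance cannot be enforced by an averaging argument, since $|\mathrm{Gal}|=2$ is not invertible. If the differentials of \cite{BobkovaGoerss} turn out not to be Gal-equivariant on the nose, one must modify them by explicit chain homotopies, and the bookkeeping could be delicate. A useful fallback would be to construct the $\mathbb{G}_2^1$-equivariant algebraic resolution from scratch, mirroring the arithmetic duality argument of \cite{ghmr} at $p=3$ but adapted to $p=2$: one would invoke Poincar\'e duality for $\mathbb{G}_2^1$ as a $2$-adic analytic group of cohomological dimension $3$ to produce the requisite exact sequence of permutation modules, then verify a posteriori that restriction to $\mathbb{S}_2^1$ recovers the resolution underlying \eqref{eq:topres}.
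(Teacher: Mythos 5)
Your proposal correctly anticipates the main obstruction and even names the workaround, but the primary strategy as written would fail, and the fallback — which is essentially what the paper does — omits the two technical moves that make the from-scratch construction succeed.

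On the primary approach: the paper explicitly observes (Introduction, after \cref{thmintro:algres}) that the maps of the resolution \eqref{eq:topres} are \emph{not} $\Gal$-equivariant, so termwise $(-)^{h\Gal}$ is unavailable. Your obstruction-theoretic patch — land the obstruction in $H^1\bigl(\Gal;\Hom_{\Z_2[\![\SS_2^1]\!]}(P_s,P_{s+1})\bigr)$ and compute — is not a formality here: since $|\Gal| = 2 = p$, these $H^1$ groups are $2$-torsion and there is no averaging argument; nothing forces the obstruction classes to vanish. Moreover, the maps in \cite{BeaudryRes} and \cite{BobkovaGoerss} are only determined up to large indeterminacy (notably the middle map $\delta_2$, which is inexplicit), so even formulating the obstruction requires making choices that must be controlled. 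You identify this difficulty but do not resolve it.

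On the fallback (construct the $\GG_2^1$ resolution from scratch): this is the right idea, but two ingredients are missing. First, the construction does not take place in the category of $\Z_2[\![\GG_2^1]\!]$-modules; the paper works with \emph{Galois-twisted} $\W[\![\GG_2^1]\!]$-modules (coefficients $\W = W(\F_4)$ with the Galois group acting on $\W$ as well as on the group), and this structure is what makes the duality argument and the comparison with Morava modules (via $\Hom_{\W[\Gal]}(\W{\uparrow}^{\GG_2}_G, E_*) \cong E_* E^{hG}$) go through. Second, the Poincar\'e duality input is not for $\GG_2^1$ itself (which has torsion, hence only virtual cohomological dimension $3$) but for a torsion-free open subgroup; the natural candidate $K^1$ fails because it is not Galois-invariant in $\SS_2^1$, so the paper passes to $P\GG_2^1 = \GG_2^1/C_2$, where $PK^1$ \emph{is} normal, constructs the resolution there, and restricts along $\GG_2^1 \to P\GG_2^1$. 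Without this reduction the duality argument cannot be set up. Once the algebraic resolution is in hand, the topological realization is carried out with Hurewicz-image and Nakayama arguments essentially as in \cite{BobkovaGoerss}, which is close to your obstruction-theoretic sketch for the second step.

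Finally, a small factual point: the isomorphisms $G_{48} \cong G_{24} \rtimes \Gal$ and $G_{12} \cong C_6 \rtimes \Gal$ asserted at the start of your proposal do not both hold: for $\Gamma = \Gamma_H$ one has $G_{12}(\Gamma_H) \cong C_3 \rtimes C_4$, which is not a semidirect product of $C_6$ by $C_2$. This does not affect the equivalences $(E^{hH})^{h(G/H)} \simeq E^{hG}$, which only require $H$ normal in $G$, but the claimed group-theoretic splittings are not uniformly available.
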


\begin{rem}
It is unfortunate that we only resolve ``half'' of the $K(2)$-local sphere rather than $L_{K(2)}S^0 \simeq E^{h\GG_2}$  itself. This, however, cannot be remedied. The duality resolution at $p=2$ cannot be doubled up as in \cite{ghmr} to give a resolution of $E^{h\mathbb{G}_2}$. To resolve $E^{h\mathbb{S}_2}$ or $E^{h\mathbb{G}_2}$ at $p=2$, one needs larger resolutions such as Henn's centralizer resolution \cite{HennCent}. We do not address this in this paper.
\end{rem}

To obtain the resolution \eqref{eq:topres}, one proceeds as follows. Note that
 applying $E_*(-)$ to \eqref{eq:topres} gives rise to an exact sequence of Morava modules
\begin{equation}\label{eq:mormodres}
\xymatrix@C=1pc{0 \ar[r] & E_*E^{h\SS_2^1}  \ar[r] &   E_*E^{hG_{24}}  \ar[r] &E_*E^{hC_6} \ar[r] & E_*E^{hC_6} \ar[r] & E_*\Sigma^{48}E^{hG_{24}}  \ar[r] & 0. }
\end{equation} 
The first step to building \eqref{eq:topres} is in fact to produce this exact sequence of Morava modules via algebraic means. 
Specifically, in \cite{BeaudryRes}, based on work of  Goerss--Henn--Mahowald--Rezk, the author  constructed an exact sequence of $\Z_2[\![\Sn_2^1]\!]$-modules
\begin{equation}\label{eq:algres}
\xymatrix@C=1pc{ 0 \ar[r] & \Z_2[\![\Sn_2^1/G_{24}']\!] \ar[r] &  \Z_2[\![\Sn_2^1/C_6]\!]  \ar[r] &  \Z_2[\![\Sn_2^1/C_6]\!]  \ar[r] &  \Z_2[\![\Sn_2^1/G_{24}]\!]  \ar[r] & \Z_2 \ar[r] & 0.}
\end{equation}
Applying $\Hom^c_{\Z_2}(-, E_*)$ to the resolution \eqref{eq:algres} produces the exact sequence \eqref{eq:mormodres} of Morava modules. The remainder of the construction consists of proving that it can be realized topologically as in \eqref{eq:topres}. This was a deceivingly difficult task for the resolution of $E^{h\mathbb{S}_2^1}$, achieved in \cite{BobkovaGoerss}. However, in our case, this step goes through exactly like in that paper and so we were saved a lot of pain.

In order to prove \cref{thmintro:topres}, we need to modify this procedure to keep track of the Galois group $\Gal = \Gal(\F_4/\F_2) = \mathbb{G}_2^1/{\mathbb{S}_2^1}$. Note that it is not possible to simply take $\Gal$ fixed points of the resolution \eqref{eq:topres} because the maps are not $\Gal$-equivariant. We need to construct the resolution of \cref{thmintro:topres} from scratch. We  start by constructing the algebraic version of the resolution. Our main algebraic result is as follows.
\begin{theorem}\label{thmintro:algres}
There is an exact sequence in the category of complete left Galois-twisted $\GG_2$-modules (called the \emph{algebraic duality resolution})
\[
\xymatrix@C=1pc{ 0 \ar[r] & \W[\![\GG_2^1/G_{48}']\!] \ar[r] &  \W[\![\GG_2/G_{12}]\!]  \ar[r] &  \W[\![\GG_2/G_{12}]\!] \ar[r] &  \W[\![\GG_2^1/G_{48}]\!]  \ar[r] & \W \ar[r] & 0.}
\]
The maps in the resolution are described in \cref{thm:updatedualres}.
\end{theorem}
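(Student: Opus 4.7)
The plan is to upgrade the existing algebraic duality resolution \eqref{eq:algres} of $\Z_2[\![\Sn_2^1]\!]$-modules to a resolution in the category of complete Galois-twisted $\GG_2$-modules. The first step will be to tensor \eqref{eq:algres} with $\W$ over $\Z_2$; since $\W$ is free of rank two over $\Z_2$, the resulting sequence of complete $\W[\![\Sn_2^1]\!]$-modules is exact and has terms of the form $\W[\![\Sn_2^1/H]\!]$ for $H \in \{G_{24},\, G_{24}',\, C_6\}$.

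The second step is to identify these terms with the modules appearing in the statement. For a finite subgroup $H \subseteq \Sn_2^1$ that is normalised by $\Gal$, the module $\W[\![\Sn_2^1/H]\!]$ carries a natural diagonal Galois action (Frobenius on $\W$, conjugation on $\Sn_2^1/H$); with this structure it is a Galois-twisted $\GG_2^1$-module canonically isomorphic to $\W[\![\GG_2^1/(H \rtimes \Gal)]\!]$. This yields the outer terms of the statement for $H = G_{24}$ and $H = G_{24}'$. For the two middle terms, a further induction along $\GG_2^1 \hookrightarrow \GG_2$ will promote $\W[\![\GG_2^1/(C_6 \rtimes \Gal)]\!]$ into the completed induced module $\W[\![\GG_2/G_{12}]\!]$.

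The main obstacle will be arranging the differentials to be compatible with the diagonal Galois action, so that the complex truly lives in the target category. In \cite{BeaudryRes} each of the four differentials is presented by right-multiplication by an explicit element of $\Z_2[\![\Sn_2^1]\!]$ built from specific elements of $\Sn_2$ (the distinguished generators of the finite subgroups together with chosen coset representatives). I would treat them one at a time: the augmentation is automatically equivariant, while for each of the three remaining differentials one must verify Galois-invariance of the defining element under the action coming from Frobenius on the coefficients of the formal group law. Should a given choice fail to be invariant, Shapiro's lemma—which identifies $\Hom_{\W[\![\GG_2^1]\!]}(\W[\![\GG_2^1/H]\!], N)$ with $N^H$—supplies the freedom to replace it by any other representative of the same map between induced modules, and I would use this freedom to produce a Galois-fixed representative. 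The explicit formulas obtained will constitute the content of \cref{thm:updatedualres}.

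Once Galois-equivariance of the differentials has been established, the resulting complex lies in the category of complete Galois-twisted $\GG_2$-modules, and its exactness is inherited from the tensored sequence of the first step; this completes the argument.
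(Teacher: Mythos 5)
Your proposal misses the central difficulty of the result and the paper explicitly warns against the route you are taking. The first step (tensoring \eqref{eq:algres} with $\W$) is fine, but the claim that the differentials can then be adjusted to be Galois-equivariant by exploiting Shapiro's lemma is where the argument breaks down. Shapiro's lemma says that a map out of an induced module is determined by its value on the generator, so there is indeed some ambiguity in the representing element; but the maps $\partial_p$ in \eqref{eq:algres} genuinely fail to commute with the Galois action (e.g.\ $\delta_1 = e-\alpha$ and $\alpha^\sigma=-\alpha^{-1}\neq\alpha$), and one can check directly that $\tr_\sigma(\delta_1)e_0 \neq \delta_1 e_0$, so the Galois-equivariant map that the paper eventually uses is \emph{not} a different representative of the same map between $\W[\![\Sn_2^1]\!]$-modules — it is a different map. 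Once you change the maps, the exactness you inherited from the tensored sequence evaporates and must be re-established. Proving that a Galois-equivariant choice of differentials exists \emph{and} keeps the sequence exact is the entire content of Section~\ref{sec:algres}; it cannot be deduced by general nonsense.

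The machinery you would need and did not mention is the Poincar\'e duality subgroup. The paper works throughout with $P\GG_2^1$ rather than $\GG_2^1$ because $K^1$ is not Galois-invariant in $\Sn_2^1$, whereas $PK^1$ is Galois-invariant in $P\Sn_2^1$ (\cref{warn:KnotGal}). The resolution is built one step at a time (\cref{lem:N0}, \cref{lem:N1}, \cref{lem:M3}): at each stage the kernel is identified using that $\sD_p$ is a free $\WG{PK^1}$-module and that $PK^1$ is an orientable Poincar\'e duality group of dimension~$3$ (\cref{lem:PD}), the generator of the next kernel is found via $H_1(PK^1,\W)$ as a $\W[PG_{48}]$-module (\cref{lem:H1KZ2}), and the last map is pinned down by a cap-product/duality argument. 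Moreover, the middle map $\partial_2$ is the composite of two maps, neither of which is explicit, and is only \emph{approximated} by $\tr_\sigma(\delta_2)$ modulo the ideal $\Iideal$ (\cref{lem:thetaupsilon}, \cref{lem:thetaphibest}); the equivariant element $\theta_\phi$ must be constructed by the averaging formula \eqref{eq:thetaphi} and then shown to satisfy the required properties — this does not follow from the non-equivariant $\theta$ by inspection. None of this can be bypassed, which is exactly the content of the paper's Warning~3.3.
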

Once we have the algebraic resolution, the process of realizing it topologically follows an arc similar to that described for the resolution \eqref{eq:topres}.

\subsection*{Organization of the paper}
In \cref{sec:background}, we review the background necessary for our construction. In \cref{sec:algres}, we construct the algebraic resolution and prove \cref{thmintro:algres}. In \cref{sec:topres}, we realize the resolution  topologically, proving  \cref{thmintro:topres}.

\subsection*{Acknowledgements}
This material is based upon work supported by 
the National Science Foundation under grants No.  DMS-2143811 and DMS-2239362. We thank Mark Behrens, Paul Goerss, Vesna Stojanoska and Viet-Cuong Pham for many conversations related to this work.


\section{Background}\label{sec:background}
We begin by introducing some of the key objects. We mainly use the notational conventions of \cite{HennCent}. Although this material can be found elsewhere (for example, \cite{BeaudryRes}, \cite{HennCent}, \cite{BeaudryTowards}, \cite{BobkovaGoerss}, \cite{BBGHPS_Pic}), this is all quite technical so we  repeat it here for the convenience of the reader.
\subsection{The Witt vectors}
We let $\W=W(\F_4)$ be the ring of Witt vectors on $\F_4$. Note that $\Gal = \Gal(\F_4/\F_2)$ acts on $\W$. We denote this action by $g(w)= w^{g}$ for $w\in\W$. We fix a choice of primitive third root of unity $\omega \in \W$. We let $\sigma \in \Gal$ be the Frobenius morphism.

\begin{definition}Let $\alpha \in \W$ be given by
\begin{align}\label{eq:alpha}\alpha : = \frac{1-2\omega}{\sqrt{-7}}\end{align}
Here, $\sqrt{-7}$ is the unique element of $\W$ which squares to $-7$  and satisfies $\sqrt{-7} \equiv 1+4 \mod 8$. Such an element exists by Hensel's Lemma. 
We also define
\begin{align}\label{eq:pi} \pi:= 1+2\omega .\end{align}
\end{definition}
\begin{rem}
The element $\alpha$ satisfies the identity
\[\alpha \alpha^{\sigma} =-1\]
and therefore,
$\alpha^{\sigma} = -\alpha^{-1}$ in $\W$. 
Note further that $\pi^2=-3$ and 
\[\pi \pi^{\sigma} =3 .\]
\end{rem}

\subsection{Formal group laws and their endomorphisms} 
In this paper, we consider two choices of formal group laws defined over $\F_2$: The Honda formal group law $\Gamma_H$ and the formal group law of the super-singular elliptic curve with affine equation $y^2+y =x^3$, which we denote by $\Gamma_E$. 

Let $\Gamma=\Gamma_H$ or $\Gamma_E$.
All endomorphisms of $\Gamma$ over the algebraic closure of $\F_4$ are already defined over $\F_4$. In fact, if we let $\xi_{\Gamma}(x)=x^2$, there is an explicit isomorphism
\[ \End_{\F_4}(\Gamma) \cong \W\langle \xi_{\Gamma} \rangle /(\xi_\Gamma w -w^{\sigma}\xi_\Gamma, \xi_\Gamma^2-2u_{\Gamma})\]
where $u_{\Gamma_H}=1$ and $u_{\Gamma_{E}}=-1$. 
Therefore, an element of $ \End_{\F_4}(\Gamma) $ can be expressed uniquely as $a+b\xi_{\Gamma}$ for $a,b \in \W$. 
We define 
\[\mathbb{S}_2(\Gamma) :=  \End_{\F_4}(\Gamma)^{\times} =  \Aut_{\F_4}(\Gamma).\]
The group $\Gal=\Gal(\FF_4/\FF_2)$ acts on $\End_{\F_4}(\Gamma) $ via its action on $\W$. Let
\[\mathbb{G}_2(\Gamma) :=   \mathbb{S}_2(\Gamma) \rtimes \Gal.\]

\begin{rem}
Alternatively, noting that conjugation by $\xi_{\Gamma}$ on $\mathbb{S}_2(\Gamma)$ realizes the action of $\Gal$, we obtain an isomorphism
\begin{equation}\label{eq:G2quotient}
\mathbb{G}_2(\Gamma) \cong \mathbb{D}_2^{\times}(\Gamma)/\langle \xi_{\Gamma}^2\rangle 
\end{equation}
where $\mathbb{D}_2(\Gamma) = \End_{\F_4}(\Gamma)\otimes \Q$.
This is explained in details in \cite{HennCent}.
\end{rem}

We have the following filtration on $\mathbb{S}_2$. For $i\geq 0$ an integer, we let
\begin{align}\label{eq:filt}
F_{i/2}:= F_{i/2}\mathbb{S}_2(\Gamma):= \{g\in \mathbb{S}_2(\Gamma) \mid g\equiv 1 \mod (\xi_{\Gamma}^i)\}.
\end{align}
For $i>0$, the isomorphism maps the equivalence class of $1+a\xi_{\Gamma}^{i}$ for $a\in \W$ to the modulo $2$ reduction of $a$.

We denote $S_2(\Gamma) := F_{1/2} \mathbb{S}_2(\Gamma)$. 
The exact sequence of groups
\[ 1 \to S_2(\Gamma) \to \mathbb{S}_2(\Gamma) \to \F_4^{\times} \cong C_3\to 1\]
splits by sending a generator of $C_3$ to a primitive third root of unity $\omega \in \W^{\times}\subseteq  \mathbb{S}_2(\Gamma)$.

We also define
\[ K(\Gamma) = \overline{\langle \alpha, F_{3/2}\mathbb{S}_2(\Gamma)\rangle},\]
that is, the closure of the subgroup of $\mathbb{S}_2(\Gamma)$ generated by $\alpha$ and $F_{3/2}\mathbb{S}_2(\Gamma)$. 

\begin{lemma}\label{lem:isoS}
The groups $\mathbb{S}_2(\Gamma_H)$ and $\mathbb{S}_2(\Gamma_E)$ are isomorphic and an isomorphism is given by the map
\[ a+b \xi_{\Gamma_E} \mapsto a+b \alpha \xi_{\Gamma_H}.\]
This isomorphism respects the filtration of \eqref{eq:filt} and induces an isomorphism
\[ K(\Gamma_E) \xrightarrow{\cong} K(\Gamma_H).\]
\end{lemma}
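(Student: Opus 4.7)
The plan is to produce the isomorphism by extending the formula $\phi(a+b\xi_{\Gamma_E}) = a+b\alpha\xi_{\Gamma_H}$ to a ring isomorphism $\End_{\F_4}(\Gamma_E)\to\End_{\F_4}(\Gamma_H)$ of the explicit presentations given above, then restricting to unit groups, and finally tracking the image of the filtration and of the subgroup $K$.

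First I would verify that $\phi$ is a well-defined ring homomorphism by checking that the two relations cutting out $\End_{\F_4}(\Gamma_E)$ are satisfied by $\phi(\xi_{\Gamma_E})=\alpha\xi_{\Gamma_H}$ inside $\End_{\F_4}(\Gamma_H)$. The twist relation is immediate because $\alpha\in\W$ commutes with the Galois-twisted element $w^\sigma$, giving $\alpha\xi_{\Gamma_H}\cdot w = \alpha w^\sigma \xi_{\Gamma_H} = w^\sigma\cdot \alpha\xi_{\Gamma_H}$. The quadratic relation $\xi_{\Gamma_E}^2=-2$ must be matched by $(\alpha\xi_{\Gamma_H})^2=\alpha\alpha^\sigma \xi_{\Gamma_H}^2 = (-1)(2) = -2$, where the decisive input is the identity $\alpha\alpha^\sigma=-1$ coming from the choice of $\alpha$ in \eqref{eq:alpha}. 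This is the only nontrivial computation; everything afterwards is essentially bookkeeping.

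Next, since $\alpha$ is a unit of $\W$, the map $\phi$ is clearly a $\W$-linear bijection on the underlying free modules $\W\oplus\W\xi$, so it is a ring isomorphism. Restricting to units therefore produces the asserted group isomorphism $\mathbb{S}_2(\Gamma_E)\cong \mathbb{S}_2(\Gamma_H)$. To handle the filtration, I would use that $\End_{\F_4}(\Gamma)$ is a noncommutative local ring whose unique maximal two-sided ideal is $(\xi_\Gamma)$, so that $F_{i/2}\mathbb{S}_2(\Gamma)=1+(\xi_\Gamma)^i$; since $\phi$ sends the generator $\xi_{\Gamma_E}$ to the unit multiple $\alpha\xi_{\Gamma_H}$ of $\xi_{\Gamma_H}$, it carries $(\xi_{\Gamma_E})^i$ onto $(\xi_{\Gamma_H})^i$ for each $i$, hence preserves the filtration.

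For the subgroup $K$, I note that $\phi$ sends $\alpha=\alpha+0\cdot\xi_{\Gamma_E}$ to $\alpha+0\cdot\xi_{\Gamma_H}=\alpha$, and by the previous step $\phi\bigl(F_{3/2}\mathbb{S}_2(\Gamma_E)\bigr)=F_{3/2}\mathbb{S}_2(\Gamma_H)$. Taking the closure of the subgroup generated by these two sets then yields $\phi(K(\Gamma_E))=K(\Gamma_H)$. The only real obstacle anywhere in the argument is the quadratic-relation check, and it is clean precisely because $\alpha$ was chosen so that $\alpha\alpha^\sigma=-1$; the rest of the proof is the routine verification that the prescribed formula respects the presentations and the relevant filtrations.
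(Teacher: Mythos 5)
Your proof is correct and follows essentially the same route as the paper: the paper cites [BeaudryTowards, Lemma~3.1.2] for the first claim (which your explicit verification of the two relations reproduces, using $\alpha\alpha^\sigma=-1$), and then notes that the filtration and $K$ are preserved, using $\phi(\alpha)=\alpha$ for the latter exactly as you do. The only cosmetic difference is in the filtration step, where the paper observes $\alpha\equiv 1 \bmod \xi_{\Gamma_H}^2$ while you make the cleaner observation that a ring isomorphism carrying $\xi_{\Gamma_E}$ to a unit multiple of $\xi_{\Gamma_H}$ must carry $(\xi_{\Gamma_E})^i$ onto $(\xi_{\Gamma_H})^i$.
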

\begin{proof}
The first claim is \cite[Lemma 3.1.2]{BeaudryTowards}. Since $\alpha \equiv 1 \mod \xi_H^2$, the map induces an isomorphism $F_{i/2}\mathbb{S}_2(\Gamma_E) \xrightarrow{\cong} F_{i/2}\mathbb{S}_2(\Gamma_H)$ for each $i\geq 0$. Furthermore, it maps $\alpha$ to itself, which then proves the last claim about the subgroups $K(\Gamma)$. 
\end{proof}

\begin{definition}
The group $\G_2(\Gamma)$ contains a central subgroup of order $2$ generated by the inversion $[-1](x)$ of the formal group law $\Gamma$. We denote this subgroup by $C_2 \subseteq \G_2(\Gamma)$. 
For any subgroup $G$ of $\G_2(\Gamma)$, we let 
\[PG = G/(C_2 \cap G).\]
\end{definition}

\begin{lemma}
There is an isomorphism
\[ F_{2/2}\mathbb{S}_2(\Gamma) \cong C_2 \times K(\Gamma).\]
In particular, $PF_{2/2}\mathbb{S}_2(\Gamma) \cong K(\Gamma)$.
\end{lemma}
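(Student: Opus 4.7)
The plan is to recognize $C_2$ and $K(\Gamma)$ as complementary subgroups of $F_{2/2}\mathbb{S}_2(\Gamma)$ by examining their images in the abelian quotient $F_{2/2}\mathbb{S}_2(\Gamma)/F_{3/2}\mathbb{S}_2(\Gamma) \cong \F_4$.

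First I would verify that both $C_2$ and $K(\Gamma)$ lie in $F_{2/2}\mathbb{S}_2(\Gamma)$. Since $\xi_\Gamma^2 = 2u_\Gamma$ with $u_\Gamma = \pm 1$, the condition $g \equiv 1 \mod (\xi_\Gamma^2)$ is equivalent to $g \equiv 1 \mod 2$; the generator $-1$ of $C_2$ clearly satisfies this, as does $\alpha = (1-2\omega)/\sqrt{-7}$, since $\sqrt{-7} \equiv 1 \mod 2$. Combined with $F_{3/2} \subseteq F_{2/2}$, this gives both $C_2 \subseteq F_{2/2}$ and $K(\Gamma) \subseteq F_{2/2}$.

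Next I would compute the images of $-1$ and $\alpha$ in $F_{2/2}/F_{3/2} \cong \F_4$, using that the isomorphism sends the class of $1 + a\xi_\Gamma^2$ to $a \mod 2$. Writing $-1 = 1 + \xi_\Gamma^2(-u_\Gamma^{-1})$ shows that $-1$ has image $1 \in \F_4$. For $\alpha$, the normalization $\sqrt{-7} \equiv 1 + 4 \mod 8$ forces $\sqrt{-7}^{-1} \equiv 1 \mod 4$, so $\alpha \equiv 1 - 2\omega = 1 + \xi_\Gamma^2(-\omega u_\Gamma^{-1}) \mod 4$, giving image $\omega \in \F_4$. Since $\{1, \omega\}$ is an $\F_2$-basis of $\F_4$, the images of $C_2$ and $K(\Gamma)$ lie in complementary $\F_2$-lines.

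From here the rest is formal. Because $F_{2/2}/F_{3/2}$ is abelian, the image of $K(\Gamma) = \overline{\langle \alpha, F_{3/2}\rangle}$ is the additive subgroup generated by $\omega$, namely $\{0, \omega\}$. Combined with $F_{3/2} \subseteq K(\Gamma)$ and the fact that the images of $C_2$ and $K(\Gamma)$ together span the quotient, this gives $F_{2/2} = C_2 \cdot K(\Gamma)$. A nontrivial element of $C_2 \cap K(\Gamma)$ would have to be $-1$, but its image $1 \in \F_4$ is not in $\{0, \omega\}$, so $C_2 \cap K(\Gamma) = \{1\}$. Since $C_2$ is central in $\G_2(\Gamma)$, the internal product is direct, yielding $F_{2/2}\mathbb{S}_2(\Gamma) \cong C_2 \times K(\Gamma)$; the ``in particular'' statement then follows by quotienting by $C_2$. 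I expect the mod $4$ computation of $\alpha$ to be the only nontrivial step, and it is a short calculation in $\W$.
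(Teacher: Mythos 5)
Your proof is correct and takes essentially the same route as the paper: both use the short exact sequence $1 \to F_{3/2}\mathbb{S}_2 \to F_{2/2}\mathbb{S}_2 \to \F_4 \to 1$ and check that the images of $-1$ and $\alpha$ span $\F_4$. The only difference is that you explicitly verify $C_2 \cap K(\Gamma) = \{1\}$ via the image of $-1$ (the paper takes this for granted here and records it only later in a remark), which is a worthwhile addition since it is genuinely needed for the direct product claim.
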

\begin{proof}
Certainly, $C_2 \times K(\Gamma) \subseteq F_{2/2}\mathbb{S}_2(\Gamma) $ since $-1=1+2\mod \xi_\Gamma^3$ and $\alpha \equiv 1+2\omega \mod \xi_{\Gamma}^3$. 
There is an exact sequence
\[ 1\to F_{3/2}\mathbb{S}_2(\Gamma)  \to F_{2/2}\mathbb{S}_2(\Gamma) \to \F_4  \to 1 \]
and by the identities above, the images of $-1$ and $\alpha$ generate the quotient. Therefore, the containment is an equality. 
\end{proof}

\begin{warn}
The isomorphism of \Cref{lem:isoS} is not equivariant for the action of $\Gal$ and, in fact, 
\[\G_2(\Gamma_H) \not\cong \G_2(\Gamma_E).\]
\end{warn}

In view of \Cref{lem:isoS}, we will often write $\mathbb{S}_2 =\mathbb{S}_2(\Gamma)$ and $K=K(\Gamma)$ for $\Gamma$ either one of $\Gamma_H$ and $\Gamma_E$.

\begin{rem}
The quotient map $\phi \colon \G_2(\Gamma) \to \Gal$ factors as
\[\G_2(\Gamma) \to P\G_2(\Gamma) \to \Gal \to 1\] 
and
 \[ P\G_2(\Gamma) \cong P\mathbb{S}_2(\Gamma) \rtimes \Gal\]
 \end{rem}

\begin{definition}\label{def:G21}
Let $\det \colon \mathbb{S}_2(\Gamma) \to \Z_2^{\times}$ be defined by 
\[\det(a+b\xi_\Gamma) = aa^{\sigma}-2u_{\Gamma}bb^{\sigma}\]
where $a,b \in \W$. We call this map the \emph{determinant}. 
The composition of $\det$ with the quotient $\Z_2^{\times} \rightarrow \Z_2^{\times}/(\pm 1)$ is a map
\[N \colon \mathbb{S}_2 \to \Z_2^{\times}/(\pm 1)\]
whose kernel is denoted by $\mathbb{S}_2^1(\Gamma)$.
Similarly, $\G_2^1(\Gamma)$ is defined to be the kernel of the composite
\[\xymatrix{ \G_2(\Gamma)   \cong \mathbb{S}_2(\Gamma) \rtimes \Gal \ar[rr]^-{\det \times \id} & & \Z_2^{\times} \times \Gal \to \Z_2^{\times}/(\pm 1). }\]
\end{definition}
\begin{rem}
The element $\pi$ satisfies $\det(\pi)=3$. Since $3$ is a topological generator of $\Z_2^{\times}/(\pm 1)$, the map which sends $3 \in \Z_2^{\times}/(\pm 1)$ to $\pi \in \mathbb{G}_2(\Gamma)$ determines a splitting. This gives an isomorphism
\[ \mathbb{G}_2(\Gamma) \cong \mathbb{G}_2^1(\Gamma) \rtimes  \Z_2^{\times}/(\pm 1). \]
There are also isomorphisms 
\[\G_2^1(\Gamma) \cong \mathbb{S}_2^1(\Gamma) \rtimes \Gal, \ \ \ P\G_2^1(\Gamma) \cong P\mathbb{S}_2^1(\Gamma) \rtimes \Gal.\]
Furthermore, the determinant commutes with the isomorphism of \Cref{lem:isoS}. Therefore, $\mathbb{S}_2^1(\Gamma_E) \cong \mathbb{S}_2^1(\Gamma_H)$.
\end{rem}

\begin{definition}\label{def:bigsubgroups}
For any $G \subseteq \G_2(\Gamma)$,  we define the following subgroups.
$G^1  = \G_2^1(\Gamma) \cap  G$,
and $G^0 = \mathbb{S}_2(\Gamma) \cap G$.
 Similarly, for $G \subseteq P\G_2(\Gamma)$, let
 $G^1  = P\G_2^1(\Gamma) \cap  G$ and
 $ G^0 := G \cap  P\mathbb{S}_2(\Gamma)$.
\end{definition}

\begin{rem}\label{rem:isoK}
Let $\mathbb{S}_2 =\mathbb{S}_2(\Gamma)$ and $K=K(\Gamma)$ for $\Gamma = \Gamma_H$ or $\Gamma_E$.
Note that $C_2 \cap K$ is trivial, and so $PK \cong K$. Furthermore, 
although $K$ is \emph{not} Galois invariant in $\mathbb{S}_2$, $PK$ is Galois invariant in $P\mathbb{S}_2$. 
We get isomorphisms
\[K \xrightarrow{\cong} PK \xrightarrow{\cong} PF_{2/2}\mathbb{S}_2.\]
\end{rem}

\subsection{Finite subgroups}\label{sec:finitesubgroups}
Since $\Z_2^{\times}/(\pm 1)$ is free, $\G_2^1(\Gamma)$ contains all the torsion of $ \G_2(\Gamma) $. The finite subgroups of $\G_2(\Gamma)$ are discussed at length in Section 2 of \cite{HennCent}.

There are elements $i,j,k $ of $ \mathbb{S}_2(\Gamma)$ that have order 4 and generate a quaternion subgroup $Q_8$. Explicit choices for these elements are given by
\begin{align*}
i&=\pi^{-1}(1-\varepsilon \xi_{\Gamma}) & j&=\pi^{-1}(1- \omega^2\varepsilon\xi_{\Gamma})& k&=\pi^{-1}(1- \omega\varepsilon\xi_{\Gamma})
\end{align*}
where 
\[\varepsilon = \begin{cases}\alpha & \Gamma=\Gamma_{H} \\
1 & \Gamma=\Gamma_{E} .
\end{cases}\]
The $i,j,k$ satisfy the classical quaternion relations with $ij=k$.
We note the relation
\[\omega =- \frac{1}{2}(1+i+j+k).\]
Furthermore,
\[\omega i \omega^{-1}=j, \quad \omega j\omega^{-1}=k, \quad  \omega k\omega^{-1} =i.\] 
Together, these elements thus form a binary tetrahedral group of order $24$
\[G_{24} = \{\pm 1, \pm i , \pm j, \pm k, \frac{1}{2}( \pm1\pm i \pm j\pm k)\} = \langle i, \omega \rangle.\]

As in Lemma 2.2 of \cite{HennCent}, conjugation by the element $1+i$ in $\mathbb{D}_2(\Gamma)^{\times}$ stabilizes $G_{24}$. Letting $[1+i]$ denote the image of $1+i$ in $\GG_2(\Gamma)$ (expressed as a quotient as in \eqref{eq:G2quotient}), the subgroup of $\mathbb{G}_2(\Gamma)$ generated by $G_{24}$ and $[1+i]$ is a choice of maximal finite subgroup $G_{48}(\Gamma)$. That is,
\[G_{48} = G_{48}(\Gamma) = \langle i, \omega, [1+i] \rangle \subseteq \GG_2(\Gamma).\]

We have the following unique conjugacy class of maximal finite subgroups of $ \G_2(\Gamma)$. We let
\begin{align*}
G_{48}(\Gamma) \cong \begin{cases}  GL_2(\F_3), & \Gamma = \Gamma_E \\
O_{48}, & \Gamma = \Gamma_H
\end{cases}
\quad \subseteq \G_2(\Gamma).
\end{align*}
where $O_{48}$ denotes the binary octahedral group.
When $\Gamma$ is understood from context or for results that are independent of the choice of formal group law, we simply write
\[G_{48}=G_{48}(\Gamma).\]
The intersection of $G_{48}(\Gamma)$ with $ \mathbb{S}_2(\Gamma)$ is also maximal and, for both $\Gamma_E$ and $\Gamma_H$, there is an isomorphism
\[G_{24}:= G_{48}(\Gamma) \cap \mathbb{S}_2(\Gamma) \cong Q_8 \rtimes C_3,\]
where $Q_8$ is the quaternion group and $C_3 \cong \F_4^{\times}$ is generated by our choice of a primitive third root of unity $\omega \in\W^{\times} \subseteq \mathbb{S}_2(\Gamma) $.

The subgroup of $G_{24}$ generated by $i$ is cyclic of order $4$ and so we simply denote it by $C_4$. Furthermore, 
\[C_{8}= C_8(\Gamma) = \langle i, [1+i] \rangle \subseteq G_{48}(\Gamma) \subseteq \mathbb{G}_2(\Gamma) \cong \mathbb{D}_2^{\times}/(\xi_{\Gamma}^2)\]
is cyclic of order $8$ since the identity $(1+i)^2=2i$ in $\mathbb{D}_2^\times$ implies in $\G_2(\Gamma)$ that
\begin{align*}
[1+i]^2 =\begin{cases} i & \Gamma= \Gamma_H \\
-i & \Gamma =\Gamma_E.
\end{cases}
\end{align*}

We will also use the following notation:
\begin{align*}
G_{12}=G_{12}(\Gamma) = \langle -\omega, [j-k] \rangle \cong \begin{cases}  C_2 \times \mathfrak{S}_3 & \Gamma = \Gamma_E \\
C_3 \rtimes C_4 & \Gamma = \Gamma_H
\end{cases}
\quad \subseteq G_{48}(\Gamma)
\end{align*}
where $ \mathfrak{S}_3$ is the symmetric group on 3 letters.
Here, $[j-k]$ is the image of $j-k$ in $\GG_2(\Gamma)$. That this is a subgroup of $G_{48}(\Gamma)$ follows from the relation 
\[ [j-k] =j[1+i].\] 
Note that,
\begin{equation}\label{eq:jmk}  j-k =\begin{cases} \alpha \xi_{\Gamma_H} & \Gamma =\Gamma_H \\ 
 \xi_{\Gamma_E} & \Gamma =\Gamma_E 
 \end{cases}
\end{equation}
in $\mathbb{D}_2^\times$,  so conjugation by $[j-k]$ induces the Galois action on $\W$.

The inclusion follows from the identity $j(j-k)= - (1+i)$ in $\mathbb{D}_2^{\times}$.
For both $\Gamma_E$ and $\Gamma_H$, we have
\[ G_{12}(\Gamma) \cap \mathbb{S}_2(\Gamma) \cong C_2 \times C_3 \cong C_6.\]

Finally, we let
\begin{align*}
G_{48}'&=G_{48}'(\Gamma)= \pi G_{48}'(\Gamma) \pi^{-1}\\
G_{24}'&=\pi G_{24} \pi^{-1}.
\end{align*}

\begin{warn}
The groups $G_{48}(\Gamma_H)$ and $G_{48}(\Gamma_E)$ are not isomorphic. Nor are the groups $G_{12}(\Gamma_H)$ and $G_{12}(\Gamma_E)$.
\end{warn}

\begin{rem}\label{rem:symmetricgroup}
In Lemma 2.2 of \cite{HennCent}, it is shown that there is an isomorphism
\[ PG_{48}(\Gamma) \cong \mathfrak{S}_4  ,\]
the symmetric group on four letters, independent of the choice of $\Gamma = \Gamma_E$ or $\Gamma_H$. 
 Similarly, 
\[PG_{12}(\Gamma) \cong \mathfrak{S}_3\]
independent of $\Gamma$.
\end{rem}

\subsection{Twisted modules and group rings}
We will be studying group rings and their modules with coefficients in $\W$. To avoid ambiguities, we will use $\zeta $ to denote a primitive third root of unity in the coefficients $\W$ and $\omega$ to denote the element of $\G_2(\Gamma)$, so that for the coefficients, we have
\[\W \cong \Z_2[\zeta]/(1+\zeta+\zeta^2).\]

The group $\G_2(\Gamma)$ acts on $\W$ via the quotient map
\[\phi \colon \G_2(\Gamma) \to  \Gal.\]
We write $w^g = g(w)$ for $w \in \W$. Restriction along the inclusion of any closed subgroup $G \subseteq \G_2(\Gamma)$ also gives $\W$ the structure of a $G$-module. Note that $G^0$ as defined in \Cref{def:bigsubgroups} is the kernel of $\phi$ restricted to $G$ and that $G^0$ acts trivially on $\W$. Furthermore, the factorization of $\phi$  
\[\xymatrix{\G_2 \ar[r]^-{\phi} \ar[d] & \Gal \\
P\G_2 \ar[ur] & }
\]
 also gives $\W$ the structure of a $G$-module for any subgroup $G$ of $P\G_2$.

\begin{definition}
Let $G$ be a closed subgroup of $\G_2(\Gamma)$. 
The \emph{Galois-twisted group ring} $\WG{G}$ is the profinite abelian group 
 \[\WG{G} = \lim_{i,j} (\W/p^j)_{\phi}[G_i]  \] 
  endowed with
the multiplicative structure determined by
\[(\alpha_1 g_1) (\alpha_2 g_2) = \alpha_1 \alpha_2^{g_1} g_1 g_2 , \ \ \  g_1,g_2\in G, \ \alpha_1, \alpha_2\in \W\]
where $G$ acts on $\W$ via $\phi \colon G \rightarrow \Gal$. 

A \emph{Galois-twisted} $G$-module is a profinite $\WG{G}$-module $M$ with the property that $g(\alpha m) = \alpha^{\phi(g)}g(m)$ for any $m\in M$, $\alpha \in \W$ and $g\in G$. Let $\Mod^\phi(G)$ denote the category of Galois-twisted $G$-modules, with morphism the equivariant maps.
\end{definition}

\begin{definition}\label{def:completedGset}
Let $G$ be a closed subgroup of $\G_2(\Gamma)$ or of $P\G_2(\Gamma)$.
For $X = \lim_i X_i$ a profinite $G$-set, the \emph{Galois-twisted completed $G$-module} associated to $X$ is defined as
\[\WG{X} = \lim_{i,j} (\W/p^j)[X_i]  \] 
where the $G$-module structure on $(\W/p^j)[X_i]$ satisfies $g(wx) =w^g (gx) $ for all $w\in \W/p^j$ and $x\in X_i$. 
The \emph{augmentation} is the map  
\[\varepsilon \colon \WG{X} \to \W\]
obtained from the maps
\[ \varepsilon_i \colon (\W/p^j)[X_i]  \to  (\W/p^j)  \]
that send elements $x_i \in X_i$ to $1$. We let $ I X$ denote the kernel of the augmentation.
\end{definition}

\begin{rem}
It will be useful to note that 
\[\WG{X} \cong \W \otimes_{\Z_2} \Z_2[\![X]\!] \]
where the action on the right is the diagonal action.
\end{rem}

See \cite{HennCent} for a more detailed discussion of these constructions.

\begin{definition}\label{defn:uparrownotation}
Let $G$ be a subgroup of $\GG_2$ or $P\G_2$ and $H$ be a finite subgroup of $G$. We let
\[\W {\uparrow}_{H}^{G} = \WG{G/H} \cong \WG{G} \otimes_{\W[ H ]} \W \]
where the action of $G$ and $H$ on $\W$ is Galois-twisted. Similarly, we let
\[\Z_2 {\uparrow}_{H}^{G} = \Z_2[\![G/H]\!] \cong \Z_2[\![G]\!] \otimes_{\Z_2[ H ]} \Z_2 \]
\end{definition}

\begin{lemma}\label{lem:connecting}
Let $G \subseteq \GG_2$ or $G \subseteq P\GG_2$ be a closed subgroup.
The exact sequence of $\WG{G}$-modules
\[ 0 \to IG \to \WG{G} \to \W \to 0\]
gives rise to a long exact sequence on continuous group homology whose connecting homomorphism
\[\delta \colon H_{n} (G,  \W ) \to H_{n-1}(G, IG) \] 
for $n\geq 1$ is an isomorphism. 
\end{lemma}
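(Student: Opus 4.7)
The plan is to recognise $\WG{G}$ as a $G$-acyclic module whose coinvariants match $\W$ along the augmentation; the long exact sequence in continuous group homology then forces the connecting map to be an isomorphism.

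First I would establish the key input: $H_n(G,\WG{G})=0$ for $n\geq 1$, and $H_0(G,\WG{G})=\W$ with the map induced by $\varepsilon$ being the identity. Using the remark $\WG{G}\cong \W\otimes_{\Z_2}\Z_2[\![G]\!]$ with diagonal action, the (continuous) shearing map $g\otimes w\mapsto g\otimes w^{g^{-1}}$ is an isomorphism of Galois-twisted $G$-modules from the diagonal action onto $\Z_2[\![G]\!]\otimes_{\Z_2}\W$ with $G$ acting only on the first factor. The latter is coinduced (equivalently, free as a profinite $\Z_2[\![G]\!]$-module, since $\W\cong\Z_2^{2}$), so by Shapiro's lemma (in its profinite form, as in Symonds--Weigel or Ribes--Zalesskii) its continuous homology vanishes in positive degrees and its coinvariants are $\W$. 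A direct check shows that the composite $\WG{G}\xrightarrow{\mathrm{shear}}\Z_2[\![G]\!]\otimes\W\to \W$ agrees with $\varepsilon$ modulo the $G$-action, so the induced map $H_0(G,\WG{G})\to H_0(G,\W)$ is indeed the identity of $\W$.

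Next I would feed the short exact sequence $0\to IG\to \WG{G}\to \W\to 0$ into the long exact sequence of continuous $G$-homology to get
\[
\cdots\to H_n(G,\WG{G})\to H_n(G,\W)\xrightarrow{\delta}H_{n-1}(G,IG)\to H_{n-1}(G,\WG{G})\to\cdots
\]
For $n\geq 2$, both neighbouring terms $H_n(G,\WG{G})$ and $H_{n-1}(G,\WG{G})$ vanish by the previous step, so $\delta$ is an isomorphism. For $n=1$, the tail reads
\[
0\to H_1(G,\W)\xrightarrow{\delta}H_0(G,IG)\to H_0(G,\WG{G})\xrightarrow{\varepsilon_{*}}H_0(G,\W)\to 0,
\]
and since $\varepsilon_{*}$ is an isomorphism, the connecting map $\delta$ is again an isomorphism.

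The only real obstacle is the bookkeeping in the profinite Galois-twisted setting—one must verify that shearing, the long exact sequence, and Shapiro's lemma all survive in continuous group homology. None of this is genuinely difficult: each step is the profinite analogue of a textbook argument, and references such as \cite{HennCent} (or the books of Symonds--Weigel and Ribes--Zalesskii) provide the necessary foundations. An alternative route, avoiding shearing entirely, is to observe that $\WG{G}$ is a free module of rank one over itself; together with the fact that continuous $G$-homology of Galois-twisted modules is computed by $\Tor^{\WG{G}}_{*}(\W,-)$, this gives the acyclicity of $\WG{G}$ and the identification $H_0(G,\WG{G})=\W$ for free.
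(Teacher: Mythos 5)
Your overall strategy — establish that $\WG{G}$ has vanishing higher continuous $G$-homology and then read the conclusion off the long exact sequence — is the same as the paper's. Where you differ is in how the acyclicity is obtained: the paper restricts to $G^0 = G \cap \mathbb{S}_2(\Gamma)$ (where the Galois twist disappears and $\WG{G}$ restricts to a free $\W[\![G^0]\!]$-module) and then runs the Lyndon--Hochschild--Serre spectral sequence for $G^0 \leq G$; you shear to a coinduced module. Both give the vanishing in positive degrees, which by itself already settles $n\geq 2$. The ``alternative route'' you sketch at the end — $\WG{G}$ is free of rank one over itself, and homology in the Galois-twisted category is $\Tor^{\WG{G}}_*(\W,-)$ — is actually cleaner than either and is worth promoting to the main argument; it also avoids the small technical wrinkle that the shear $g\otimes w\mapsto g\otimes w^{g^{-1}}$, while $\Z_2[\![G]\!]$-linear, does not commute with the ambient $\W$-module structure, so calling it an isomorphism of Galois-twisted modules requires redefining the $\W$-action on the target.

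The place where your write-up genuinely needs tightening is the $n=1$ case. For $\delta$ to be an isomorphism there you need $\varepsilon_* \colon H_0(G,\WG{G}) \to H_0(G,\W)$ to be injective, and your claim that ``the induced map is the identity of $\W$'' silently assumes $H_0(G,\W)=\W$. That is correct in the $\Tor^{\WG{G}}$ formulation, and also when $G=G^0$ (the only case in which the lemma is actually invoked later, for $G = PK^1$). But if one reads ``continuous group homology'' as ordinary $\Tor^{\Z_2[\![G]\!]}_*(\Z_2,-)$ and takes $G\neq G^0$, then $(\W)_G = (\Z_2[\Gal])_{\Gal}\cong\Z_2$, whereas the shear computes $(\WG{G})_G\cong\W$, so $\varepsilon_*$ fails to be injective. (Note also that the paper's own displayed computation asserts $H_0(G,\WG{G})\cong\Z_2$ when $G\neq G^0$, which does not match what the shear gives; you are implicitly disagreeing with the paper here and should say so.) So: either state up front that you are computing $\Tor^{\WG{G}}_*(\W,-)$, which makes the statement correct for all closed $G$ and makes your final paragraph the whole proof, or else restrict explicitly to $G=G^0$, which is all the paper ever uses.
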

\begin{proof}
This is a consequence of the fact that
\[H_*(G, \WG{G}) \cong \begin{cases}  0 & *>0 \\
\W & G=G^0\\
\Z_2 & G\neq G^0.
\end{cases}\]
Indeed, since $G^0$ acts trivially on $\W$, $H_*(G^0, \W[\![G]\!]) \cong \W$ concentrated in homological degree zero. If $G=G^0$, this proves the claim, otherwise, note that $\W \cong \Z_2[\Gal]$ as a $\Gal$-module, and then the result follows easily using the Lyndon--Hochschild--Serre for the group extension $G^0 \subseteq G$. 
\end{proof}

\begin{rem}\label{rem:connecting}
Suppose that $G=G^0$. The connecting homomorphism  $\delta$ of \Cref{lem:connecting} when $n=1$ can be described as follows. Since $G$ acts trivially on $\W$ and $\W$ is of finite rank over $\Z_2$, we can write $\delta=\delta' \otimes_{\Z_2}\W$ where $\delta'$ is the connecting homomorphism in homology corresponding to the exact sequence 
\[ 0 \to I_{\Z_2}G \to \ZG{G} \to \Z_2 \to 0.\]
Identify
\[H_1(G,  \Z_2) \cong G/\overline{[G,G]}, \ \ \ \ \ \ H_0(G, I G) \cong  I_{\Z_2} G/\overline{(I_{\Z_2}G)^2}\]
and let $\overline{g}$ denote its residue class in $G/\overline{[G,G]} $.
Then, under these identifications, 
\[\delta'(\overline{g}) = e-g  \mod \overline{(I_{\Z_2}G)^2} .\]
Here, $\overline{[G,G]} $ and $\overline{(I_{\Z_2}G)^2}$ denote the closures of the corresponding subgroups. See \cite[Lemma A.1.4]{BeaudryRes} for more details.
\end{rem}

\subsection{The Poincar\'e duality subgroup}
Now, we record some facts about the group $K$ which we will need below. From Proposition 2.5.1 of \cite{BeaudryRes}, we have:
\begin{lemma}
The subgroup $K(\Gamma)$ is normal in $\mathbb{S}_2(\Gamma)$ and $\mathbb{S}_2(\Gamma) \cong K(\Gamma)\rtimes G_{24}$. Similarly, $K^1(\Gamma)$ is normal in $\mathbb{S}_2^1(\Gamma)$ and $\mathbb{S}_2^1(\Gamma) \cong K^1(\Gamma)\rtimes G_{24}$.
\end{lemma}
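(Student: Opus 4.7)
The plan is to verify the three standard conditions for a semidirect product decomposition: $K$ is normal in $\mathbb{S}_2$, $K \cap G_{24} = \{1\}$, and $K \cdot G_{24} = \mathbb{S}_2$. The companion statement for $K^1 \subseteq \mathbb{S}_2^1$ will then follow by restriction, using that $G_{24}$ consists of torsion elements and $\mathbb{S}_2/\mathbb{S}_2^1 \cong \Z_2^{\times}/\{\pm 1\}$ is torsion-free, so that $G_{24} \subseteq \mathbb{S}_2^1$ and the norm-one condition on a product $kh$ with $k \in K$, $h \in G_{24}$ forces $k \in K^1$.

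For normality of $K$ in $\mathbb{S}_2$, I would first observe that each filtration subgroup $F_{i/2}\mathbb{S}_2$ is normal in $\mathbb{S}_2$, since the defining congruence $g \equiv 1 \pmod{\xi_\Gamma^i}$ is preserved by conjugation via the commutation relation $\xi_\Gamma w = w^\sigma \xi_\Gamma$. In particular $F_{3/2}\mathbb{S}_2 \triangleleft \mathbb{S}_2$, so it suffices to check that conjugates of $\alpha$ by a set of generators of $\mathbb{S}_2/F_{3/2}\mathbb{S}_2$ lie in $K$ modulo $F_{3/2}\mathbb{S}_2$. Conjugation by $\omega$ fixes $\alpha$ because $\W$ is commutative; conjugation by any element of $F_{2/2}\mathbb{S}_2$ preserves $K$ because $F_{2/2}\mathbb{S}_2 = C_2 \times K$ by the preceding lemma, with $C_2$ central. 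For the remaining generators $1 + a\xi_\Gamma$ with $a \in \W$, a direct computation using $\xi_\Gamma a = a^\sigma \xi_\Gamma$ yields
\[
(1 + a\xi_\Gamma)\alpha(1 + a\xi_\Gamma)^{-1} \equiv \alpha + a(\alpha^\sigma - \alpha)\xi_\Gamma \pmod{\xi_\Gamma^2}.
\]
Since $\alpha \equiv 1 \pmod 2$, we have $\alpha^\sigma - \alpha \in 2\W$, and combined with the identity $2 = u_\Gamma^{-1}\xi_\Gamma^2$, the error term actually lies in the ideal $(\xi_\Gamma^3)$. Hence $(1 + a\xi_\Gamma)\alpha(1 + a\xi_\Gamma)^{-1} \in \alpha \cdot F_{3/2}\mathbb{S}_2 \subseteq K$, which closes the normality argument.

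For the trivial intersection, I would argue $K \subseteq F_{2/2}\mathbb{S}_2$ (since $\alpha \equiv 1 \pmod{\xi_\Gamma^2}$), and then compute $G_{24} \cap F_{2/2}\mathbb{S}_2 = \{\pm 1\}$ from the explicit generators: the quaternion elements $\pm i, \pm j, \pm k$ reduce to $\pm(1 - \varepsilon\xi_\Gamma)$ modulo $\xi_\Gamma^2$ and so are not in $F_{2/2}\mathbb{S}_2$, while the combinations $\tfrac{1}{2}(\pm 1 \pm i \pm j \pm k)$ reduce to nontrivial cube roots of unity in $\F_4^{\times}$ and so are not even in $S_2 = F_{1/2}\mathbb{S}_2$. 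Combined with $C_2 \cap K = \{1\}$ from the previous lemma, this gives $G_{24} \cap K = \{1\}$.

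Finally, a cardinality count over the filtration quotients yields
\[
|\mathbb{S}_2/K| = |\mathbb{S}_2/S_2| \cdot |S_2/F_{2/2}\mathbb{S}_2| \cdot |F_{2/2}\mathbb{S}_2/K| = 3 \cdot 4 \cdot 2 = 24 = |G_{24}|,
\]
so the composite $G_{24} \hookrightarrow \mathbb{S}_2 \twoheadrightarrow \mathbb{S}_2/K$ is injective and hence bijective by finiteness, producing the semidirect product decomposition $\mathbb{S}_2 \cong K \rtimes G_{24}$. The main technical obstacle is the computation in the normality step verifying that the first-order error $a(\alpha^\sigma - \alpha)\xi_\Gamma$ has $\xi_\Gamma$-valuation at least $3$; once that is in hand, everything else is bookkeeping with the filtration and with the structure theorem $F_{2/2}\mathbb{S}_2 = C_2 \times K$.
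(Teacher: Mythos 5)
Your overall strategy is correct and self-contained, whereas the paper itself simply cites \cite[Prop.~2.5.1]{BeaudryRes}; you are reproving the result from scratch. The intersection argument, the cardinality count $|\mathbb{S}_2/K| = 3\cdot 4\cdot 2 = 24 = |G_{24}|$, and the deduction of the $K^1$ statement from the $K$ statement (using that $\Z_2^\times/\{\pm 1\}$ is torsion-free so $G_{24}\subseteq\mathbb{S}_2^1$, and that $N(kh)=N(k)$ for $h\in G_{24}$) are all correct.

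However, the normality step has a gap. You compute $(1+a\xi_{\Gamma})\alpha(1+a\xi_{\Gamma})^{-1}$ only modulo $(\xi_{\Gamma}^2)$, observe that the linear term $a(\alpha^\sigma-\alpha)\xi_{\Gamma}$ already lies in $(\xi_{\Gamma}^3)$, and conclude the conjugate lies in $\alpha\cdot F_{3/2}\mathbb{S}_2$. That conclusion does not follow from a congruence modulo $(\xi_{\Gamma}^2)$: the unaccounted error lives in $(\xi_{\Gamma}^2) = 2\W + 2\W\xi_{\Gamma}$, and the scalar part $2\W$ is strictly larger than $(\xi_{\Gamma}^3)\cap\W = 4\W$, so a priori the conjugate could fail to lie in $\alpha F_{3/2}\mathbb{S}_2$. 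You need to carry the expansion one step further. Working modulo $(\xi_{\Gamma}^3)$, one finds
\[
(1+a\xi_{\Gamma})\,\alpha\,(1+a\xi_{\Gamma})^{-1}
\equiv \alpha + a(\alpha^\sigma-\alpha)\xi_{\Gamma} + 2u_{\Gamma}\, a a^\sigma(\alpha-\alpha^\sigma)
\pmod{\xi_{\Gamma}^3},
\]
and since $\alpha - \alpha^\sigma\in 2\W$ the scalar correction $2u_{\Gamma} a a^\sigma(\alpha-\alpha^\sigma)$ lies in $4\W\subseteq(\xi_{\Gamma}^3)$, while the linear term lies in $2\W\xi_{\Gamma}\subseteq(\xi_{\Gamma}^3)$ as you observed. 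Hence the conjugate is congruent to $\alpha$ modulo $(\xi_{\Gamma}^3)$, which is what the argument actually needs. With this amendment your proof is complete and gives a valid alternative to the paper's citation.
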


\begin{rem}\label{warn:KnotGal}
The group $K(\Gamma)$ is \emph{not normal} in $\mathbb{G}_2(\Gamma)$. Although $F_{3/2}\mathbb{S}_2$ is normal, $\alpha^{\sigma} = -\alpha^{-1}$ and $-1\not\in K$. However, the group $PK(\Gamma) \cong F_{2/2}\mathbb{S}_2$ is normal in $P\mathbb{G}_2(\Gamma)$, and so is $PK^1$ in $P\G_2^1$. Furthermore, we have
\[ P\mathbb{G}_2(\Gamma) \cong PK(\Gamma) \rtimes PG_{48}, \ \ \ P\mathbb{G}_2^1(\Gamma) \cong PK^1(\Gamma) \rtimes PG_{48} .\]
\end{rem}

In our arguments below, we will use the following result which is Lemma 2.5.11 of \cite{BeaudryRes} together with the isomorphism $PK \cong K$, the fact that $K$ acts trivially on $\W$ and that $\W$ is of finite rank over $\Z_2$.
In order to state it, we define
\begin{definition}\label{def:alphatau}
For $\tau=i,j,k$ as above and $\alpha$ as in \eqref{eq:alpha}, let
\[\alpha_{\tau} := [\tau, \alpha]= \tau\alpha\tau^{-1} \alpha^{-1}.\] 
\end{definition}

For the next statements, note that there is an exact sequence
\begin{align}\label{eq:PKPK1}
1 \to PK^1 \to PK \to PK/PK^1\cong \Z_2 \to 1
\end{align}
and that the element $\alpha\pi \in F_{4/2}PK$ maps to a generator of the quotient, and so defines a splitting. Note that the element $\pi$ also maps to a generator, but when considering residual actions on homology below, we will find that $\alpha \pi$ is a better choice for the splitting.

We recall some results about the groups $PK$ and $PK^1$.
\begin{lemma}[Cor 2.5.12 of \cite{BeaudryRes}]\label{lem:PD}
The group $PK^1$ is an orientable Poincar\'e duality group of dimension $3$ and $PK$ of dimension $4$. In particular,
\[H^n(PK^1, \WG{PK^1}) \cong  \begin{cases}\W & n=3 \\
0 & \text{otherwise}
\end{cases} \]
and
\[H^n(PK, \WG{PK}) \cong  \begin{cases}\W & n=4 \\
0 & \text{otherwise}.
\end{cases} \]
\end{lemma}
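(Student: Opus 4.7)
The plan is to identify $PK$ and $PK^1$ as torsion-free compact $2$-adic analytic groups and invoke the Lazard--Serre Poincar\'e duality theorem for such groups.

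First, I would recall that $\mathbb{S}_2$ is a compact $2$-adic analytic group of dimension $n^2 = 4$, with Lie algebra $\mathbb{D}_2 \otimes \Q_2$. The open subgroup $F_{2/2}\mathbb{S}_2 \cong C_2 \times K$ therefore has the same analytic dimension, and passing to $P(-)$ kills the only torsion factor $C_2$, yielding $PK \cong K \cong PF_{2/2}\mathbb{S}_2$. The higher terms $F_{i/2}\mathbb{S}_2$ for $i \geq 3$ are torsion-free pro-$2$ groups via the logarithm, which together with the triviality of $PK \cap C_2$ forces $PK$ itself to be torsion-free. The Lazard--Serre theorem then gives that $PK$ is a Poincar\'e duality group of dimension $4$.

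For orientability, the dualizing character $\chi \colon PK \to \Z_2^\times$ is the determinant of the adjoint action of $PK$ on its Lie algebra inside $\mathbb{D}_2 \otimes \Q_2$. Since conjugation by units of $\mathbb{D}_2$ preserves the reduced norm, the adjoint representation lies in $SL$ of the Lie algebra and $\chi$ is trivial. This yields $H^n(PK, \Z_2[\![PK]\!]) \cong \Z_2$ concentrated in degree $4$ with trivial $PK$-action. Since $PK \subseteq P\mathbb{S}_2$ lies in the kernel of $\phi\colon P\G_2 \to \Gal$, the action of $PK$ on $\W$ is trivial, so $\WG{PK} \cong \W \otimes_{\Z_2} \Z_2[\![PK]\!]$ as $PK$-modules; flat base change along $\Z_2 \to \W$ gives the stated value $\W$ in degree $4$ and zero elsewhere.

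For $PK^1$, the exact sequence \eqref{eq:PKPK1}
\[ 1 \to PK^1 \to PK \to \Z_2 \to 1 \]
exhibits $PK^1$ as a closed normal subgroup of $PK$ with $1$-dimensional analytic quotient. Hence $PK^1$ is itself torsion-free and $2$-adic analytic of dimension $3$, and the identical argument (Lazard--Serre for the intrinsic cohomology, plus reduced-norm triviality of the orientation character and triviality of the $\W$-action) yields the claimed cohomology of $PK^1$.

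The main obstacle is the orientability step: verifying that the adjoint determinant $\chi$ is trivial on $PK$ (and on $PK^1$). The reduced-norm argument above is the cleanest route; an alternative is to show directly that the abelianizations of $PK$ and $PK^1$ admit no nontrivial continuous character to $\Z_2^\times$ that could realize a nontrivial orientation, using the explicit generators $\alpha$, $\alpha \pi$, and commutator elements $\alpha_\tau$ from \cref{def:alphatau}.
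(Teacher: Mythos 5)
The paper does not give its own proof of this lemma: it is stated with the bracket ``[Cor 2.5.12 of \cite{BeaudryRes}]'' and the text immediately preceding it makes clear that the result is imported verbatim from the cited reference (together with $PK\cong K$, triviality of the $K$-action on $\W$, and base change from $\Z_2$ to $\W$). So strictly speaking there is nothing in the present paper for your argument to match; what you have written is an independent proof, and it follows the same Lazard--Serre route that the reference itself uses.

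Your argument is essentially sound. A few remarks. The step where you conclude $PK$ is torsion-free is stated more briefly than it needs to be: triviality of $PK\cap C_2$ by itself does not rule out torsion in $PK$. What actually works is either of the following. (i) Use the extension $1\to F_{3/2}\SS_2\to K\to K/F_{3/2}\SS_2\cong\Z_2\to 1$: the kernel is torsion-free (your logarithm argument, which does require depth $\geq 3$ at $p=2$ as you say, since $\xi^2=2u$ so $\xi$-depth $3$ corresponds to $2$-adic valuation $>1$), and the quotient $\Z_2$ is torsion-free, so $K$ is torsion-free. (ii) Use that $K$ is normal in $\SS_2$ with $\SS_2\cong K\rtimes G_{24}$, so any torsion element of $K$ is simultaneously conjugate into $G_{24}$ and contained in the normal subgroup $K$, hence lies in $K\cap G_{24}=\{1\}$. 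Either of these is airtight; as written, the sentence ``together with the triviality of $PK\cap C_2$ forces $PK$ itself to be torsion-free'' is not a complete logical step. The orientability argument via $\det\circ\mathrm{Ad}$ being trivial on the quaternion Lie algebra (and on its reduced-trace-zero subalgebra for $PK^1$) is the standard one and is correct, and your final base-change observation $\WG{PK}\cong\W\otimes_{\Z_2}\Z_2[\![PK]\!]$ with trivial action on $\W$ is exactly what the paper uses to upgrade the coefficients from $\Z_2$ to $\W$.

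In summary: the paper cites rather than proves, your approach coincides with the one behind the cited reference, and your proof is correct modulo tightening the torsion-freeness step as above.
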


We end with the following lemma, which is a slight improvement on results of \cite{BeaudryRes}. For its statement, recall from \cref{rem:connecting} that $\overline{g}$ denotes the residue class of $g\in G$ in $G/\overline{[G,G]}$.
\begin{lemma}\label{lem:H1KZ2}
There is an isomorphism
\[H_n(PK^1 , \W) \cong \begin{cases}  \W  &  n=0,3\\
 \W/4\{\overline{\alpha} \} \oplus \W/2\{\overline{\alpha}_i, \overline{\alpha}_j\} & n=1 \\
 0 & n=2 \ \text{or} \ n>3.
\end{cases}. \]
The conjugation action of $PQ_8$ on $H_1(PK^1,\W)$
is given by
\begin{align*}
i_*(\overline{\alpha}) &=  \overline{\alpha} + \overline{\alpha}_i & j_*(\overline{\alpha}) &=  \overline{\alpha}+ \overline{\alpha}_j\\
i_*(\overline{\alpha}_i) &=  \overline{\alpha}_i & j_*(\overline{\alpha}_i) &=  \overline{\alpha}_i + 2\overline{\alpha}\\
i_*(\overline{\alpha}_j) &=  \overline{\alpha}_j + 2\overline{\alpha} & j_*(\overline{\alpha}_j) &=  \overline{\alpha}_j,
\end{align*}
In particular, $H_1(PK^1, \W)$ is generated by the image of $\alpha$ as a $\WSG{PG_{48}}$-module.

The conjugation action of $PK/PK^1$ on $H_n(PK^1;\W)$ is trivial and there is an isomorphism
\[H_n(PK , \W) \cong 
H_{n}(PK^1;\W)\oplus H_{n-1}(PK^1;\W).\]
In particular,
\[H_1(PK , \W) \cong  H_1(PK, \Z_2) \otimes_{\Z_2} \W \cong  \W/4\{\overline{\alpha} \} \oplus \W/2\{\overline{\alpha}_i, \overline{\alpha}_j\} \oplus \W\{\overline{\alpha\pi} \}.\]
The  conjugation action of $PQ_8$ on $H_1(PK,\W)$ is trivial on $\overline{\alpha\pi}$ and as in \cref{lem:H1KZ2} on the other generators.
\end{lemma}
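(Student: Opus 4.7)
The first step is to pass to $\Z_2$-coefficients. Since $PK^1 \subseteq P\SS_2^1$ lies in the kernel of $\phi\colon P\G_2 \to \Gal$, it acts trivially on $\W$. As $\W$ is $\Z_2$-free of finite rank, the universal coefficient theorem yields $H_n(PK^1, \W) \cong H_n(PK^1, \Z_2) \otimes_{\Z_2} \W$. The $\Z_2$-coefficient version of the $H_1$-computation, together with the identification of the generators $\overline\alpha, \overline{\alpha_i}, \overline{\alpha_j}$, is essentially contained in \cite{BeaudryRes}; the slight improvement alluded to in the lemma is the passage to $\W$-coefficients and the refinement of the action formulas. The vanishing $H_2(PK^1, \W) = 0$ and $H_3(PK^1, \W) \cong \W$ follow from Poincar\'e duality \cref{lem:PD}: since $PK^1$ is an orientable Poincar\'e duality group of dimension $3$ with trivial dualizing module, $H_{3-n}(PK^1, \W) \cong H^n(PK^1, \W)$. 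The case $n=0$ yields $\W^{PK^1} = \W$, and $H^1(PK^1, \W) = 0$ is computed via UCT from the fact that $H_1(PK^1, \W)$ is pure torsion and $H_0(PK^1, \W) = \W$ is free.

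For the $PQ_8$-action on $H_1(PK^1, \W)$, I would use that conjugation descends to the abelianization tensored with $\W$ and rewrite $\tau g \tau^{-1} = [\tau, g] \cdot g$, so $\tau_*(\overline g) = \overline{[\tau, g]} + \overline g$. The identity $[i, \alpha] = \alpha_i$ (by definition) produces $i_*(\overline\alpha) = \overline\alpha + \overline{\alpha_i}$, and symmetrically for $j$. For $i_*(\overline{\alpha_i})$, one computes $i \alpha_i i^{-1} = \alpha_i^{-1}$ using $i^2 = -1$ centrally together with $i\alpha i^{-1} = \alpha_i \alpha$, so $[i, \alpha_i] = \alpha_i^{-2}$, which vanishes in $\W/2\{\overline{\alpha_i}\}$. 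The cross-terms $j_*(\overline{\alpha_i}) = \overline{\alpha_i} + 2\overline\alpha$ and $i_*(\overline{\alpha_j}) = \overline{\alpha_j} + 2\overline\alpha$ come from iterated commutator expansions of $[j, \alpha_i]$ and $[i, \alpha_j]$; the filtration inclusion $[F_{1/2}, F_{3/2}] \subseteq F_{4/2}$ places the leading term in $F_{4/2}\mathbb{S}_2$, whose image under the identification of the $\W/4\{\overline\alpha\}$-summand of the abelianization picks up the factor of $2$. Generation of $H_1(PK^1, \W)$ as a $\WSG{PG_{48}}$-module by $\overline\alpha$ is then immediate: $i_*(\overline\alpha) - \overline\alpha = \overline{\alpha_i}$ and $j_*(\overline\alpha) - \overline\alpha = \overline{\alpha_j}$.

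For the passage to $PK$, I would apply the Lyndon--Hochschild--Serre spectral sequence to the split extension \eqref{eq:PKPK1} with splitting $\alpha\pi$. Since $PK/PK^1 \cong \Z_2$ is pro-$2$ procyclic and so has cohomological dimension $1$, the $E^2$-page is concentrated in columns $p = 0, 1$, all differentials vanish, and we obtain short exact sequences
\[ 0 \to H_0(\Z_2, H_n(PK^1, \W)) \to H_n(PK, \W) \to H_1(\Z_2, H_{n-1}(PK^1, \W)) \to 0. \]
Triviality of the conjugation action of $\alpha\pi$ on $H_q(PK^1, \W)$ is verified by direct computation on the generators: the corrections $[\alpha\pi, \alpha], [\alpha\pi, \alpha_i], [\alpha\pi, \alpha_j]$ all lie in $F_{\geq 5/2}\mathbb{S}_2$ and hence vanish modulo the torsion orders $4$ and $2$ of the respective summands. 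Once triviality is established, $H_0(\Z_2, M) = M = H_1(\Z_2, M)$ for each summand, yielding the additive formula. The short exact sequences split because the splitting of \eqref{eq:PKPK1} by $\alpha\pi$ lifts to the spectral sequence filtration, and $\overline{\alpha\pi}$ is the explicit generator of the $H_1(\Z_2, \W)$-piece. The $PQ_8$-fixedness of $\overline{\alpha\pi}$ follows because conjugation by $i, j$ alters $\alpha\pi$ only by an element of $PK^1$, whose class projects trivially onto the $\W\{\overline{\alpha\pi}\}$-summand. The main obstacle will be the filtration and commutator bookkeeping needed to pin down the cross-term coefficients of $2\overline\alpha$ and to check the triviality of the $\alpha\pi$-conjugation on each generator.
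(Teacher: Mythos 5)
Your proposal follows essentially the same strategy as the paper for the $PK^1$-statements (cite \cite{BeaudryRes} for the $H_1$-computation and the $PQ_8$-action, use Poincar\'e duality for $H_2$ and $H_3$, identify generators via commutators), and the overall Lyndon--Hochschild--Serre framework for the passage to $PK$ is also the same.

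However, there is a genuine gap. You claim that ``triviality of the conjugation action of $\alpha\pi$ on $H_q(PK^1, \W)$ is verified by direct computation on the generators,'' but the commutator argument you invoke (corrections $[\alpha\pi, \alpha]$, $[\alpha\pi, \alpha_i]$, $[\alpha\pi, \alpha_j]$ lying in deep filtration) only addresses $H_1$. You give no argument for the triviality of the $PK/PK^1$-action on $H_3(PK^1;\W) \cong \W$, and the filtration argument does not obviously transfer: there are no explicit small ``generators'' of $H_3$ to run a commutator estimate on. The paper handles this with a separate, slicker argument: $PK$ is an orientable Poincar\'e duality group of dimension $4$ (\cite[Cor.\ 2.5.12]{BeaudryRes}), hence $H_4(PK;\W) \cong \W$; but via the collapsing spectral sequence $H_4(PK;\W) \cong H_1(PK/PK^1, H_3(PK^1;\W))$, which is the invariants of $\Z_2$ acting on a free $\W$-module of rank one. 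A nontrivial action (multiplication by a unit $u \neq 1$) would force these invariants to vanish since $\W$ is a domain, contradicting orientability. You would need to supply this step (or an equivalent one) for your proof to go through. A minor additional point: your worry about whether the short exact sequences from the spectral sequence split is moot, since in each total degree one of the two outer terms is either zero or $\W$-free, so the splitting is automatic without appealing to any compatibility of the section $\alpha\pi$ with the filtration.
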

\begin{proof}
The claims about $PK^1$ are immediate consequences of \cite{BeaudryRes}. Specifically, Lem. 2.5.11 and Cor. 2.5.14 of  \cite{BeaudryRes}.

That the action of $PK/PK_1$ is trivial on the homology of $PK^1$ is clear for $H_1$ since $\alpha\pi \in F_{4/2}PK$ and the relevant commutators are in too deep a filtration. The action on $H_3(PK^1;\W)$ is also trivial since $PK$ is an orientable Poincar\'e duality group of dimension $4$ (\cite[Cor.  2.5.12]{BeaudryRes}) and a non-trivial action would imply that $H_4(PK;\W) = 0$ when this group must be $\W$. Finally, the Lyndon--Serre--Hochschild spectral sequence for the extension \eqref{eq:PKPK1} collapses at $E_2$ for degree reasons. 
\end{proof}


\section{Algebraic duality resolution}\label{sec:algres}
In this section, we will let $\Gamma$ be either of $\Gamma_H$ or $\Gamma_E$. We let $\GG_2 =\GG_2(\Gamma)$ and
for any subgroup of $G\subset \GG_2(\Gamma)$, we will let $G=G(\Gamma)$ as well. We will use  \cite{BeaudryRes}. While that paper is focused on $\Gamma= \Gamma_H$, since $\SS_2(\Gamma_H) \cong \SS_2(\Gamma_E)$, all of \cite{BeaudryRes}  can be adapted directly to the case of $\Gamma_E$ and we do this implicitly. Furthermore, the construction in \cite{BeaudryRes} could have been done for $P\SS_2^1$ as well and we use this implicitly as well.  The following result will be proved in this section. It is a consequence of  Theorem 1.2.1 of \cite{BeaudryRes}. We will prove it later in this section.

\subsection{Summary of Results}

Recall that $S_2^1 = S_2 \cap \mathbb{G}_2^1$ where $S_2$ consists of the elements of $\mathbb{S}_2$ that are congruent to $1$ modulo $\xi_\Gamma$. 
Let
\begin{equation}\label{defn:idealJ}
\Jideal=(2, I  K^1 \cdot I  S_2^1,  I  S_2^1 \cdot I  K^1 ). 
\end{equation}
\begin{theorem}\label{thm:classicdualres}
Let $\Z_2$ be the trivial $\Z_2[\![\Sn_{2}^1]\!]$--module. There is an exact sequence of complete left $\Z_2[\![\Sn_{2}^1]\!]$--modules
\begin{align*}
0 \rightarrow   \Z_2{\uparrow}^{\Sn_{2}^1}_{G_{24}'}  \xra{\partial_3}  \Z_2 {\uparrow}^{\Sn_{2}^1}_{C_{6}}  \xra{\partial_2} \Z_2 {\uparrow}^{\Sn_{2}^1}_{C_{6}}  \xra{\partial_1}  \Z_2 {\uparrow}^{\Sn_{2}^1}_{G_{24}} \xra{\varepsilon} \Z_2 \rightarrow 0.
\end{align*}
Let $e$ be the unit in $\Z_2[\![\Sn_{2}^1]\!]$ and $e_0$ be the resulting generator of $  \Z_2 {\uparrow}^{\Sn_{2}^1}_{G_{24}}$, $e_1$ of the first $\Z_2 {\uparrow}^{\Sn_{2}^1}_{C_{6}} $ and so on.
The maps $\partial_p$ can be chosen so that
\[\partial_p(e_p) = \delta_pe_{p-1} \]
for $\delta_1,\delta_2,\delta_3 \in \Z_2[\![\SS_2^1]\!]$ which satisfy
\begin{align*}
\delta_1e_0  &=  (e-\alpha)e_0\\
\delta_2e_1 &\equiv  (\alpha+i+j+k)e_1 \mod \Jideal\\
\delta_3e_2&= \pi(e+i+j+k)(e-\alpha^{-1})\pi^{-1}e_2.
\end{align*}
Similarly, there is a resolution of $\Z_2[\![P\Sn_{2}^1]\!]$
\[0 \rightarrow   \Z_2{\uparrow}^{P\Sn_{2}^1}_{PG_{24}'}  \xra{\partial_3}  \Z_2 {\uparrow}^{P\Sn_{2}^1}_{PC_{6}}  \xra{\partial_2} \Z_2 {\uparrow}^{P\Sn_{2}^1}_{PC_{6}}  \xra{\partial_1}  \Z_2 {\uparrow}^{P\Sn_{2}^1}_{PG_{24}} \xra{\varepsilon} \Z_2 \rightarrow 0,
\]
with the maps described as above.
\end{theorem}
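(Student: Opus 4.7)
The plan is to deduce both statements from Theorem 1.2.1 of \cite{BeaudryRes}. The first half of the theorem, concerning $\Sn_2^1$, is essentially that result. In the case $\Gamma = \Gamma_H$ it is literally Theorem 1.2.1 of \cite{BeaudryRes}. In the case $\Gamma = \Gamma_E$, I would apply the isomorphism $\Sn_2(\Gamma_E) \cong \Sn_2(\Gamma_H)$ of \cref{lem:isoS} and transport the resolution along it. This works because the isomorphism is compatible with the determinant (so it restricts to $\Sn_2^1(\Gamma_E) \cong \Sn_2^1(\Gamma_H)$), it preserves the filtration $F_{i/2}$, it sends $\alpha$ to $\alpha$, and it carries $i, j, k, \omega \in \Sn_2(\Gamma_E)$ to the analogous elements of $\Sn_2(\Gamma_H)$ (absorbing the factor $\alpha$ that appears in the definition of $i, j, k$ for $\Gamma_H$). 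Consequently the finite subgroups $G_{24}, C_6, G_{24}'$ are identified across the two models, and the formulas for $\delta_1, \delta_2, \delta_3$ translate without change.

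For the $P\Sn_2^1$ version, the plan is to rerun the construction of \cite{BeaudryRes} verbatim, with the pair $(\Sn_2^1, K^1)$ replaced throughout by $(P\Sn_2^1, PK^1)$. The three inputs that argument consumes are: that $K^1$ is an orientable Poincar\'e duality group of dimension $3$, which produces a four-term complex of projective $\Z_2[\![\Sn_2^1]\!]$-modules whose homology is concentrated in a single degree; the explicit structure of $H_1(K^1;\Z_2)$ as a $Q_8$-module, which pins down the top differential; and a normal form argument, working modulo an ideal of the form $\Jideal$, that identifies the remaining differentials. The projective counterparts of these inputs are already in hand: $PK^1$ is an orientable Poincar\'e duality group of dimension $3$ by \cref{lem:PD}, and $H_1(PK^1;\W)$ carries $PQ_8$-generators $\overline{\alpha}, \overline{\alpha}_i, \overline{\alpha}_j$ with the corresponding conjugation action by \cref{lem:H1KZ2}. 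Making these substitutions produces the asserted resolution of the trivial $\Z_2[\![P\Sn_2^1]\!]$-module with the same explicit formulas for the $\delta_p$.

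The main obstacle I anticipate is ensuring that the normal form step truly descends, i.e.\ that no new computation is required beyond the inputs recorded in \cref{lem:PD,lem:H1KZ2}. The key observation that makes this a bookkeeping exercise rather than a reconstruction is that the kernel $C_2 \cap \Sn_2^1$ is central in $\Sn_2^1$ and is contained in each of the finite subgroups $G_{24}, C_6, G_{24}'$ that label the terms in the resolution. Therefore passage to $P\Sn_2^1$ does not disturb the indexing of generators $e_0, \ldots, e_3$, and the projective analogue of the ideal $\Jideal = (2, IK^1 \cdot IS_2^1, IS_2^1 \cdot IK^1)$ is defined by the same formula with $K^1$ and $S_2^1$ replaced by $PK^1$ and $PS_2^1$. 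Once these identifications are in place, the exact same calculations that establish the $\Sn_2^1$ resolution in \cite{BeaudryRes} apply unchanged to give the $P\Sn_2^1$ version.
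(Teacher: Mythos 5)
Your plan---cite Theorem 1.2.1 of \cite{BeaudryRes} for the $\Sn_2^1$ case over $\Gamma_H$, transport to $\Gamma_E$ along the isomorphism of \cref{lem:isoS}, and rerun the construction of \cite{BeaudryRes} with $(P\Sn_2^1, PK^1)$ in place of $(\Sn_2^1, K^1)$---matches what the paper does. The paper says this result ``is a consequence of Theorem 1.2.1 of \cite{BeaudryRes}'', that ``all of \cite{BeaudryRes} can be adapted directly to the case of $\Gamma_E$'', and that ``the construction in \cite{BeaudryRes} could have been done for $P\SS_2^1$ as well''. Your observation that $C_2 \cap \Sn_2^1$ is central and lies in each of $G_{24}$, $C_6$, $G_{24}'$, so that the relevant coset spaces are unchanged under $\Sn_2^1 \to P\Sn_2^1$, is exactly the reason this transfers as a bookkeeping exercise, and your identification of \cref{lem:PD} and \cref{lem:H1KZ2} as the needed inputs is accurate.

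The one inaccuracy is the claim that for $\Gamma_H$ the theorem ``is literally Theorem 1.2.1 of \cite{BeaudryRes}''. The particular maps stated here---$\delta_3 = \pi(e+i+j+k)(e-\alpha^{-1})\pi^{-1}$ and $\delta_2 e_1 \equiv (\alpha+i+j+k)e_1 \bmod \Jideal$---are not the ones appearing in Theorem 1.2.1 of \cite{BeaudryRes}; they result from rechoosing the last two maps of that complex using its self-duality (i.e.\ \cite[Thms.\ 3.3.1, 3.4.1]{BeaudryRes}, of which \cref{lem:delta3prime} is the analogue in the present paper), together with the refinement \cite[Thm.\ 3.4.5]{BeaudryRes} to control $\delta_2$ after that rechoosing. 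The paper makes this explicit just before \cref{cor:resbetterclassical}: ``in \cite{BeaudryRes}, the author could have proved the analogous result as above by redefining \dots the last map as $\partial_3'$ and the middle map as $\partial_2 g_2^{-1}$.'' The duality rechoosing is short, and your plan to ``rerun \cite{BeaudryRes} verbatim'' would in fact reproduce it, but your write-up folds it silently into the citation; it deserves to be named as a step, since the specific form of $\delta_3$ and the accompanying form of the $\delta_2$ congruence are exactly what is used downstream to match the Galois-twisted maps $\tr_\sigma(\delta_p)$.
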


The middle map is quite subtle. It is the composition of two maps, neither of which is explicit. In fact,
the element $\delta_2$ is not unique and there is no precise formula for it.
Rather, it is characterized by various properties, from which it can be approximated. See \cref{sec:middle} and, in particular, \cref{cor:finalbestapproxdelta2}
  below for more details and an approximation of $\delta_2$ modulo the ideal
\begin{equation}\label{defn:idealI}
\Iideal=(4, 2(IPS_2^1)^2, 2IPK^1, (IPK^1)^2, IPK^1 \cdot (IPS_2^1)^2, (IPS_2^1)^2 \cdot IPK^1 ) .
\end{equation}

The goal of this section is to construct a similar resolution for the group $\GG_2^1$ in the category of Galois-twisted modules. Recall the notation of \cref{defn:uparrownotation} for the following statement. For $x\in \WG{\G_2^1}$ (or $ \WG{P\G_2^1}$), we let
\[\tr_{\sigma}(x) = -(\zeta x+\zeta^\sigma x^{\sigma}).\]
The sign is chosen so that for a $\sigma$-invariant $x$ we get $\tr_\sigma(x) = x$.
\begin{theorem}\label{thm:updatedualres}
There is an exact sequence of complete left $\WG{\G_2^1}$--modules
\[0 \to\W {\uparrow}^{\G_{2}^1}_{G_{48}'}  \xra{\partial_3}\W {\uparrow}^{\G_{2}^1}_{G_{12}} \xra{\partial_2}\W {\uparrow}^{\G_{2}^1}_{G_{12}}   \xra{\partial_1}  \W {\uparrow}^{\G_{2}^1}_{G_{48}}  \xra{\varepsilon} \W \to 0 .\]
For the generators named as in \cref{thm:classicdualres}, 
the maps $\partial_p$  can be chosen to satisfy 
\[\partial_p(e_p)  = \delta_p^\phi e_{p-1}\]
for  $\delta_1^\phi, \delta_2^\phi, \delta_3^\phi \in \WG{\GG_2^1}$ which satisfy
\begin{align*}
\delta_1^\phi e_0&= \tr_\sigma(\delta_1) e_0 \\
\delta_2^\phi e_2 &\equiv \tr_\sigma(\delta_2)e_1 \mod \Iideal \\
\delta_3^\phi e_3 &= \tr_\sigma(\delta_3) e_2 
\end{align*}
for $\delta_1,\delta_2, \delta_3$ as in \cref{thm:classicdualres}.
Similarly, there is an exact sequence of complete left $\WG{P\G_2^1}$--modules
\[0 \to  \W {\uparrow}^{P\G_{2}^1}_{PG_{48}'}\xra{\partial_3}  \W {\uparrow}^{P\G_{2}^1}_{PG_{12}}  \xra{\partial_2}  \W {\uparrow}^{P\G_{2}^1}_{PG_{12}}  \xra{\partial_1}  \W {\uparrow}^{P\G_{2}^1}_{PG_{48}} \xra{\varepsilon} \W \to 0 \]
with maps described exactly as above.
\end{theorem}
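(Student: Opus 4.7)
The plan is to construct the Galois-twisted resolution by Galois-averaging the classical $\Z_2[\![\SS_2^1]\!]$-resolution of \cref{thm:classicdualres}, exploiting the splitting $\GG_2^1 \cong \SS_2^1 \rtimes \Gal$.

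The first step is the identification of modules. For each finite subgroup $H \in \{G_{48}, G_{12}, G_{48}'\}$, setting $H^0 = H \cap \SS_2^1$, the isomorphism $H/H^0 \cong \Gal$ yields a bijection of $\SS_2^1$-sets $\SS_2^1/H^0 \xrightarrow{\cong} \GG_2^1/H$, $sH^0 \mapsto sH$. Since $H^0$ is $\Gal$-stable under conjugation in $\GG_2^1$, the left-hand side carries a natural diagonal $\Gal$-action corresponding, under $\W$-linearization, to the natural $\Gal$-action on $\W[\![\GG_2^1/H]\!]$, so
\[
\W[\![\GG_2^1/H]\!] \cong \W \otimes_{\Z_2} \Z_2[\![\SS_2^1/H^0]\!]
\]
as $\GG_2^1$-modules. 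Thus the desired modules are precisely the $\W$-linearizations of the classical ones equipped with the diagonal $\Gal$-action.

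Next, I would construct the differentials. Viewing each $\delta_p \in \Z_2[\![\SS_2^1]\!] \subset \W[\![\GG_2^1]\!]$, I take as first approximation
\[
\widetilde{\delta}_p^\phi := \tr_\sigma(\delta_p) = -\bigl(\zeta \delta_p + \zeta^\sigma \delta_p^\sigma\bigr) \in \W[\![\SS_2^1]\!].
\]
Using $\sigma^2 = 1$ and $\zeta + \zeta^\sigma = -1$, a direct check shows $\sigma \widetilde{\delta}_p^\phi \sigma^{-1} = \widetilde{\delta}_p^\phi$, so left-multiplication by $\widetilde{\delta}_p^\phi$ is $\Gal$-equivariant and hence $\GG_2^1$-equivariant. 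To descend to a map between the appropriate quotients, $\widetilde{\delta}_p^\phi e_{p-1}$ must be $H_p$-invariant: invariance under $H_p^0$ is inherited from the classical resolution (both $\delta_p$ and $\delta_p^\sigma$ satisfy it), while invariance under a Galois representative of $H_p/H_p^0 \cong \Gal$ follows from the Galois-invariance of $\widetilde{\delta}_p^\phi$ itself.

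The main obstacle is to enforce the complex property $(\partial^\phi)^2 = 0$ together with exactness. Using $\zeta \zeta^\sigma = 1$ along with $\delta_p \delta_{p-1} e_{p-2} = 0$ and its $\sigma$-conjugate, a direct computation yields
\[
\widetilde{\delta}_p^\phi \widetilde{\delta}_{p-1}^\phi e_{p-2} = \bigl(\delta_p \delta_{p-1}^\sigma + \delta_p^\sigma \delta_{p-1}\bigr) e_{p-2},
\]
which need not vanish. I would resolve this by inductively perturbing $\widetilde{\delta}_p^\phi$ to a genuine $\delta_p^\phi$ differing by an element annihilating $e_{p-1}$ (so that the equation $\delta_p^\phi e_{p-1} = \tr_\sigma(\delta_p) e_{p-1}$ is preserved) chosen to kill the obstruction, exactly as in the classical construction; for the middle differential, this perturbation is controlled only modulo the Galois-twisted error ideal $\Iideal$ of \eqref{defn:idealI}, yielding $\delta_2^\phi \equiv \tr_\sigma(\delta_2) \pmod{\Iideal}$ from the classical congruence modulo $\Jideal$. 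Exactness is then verified by restriction along $\SS_2^1 \hookrightarrow \GG_2^1$: each restricted module is $\W \otimes_{\Z_2} \Z_2[\![\SS_2^1/H^0]\!]$ and each restricted differential equals $\tr_\sigma(\delta_p) = -\zeta \delta_p + (1+\zeta)\delta_p^\sigma$, which I would compare via a chain-homotopy equivalence to the classical resolution $C_\bullet$ tensored with $\W$ (which is exact) by exploiting the averaging identity together with exactness of both $C_\bullet$ and its $\sigma$-conjugate. Since restriction from $\W[\![\GG_2^1]\!]$ to $\W[\![\SS_2^1]\!]$ is faithfully exact, exactness of the twisted complex follows. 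The $P\GG_2^1$-version is proven identically using the $P\SS_2^1$-analogue of \cref{thm:classicdualres} and the decomposition $P\GG_2^1 \cong P\SS_2^1 \rtimes \Gal$.
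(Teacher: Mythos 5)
Your high-level plan --- identify the modules as $\W$-linearizations with diagonal Galois action, take $\tr_\sigma(\delta_p)$ as the differentials --- captures the statement of the theorem, but the proposal skips over precisely the steps the paper spends the most effort on, and the paper even contains an explicit warning that the "replace the modules and apply $\tr_\sigma$ to the maps" strategy is not a formal procedure. Several steps in your argument do not actually close.

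First, the well-definedness of the maps does not follow from $\sigma$-invariance of $\tr_\sigma(\delta_p)$. You claim that invariance under a Galois representative of $H_p/H_p^0 \cong \Gal$ "follows from the Galois-invariance of $\widetilde{\delta}_p^\phi$ itself," but the Galois representative inside $G_{12}$ is the element $[j-k]$, not the abstract $\sigma \in \Gal \subset \GG_2^1$. These differ by an element of $\SS_2^1$, so conjugation by $[j-k]$ is not the same as conjugation by $\sigma$, and $[j-k]\,\tr_\sigma(\delta_p)\,[j-k]^{-1} = \tr_\sigma(\delta_p)$ requires a genuine computation. The paper's proof that $\partial_1$ is well-defined (Lemma \ref{lem:N1partial} using Remark \ref{rem:taucom}) hinges on the specific identities $[j-k]\alpha = \alpha^\sigma[j-k]$ and $\tau\zeta = \zeta^\tau\tau$; this is not generic Galois invariance.

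Second, your central step --- "inductively perturbing $\widetilde{\delta}_p^\phi$ to kill the obstruction, exactly as in the classical construction" --- is where the actual content lives and it is not addressed. To know that the obstruction $(\delta_p\delta_{p-1}^\sigma + \delta_p^\sigma\delta_{p-1})e_{p-2}$ can be absorbed into a redefinition of $\delta_p^\phi$ while preserving both the congruence $\delta_p^\phi e_{p-1} \equiv \tr_\sigma(\delta_p) e_{p-1}$ and surjectivity onto the next kernel, you need to know the structure of that kernel. The paper obtains this by computing $H_*(PK^1, -)$ of each step (Lemmas \ref{lem:N0}, \ref{lem:N1}, \ref{lem:N2partial}), using Poincar\'e duality for $PK^1$ (Lemma \ref{lem:PD}), and invoking the Nakayama Lemma from \cite{HennCent}; none of this is automatic, and it is specifically why the paper works over $P\GG_2^1$ (where $PK^1$ is normal and Galois-invariant) rather than $\GG_2^1$ (where $K^1$ is not Galois-invariant), which is the opposite of your direction of deduction.

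Third, your exactness argument is not justified. You assert that the restricted twisted complex is chain-homotopy equivalent to $\W \otimes_{\Z_2} C_\bullet$ "by exploiting the averaging identity together with exactness of both $C_\bullet$ and its $\sigma$-conjugate," but no such equivalence is exhibited or obvious. The differentials $-\zeta\delta_p + (1+\zeta)\delta_p^\sigma$ are genuinely different operators from $\delta_p$, and having two exact complexes and a linear combination of their differentials does not produce a new exact complex or a chain homotopy. Finally, the exact formula $\delta_3^\phi e_2 = \tr_\sigma(\delta_3)e_2$ and the identification $M_3 \cong \W\uparrow^{P\GG_2^1}_{PG_{48}'}$ are obtained in the paper via a self-duality argument (Lemma \ref{lem:delta3prime}), which your proposal does not mention at all; without it you would only know $\delta_3^\phi$ up to an automorphism, and you would not know that the kernel is an induced module from $G_{48}'$.

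In short: the shape of the answer is right, but the proposal replaces the paper's four hard ingredients --- the explicit $PK^1$-homology computations, the Nakayama arguments for cyclicity of the successive kernels, the Lyndon--Hochschild--Serre and Poincar\'e duality computations identifying $M_3$, and the self-duality of the complex --- with appeals to "the classical construction" and an unproven chain-homotopy equivalence. Those are exactly the steps that make the theorem nontrivial.
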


\begin{warn}
The reader might be tempted to think that there is an easy procedure to transform our $\Z_2[\![\SS_2^1]\!]$-complex into a $\WG{\G_2^1}$-complex by replacing the modules in the obvious way and applying $\tr_{\sigma}$ to the maps, but this is not the case. The fact that we were able to express the maps this way is not formal. 
Again, the middle map $\partial_2$ is the composition of two non-explicit maps and is only approximated by $\tr_{\sigma}(\delta_2)$. See \cref{sec:middle}  for an in depth description of the middle map.
\end{warn}

The rest of this section is devoted to proving \Cref{thm:updatedualres}, which we do by emulating the construction of \cite{BeaudryRes}. This section will follow the structure of Section 3 of that reference. 
There, the trick is to construct a resolution of free $\Z_2[\![K^1]\!]$-modules. However, since $K^1$ is not Galois invariant, in our case, the right approach is to first construct a resolution of  $\WG{P\GG_2^1}$-modules as the subgroup $PK^1$ is Galois invariant.
 However, note again that $PK \cong K$ and $PK^1\cong K^1$ as abstract groups, so many of the results of \cite{BeaudryRes} apply.  If we construct resolution of $\WG{P\GG_2^1}$-modules, then the resolution for  $\WG{\GG_2^1}$ is obtained directly by restriction along the quotient map $\GG_2^1 \rightarrow P\GG_2^1$. So, it is sufficient to construct the resolution of  $\WG{P\GG_2^1}$-modules.  For this reason, for the rest of this section, we work with $P\GG_2^1$.

\begin{rem}
As we construct the resolutions, it will be convenient to use the following notation,
\[0 \to\sD_3 \xra{\partial_3}\sD_2\xra{\partial_2}\sD_1  \xra{\partial_1}\sD_0\xra{\varepsilon} \W \to 0 \]
where
\begin{align*}
\sD_0 &= \W {\uparrow}^{P\G_{2}^1}_{PG_{48}}  &  \sD_1 &=  \W {\uparrow}^{P\G_{2}^1}_{PG_{12}} &
 \sD_2 &=  \W {\uparrow}^{P\G_{2}^1}_{PG_{12}} &  \sD_3  &= \W {\uparrow}^{P\G_{2}^1}_{PG_{48}'} .
\end{align*}
We denote the canonical generator of $\sD_p$ by $e_p$. 
\end{rem}

\begin{warn}
For the purpose of the later sections, although we model our arguments on \cite[Section 3]{BeaudryRes}, some of our notation is designed to make further references to \cite{HennCent} more transparent. For example, in \cite{BeaudryRes}, $N_0$ is the analogue of the module we denote by $M_1$ here.
\end{warn}

\subsection{The construction of the resolution}
The resolution of   \Cref{thm:updatedualres} is spliced from three exact sequences which we construct in \Cref{lem:N0}, \Cref{lem:N1}  and \Cref{lem:M3} which make up the technical core of this paper. 
\begin{lemma}\label{lem:N0}
Let $\sD_0 = \W {\uparrow}^{P\G_{2}^1}_{PG_{48}}$ and let $\varepsilon$ be the augmentation map. Define $M_1$ by the short exact sequence of Galois twisted $\WW[\![P\GG_2^1]\!]$-modules
\begin{equation}\label{eq:M1}
\xymatrix{ 0 \ar[r] & M_1 \ar[r] & \sD_0\ar[r]^-{\varepsilon} & \W \ar[r] \ar[r] & 0.}
\end{equation}
Then $M_1$ is the left $\WG{P\G_2^1}$-submodule of $ \W {\uparrow}^{P\G_{2}^1}_{PG_{48}} $ generated by
\[ \tr_\sigma (\delta_1) e_0= - (\zeta(e-\alpha) + \zeta^{\sigma} (e-\alpha^{\sigma}))e_0\]
for $e$ the unit of $P\GG_2^1$ and $\alpha$ as defined in \eqref{eq:alpha}. Furthermore, $\alpha^{\sigma} e_0= \alpha^{-1}e_0$.
\end{lemma}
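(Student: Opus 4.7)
The lemma has two assertions. The auxiliary identity $\alpha^\sigma e_0 = \alpha^{-1} e_0$ follows from the relation $\alpha \alpha^\sigma = -1$ in $\W$ observed after \eqref{eq:pi}: as elements of $\mathbb{S}_2 \subset \mathbb{G}_2$ this says $\alpha^\sigma = -\alpha^{-1}$, and in the quotient $P\mathbb{G}_2 = \mathbb{G}_2/C_2$ the factor $-1 \in C_2$ becomes trivial, giving $\alpha^\sigma = \alpha^{-1}$ in $P\mathbb{S}_2^1$ and hence $\alpha^\sigma e_0 = \alpha^{-1} e_0$ in $\sD_0$. The containment $\WG{P\mathbb{G}_2^1}\cdot \tr_\sigma(\delta_1) e_0 \subseteq M_1$ is immediate from $\varepsilon(\tr_\sigma(\delta_1) e_0) = -\bigl(\zeta(1-1) + \zeta^\sigma(1-1)\bigr) = 0$.

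For the reverse inclusion, the plan is to reduce to the analogous statement over the subring $\W[\![P\mathbb{S}_2^1]\!] \subseteq \WG{P\mathbb{G}_2^1}$ and show that the proposed generator differs from the classical one by a unit. First, since $PG_{48}$ covers the Galois quotient (via $[j-k]$), the $P\mathbb{S}_2^1$-set $P\mathbb{G}_2^1/PG_{48}$ is isomorphic to $P\mathbb{S}_2^1/PG_{24}$, so $\sD_0|_{P\mathbb{S}_2^1} \cong \W\uparrow^{P\mathbb{S}_2^1}_{PG_{24}}$ with $P\mathbb{S}_2^1$ acting trivially on $\W$. Applying \cref{thm:classicdualres} for $P\mathbb{S}_2^1$ and tensoring with $\W$ over $\Z_2$ shows that $M_1|_{P\mathbb{S}_2^1}$ is the $\W[\![P\mathbb{S}_2^1]\!]$-submodule generated by $(e - \alpha)e_0$. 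Using the identity $\alpha^\sigma e_0 = \alpha^{-1} e_0$ and $(e - \alpha^{-1}) = -\alpha^{-1}(e - \alpha)$, one rewrites
\[
\tr_\sigma(\delta_1)e_0 \;=\; -\bigl(\zeta(e-\alpha) + \zeta^\sigma(e-\alpha^{-1})\bigr)e_0 \;=\; u \cdot (e - \alpha)e_0,
\]
where $u := -\zeta + \zeta^\sigma \alpha^{-1} \in \W[\![P\mathbb{S}_2^1]\!]$. It therefore suffices to show that $u$ is a unit, for then $\W[\![P\mathbb{S}_2^1]\!] \cdot \tr_\sigma(\delta_1) e_0 = M_1$, and \emph{a fortiori} $\WG{P\mathbb{G}_2^1}\cdot \tr_\sigma(\delta_1) e_0 = M_1$.

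The main obstacle will be verifying that $u$ is a unit in the non-local ring $\W[\![P\mathbb{S}_2^1]\!]$. The plan is to observe that $u$ already lies in the smaller subring $\W[\![\langle \alpha \rangle]\!]$ generated by the closed subgroup $\langle \alpha \rangle \subseteq P\mathbb{S}_2^1$. Since $\alpha \in PK^1$ with $PK^1$ pro-$2$, and a direct filtration estimate starting from $\alpha^2 = (3 + 8\omega)/7 \in 1 + 4\W$ shows $\alpha^{2^n} \notin \{\pm 1\}$ in $\W^\times$ for every $n \ge 0$, the element $\alpha$ has infinite order in $P\mathbb{S}_2^1$ and $\langle \alpha \rangle \cong \Z_2$. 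Hence $\W[\![\langle \alpha \rangle]\!] \cong \W[\![T]\!]$ via $\alpha \leftrightarrow 1 + T$ is a complete regular local ring with maximal ideal $(2, T)$, and modulo $(2, T)$ the element $u$ reduces to $-\zeta + \zeta^\sigma \equiv \zeta + \zeta^2 \equiv -1 \equiv 1 \pmod{2}$, a unit in $\F_4$. Therefore $u$ is a unit in $\W[\![T]\!]$, hence in $\W[\![P\mathbb{S}_2^1]\!]$, completing the proof.
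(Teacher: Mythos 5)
Your proof is correct, and it takes a genuinely different route from the paper's. The paper argues ``from scratch'': it identifies $M_1|_{PK^1} \cong IPK^1$, uses the connecting isomorphism to show $H_0(PK^1, M_1) \cong H_1(PK^1, \W)$, computes that $\zeta\overline{\alpha} + \zeta^\sigma\overline{\alpha}^\sigma = (1+2\zeta)\overline{\alpha}$, invokes the explicit $PQ_8$-action formulas of \cref{lem:H1KZ2} to see that $\overline{\alpha}$ generates $H_1(PK^1,\W)$ as a $\W[PG_{48}]$-module, and then lifts via the Nakayama-type lemma (Lemma 4.4 of \cite{HennCent}). You instead bootstrap off the known $P\SS_2^1$-resolution (\cref{thm:classicdualres}, imported from \cite{BeaudryRes}): restricting \eqref{eq:M1} along $P\SS_2^1$, you identify $M_1|_{P\SS_2^1}$ as generated by $(e-\alpha)e_0$, then factor $\tr_\sigma(\delta_1)e_0 = u\cdot(e-\alpha)e_0$ with $u = -\zeta + \zeta^\sigma\alpha^{-1}$ and verify $u$ is a unit. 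Your approach is shorter and avoids both the $H_1$ computation and the Nakayama step, at the cost of relying on the $P\SS_2^1$-version of \cref{thm:classicdualres} as input (which the paper also treats as available from \cite{BeaudryRes}, so there is no circularity).

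Two small remarks on the unit verification. First, the argument that $\langle\alpha\rangle\cong\Z_2$ is fine but more than you need: since $\alpha\in PK^1$ and $PK^1$ is pro-$2$, the completed group algebra $\W[\![PK^1]\!]$ is already local with maximal ideal $(2, IPK^1)$ and residue field $\F_4$; as $u = -\zeta + \zeta^\sigma\alpha^{-1} \in \W[\![PK^1]\!]$ reduces modulo $(2,IPK^1)$ to $-\zeta+\zeta^2 \equiv 1$ in $\F_4$, it is a unit in $\W[\![PK^1]\!]$ and hence in $\W[\![P\SS_2^1]\!]$, without any discussion of the order of $\alpha$. Second, if you do keep the $\langle\alpha\rangle$ route, the claim that $\alpha^{2^n}\notin\{\pm 1\}$ for all $n$ implies infinite order is correct precisely because $\alpha$ lies in the pro-$2$ group $PK^1$, so that the order of $\alpha$ in $P\SS_2^1$, if finite, must be a power of $2$; it is worth saying this explicitly so the reader does not worry about odd-order torsion.
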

\begin{proof}
The composition $PK^1 \to P\GG_2^1 \to P\GG_2^1/PG_{48}$ given by the inclusion followed by the projection onto the coset space is a bijection. Therefore, the short exact sequence defining $M_1$ restricts to a short exact sequence of $\WG{PK^1}$-modules
\[ \xymatrix{ 0 \ar[r] & M_1 \ar[r] &  \WG{PK^1} \ar[r]^-{\varepsilon} & \W \ar[r] \ar[r] & 0.}\]
It follows that, as $\WG{PK^1}$-modules, there is an
isomorphism $M_1 \cong IPK^1$. We will prove that 
\[ H_0( PK^1, M_1) \cong I_{\Z_2}PK^1/(I_{\Z_2}PK^1)^2 \otimes_{\Z_2} \W\] 
is generated by the image of
 \begin{align}\label{eq:theelementforfirstdiff}
-  (\zeta(e-\alpha) + \zeta^{\sigma} (e-\alpha^{\sigma}))e_0
  \end{align}
 as a $\W[PG_{48}]$-module. 
 
 Assuming this, we can finish the proof as follows.
Define $F\colon \W[\![P\G_2^1]\!]   \to M_1$ to be the module map which sends the unit in $\W[\![P\G_2^1]\!]$ to the element \eqref{eq:theelementforfirstdiff} of $M_1$. Then
\[\F_4\otimes_{\W[\![PK^1]\!]} F \colon \F_4\otimes_{\W[\![PK^1]\!]}\W[\![P\G_2^1]\!]  \to \F_4\otimes_{\W[\![PK^1]\!]}M_1  \]
is surjective since the source is $\F_4[ PG_{48}]$ and we will see that the target is cyclic as an $\F_4[ PG_{48}]$-module. Applying the Nakayama type Lemma 4.4 of \cite{HennCent} then proves the surjectivity of $F$, which implies the claim about $M_1$.

So, it remains to understand $H_0( PK^1, M_1) $ as an $\W[PG_{48}]$-module.
Since $PK^1 \cap \Gal =e$, the group $PK^1$ acts trivially on $\W$.
As in \Cref{lem:connecting} and \Cref{rem:connecting}, the long exact sequence on homology groups gives 
an isomorphism 
\[\xymatrix{ 
H_1(PK^1, \W) \ar[d]^-\cong \ar[r]^-{\delta'} & H_0( PK^1, M_1) \ar[d]^-\cong \\
 PK^1/\overline{[PK^1,PK^1 ]} \otimes_{\Z_2} \W \ar[r]   &   I_{\Z_2}PK^1/(I_{\Z_2}PK^1)^2  \otimes_{\Z_2} \W}\]
where the bottom horizontal map sends $\overline{g}$ to $e-g$. Therefore, it suffices to prove that $ \zeta \ba +\zeta^{\sigma} \ba^{\sigma}$ generates $H_1(PK^1, \W) $ as a  $\WSG{ PG_{48}}$-module.

The image of $\alpha^{\sigma}=-\alpha^{-1}$ under the map 
\[p \colon PK^1 \to PK^1/\overline{[PK^1,PK^1 ]} \cong H_1(PK^1, \Z_2)  \] is the same as that of $\alpha^{-1}$ since we are working modulo $C_2=\{\pm 1\}$. The image of $\alpha^{-1}$ is $-\ba$ as $p$ is a group homomorphism. Here, note that the $-1$ is a coefficient and $-\ba$ should not be confused with $\overline{-\alpha} = \overline{\alpha}$.
It follows that
\begin{align*}
 \zeta \ba +\zeta^{\sigma} \ba^{\sigma} &=  (1+2\zeta)\ba     \in \W/4\{\bar \alpha\}.
 \end{align*}
 Since $(1+2\zeta)$ is a unit, it suffices to prove that $\ba$ generates as an $\WSG{ PG_{48}}$-module. But this follows directly from the formulas for the action of $PQ_8$ on $\ba$ of \Cref{lem:H1KZ2}: In fact, $\ba$ is a generator for $H_1(PK^1, \W)$  as a $\WSG{ PQ_8}$-module.
\end{proof}

\begin{rem}\label{rem:taucom}
For any $\tau \in PG_{12}$, the following holds in  $\W[\![P\G_2^1]\!]$:
\[ \tau(\zeta(e-\alpha) + \zeta^{\sigma} (e-\alpha^{\sigma}))= (\zeta(e-\alpha) + \zeta^{\sigma} (e-\alpha^{\sigma}))\tau \]
This can be deduced from the fact that $\tau  \zeta  = \zeta^{\tau} \tau$ and the fact that $\tau \alpha= \alpha^{\tau} \tau$
where here we use that $PC_6$ commutes with $\alpha$ and 
\[[j-k] \alpha = \alpha^{\sigma} [j-k]\]
which follows from \eqref{eq:jmk}.
\end{rem}

\begin{lemma}\label{lem:N1partial}
Let $M_1$ be as in \Cref{lem:N0}. Let $ \sD_1= \W{\uparrow}^{P\G_{2}^1}_{PG_{12}}$ with canonical generator $e_1$. There is a map $\partial_1 \colon \sD_1\to M_1$ of $\WG{P\G_2^1}$-modules determined by
\[\partial_1(e_1) = \tr_\sigma(\delta_1)e_0=  -(\zeta(e-\alpha) + \zeta^{\sigma} (e-\alpha^{\sigma}))e_0 . \]
\end{lemma}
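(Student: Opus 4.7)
The plan is to construct $\partial_1$ as the unique $\WG{P\G_2^1}$-linear map on the cyclic module $\sD_1 \cong \WG{P\G_2^1} \otimes_{\W[PG_{12}]} \W$ induced by the assignment $e_1 \mapsto m$, where $m := -\bigl(\zeta(e-\alpha) + \zeta^\sigma(e-\alpha^\sigma)\bigr)e_0 \in \sD_0$. Such a map is well defined as soon as $m$ is fixed by $PG_{12}$, and it takes values in $M_1$ as soon as $\varepsilon(m) = 0$. So the proof will reduce to verifying these two properties of the single element $m$.

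First I would check $\varepsilon(m) = 0$. Since $\varepsilon(g \cdot e_0) = 1$ for every $g \in P\G_2^1$, the right-hand side collapses to
\[\varepsilon(m) = -\zeta(1 - 1) - \zeta^\sigma(1 - 1) = 0.\]
Next I would establish $PG_{12}$-invariance. For any $\tau \in PG_{12}$, the inclusion $PG_{12} \subseteq PG_{48}$ together with the twisted trivial action give $\tau \cdot e_0 = e_0$, so
\[\tau \cdot m = \bigl(\tau \cdot (-\zeta(e-\alpha) - \zeta^\sigma(e-\alpha^\sigma))\bigr) e_0.\]
By \cref{rem:taucom}, $\tau$ commutes with $\zeta(e-\alpha) + \zeta^\sigma(e-\alpha^\sigma)$ inside $\WG{P\G_2^1}$, so the right-hand side equals $m$. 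This yields the desired map $\partial_1 \colon \sD_1 \to M_1$.

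The only subtlety — not really an obstacle, as the genuine work has already been packaged — is bookkeeping around the twisted conventions. The identity $\alpha^\sigma = -\alpha^{-1}$ combined with the triviality of $-1 \in C_2$ inside $P\G_2^1$ justifies the auxiliary identity $\alpha^\sigma \cdot e_0 = \alpha^{-1} \cdot e_0$ asserted in \cref{lem:N0}; meanwhile, the commutation relations $\tau\alpha = \alpha^{\phi(\tau)} \tau$ (using \eqref{eq:jmk} when $\tau = [j-k]$ and the fact that $PC_6$ commutes with $\alpha$ otherwise) and $\tau\zeta = \zeta^{\phi(\tau)} \tau$ together yield \cref{rem:taucom}. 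With these already in hand, the present lemma is essentially a definitional check.
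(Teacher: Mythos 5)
Your proof is correct and follows the same line as the paper: the key facts are that $\tau e_0 = e_0$ for $\tau \in PG_{12} \subseteq PG_{48}$ and that $\tau$ commutes with $\zeta(e-\alpha)+\zeta^\sigma(e-\alpha^\sigma)$ by \cref{rem:taucom}, which together give well-definedness. Your explicit check $\varepsilon(m)=0$ is a harmless addition; the paper instead invokes \cref{lem:N0}, which already identifies $M_1$ as the cyclic submodule generated by $\tr_\sigma(\delta_1)e_0$, so the image lands in $M_1$ automatically.
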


\begin{proof}
The inclusion $G_{12}(\Gamma) \subseteq G_{48}(\Gamma)$ induces an inclusion $PG_{12} \subseteq PG_{48}$.
So, for $\tau \in PG_{12}$, $\tau e_0 = e_0$.
  On the one hand, 
\[\partial_1(\tau e_1) = \partial_1(e_1)=-(\zeta(e-\alpha) + \zeta^{\sigma} (e-\alpha^{\sigma}))e_0. \] 
On the other, 
\begin{align*}
\tau \partial_1( e_1) 
&=-\tau (\zeta(e-\alpha) + \zeta^{\sigma} (e-\alpha^{\sigma}))e_0 \\
& =-   (\zeta(e-\alpha) + \zeta^{\sigma} (e-\alpha^{\sigma})) \tau e_0 \\
& = -  (\zeta(e-\alpha) + \zeta^{\sigma} (e-\alpha^{\sigma}))  e_0 
\end{align*}
where we used \cref{rem:taucom}.
This shows that $\partial_1$ is well defined. 
\end{proof}

\begin{lemma}\label{lem:N1}
Let $M_2$ be defined by the short exact sequence
\begin{equation}\label{eq:M2}
\xymatrix{0 \ar[r] &  M_2 \ar[r] &  \sD_1\ar[r]^-{\partial_1} &  M_1 \ar[r] & 0 }.
\end{equation}
There exists $\thetazero  \in \WG{P\G_2^1}$ such that
\begin{enumerate}
\item  $\thetazero  e_1$ is in the kernel of $\partial_1$, and
\item  $\thetazero  e_1 \equiv (3+i+j+k)e_1 \mod (4,  IPK^1)$.
\end{enumerate}
If $\thetazero $ satisfies these conditions, then  $\thetazero  e_1$ generates $M_2$ as a $ \WG{P\G_2^1}$-module.
\end{lemma}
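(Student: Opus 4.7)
The plan is to parallel the classical construction of \cite[Section 3]{BeaudryRes}, splitting the argument into three stages: compute $H_0(PK^1, M_2)$ as a $\W[PG_{48}]$-module; exhibit an explicit candidate $\thetazero$ satisfying conditions (1) and (2); and invoke a Nakayama-type lemma to conclude. The last stage is routine once the first two are in place.

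For the homology computation, I would exploit that $P\G_2^1 \cong PK^1 \rtimes PG_{48}$, so $\sD_0 = \WG{P\G_2^1/PG_{48}}$ is a free $\WG{PK^1}$-module of rank one. Combined with \cref{lem:H1KZ2}, the long exact sequence in $PK^1$-homology for \eqref{eq:M1} yields $H_0(PK^1, M_1) \cong H_1(PK^1, \W)$ and $H_1(PK^1, M_1) = 0$. Since $PK^1$ is torsion-free and $PG_{12}$ is finite, $PK^1$ acts freely on $P\G_2^1/PG_{12}$ with four orbits indexed by $PG_{48}/PG_{12}$, so $\sD_1$ is free of rank four over $\WG{PK^1}$. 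The long exact sequence for \eqref{eq:M2} therefore collapses into a short exact sequence
\[
0 \to H_0(PK^1, M_2) \to \W[PG_{48}/PG_{12}] \to H_1(PK^1, \W) \to 0
\]
of $\W[PG_{48}]$-modules; the right-hand map is explicit via the description of the connecting homomorphism in \cref{rem:connecting}, and this identifies $H_0(PK^1, M_2)$ as a $\W[PG_{48}]$-module.

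For existence, I would take $\thetazero$ of the form $\tr_\sigma(\delta_2) + (\text{correction})$, where $\delta_2$ is the classical middle differential of \cref{thm:classicdualres}. The identity $\delta_1 \delta_2 e_0 = 0$ in $\sD_0$ does not translate immediately under $\tr_\sigma$ because of the Galois twisting, but any residual error lies in deeper filtration (using the commutation relation of \cref{rem:taucom}) and can be absorbed into a correction term lying in $M_2$. The congruence (2) is then verified by direct computation: in the quotient by $(4, IPK^1)$ we have $\alpha, \alpha^\sigma \equiv e$, and using $\zeta + \zeta^\sigma = -1$ one matches $\tr_\sigma(\alpha + i + j + k)$ against $(3+i+j+k)e_1$ after reconciling the scalar contributions from $\zeta$ and $\zeta^\sigma$ on the residual $\alpha$-terms.

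Finally, the generation claim follows from the Nakayama lemma \cite[Lemma 4.4]{HennCent}: it suffices to show that the image of $\thetazero e_1$ in $\F_4 \otimes_{\WG{PK^1}} M_2 \cong \F_4 \otimes_\W H_0(PK^1, M_2)$ generates as an $\F_4[PG_{48}]$-module. Combining the mod $(4, IPK^1)$ congruence with the explicit $PG_{48}$-action on $H_1(PK^1, \W)$ from \cref{lem:H1KZ2}, this reduces to checking that the class of $(3+i+j+k)e_1$ is a $PG_{48}$-generator in the kernel of the map in the displayed short exact sequence above. The main obstacle I expect is the existence stage: producing $\thetazero \in \ker(\partial_1)$ with the prescribed leading term modulo $(4, IPK^1)$ requires careful control of Galois-twisted products, and will likely force several rounds of iterative correction paralleling, but complicating, the argument in \cite{BeaudryRes}.
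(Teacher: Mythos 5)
Your overall architecture --- long exact sequence in $PK^1$-homology to compute $H_0(PK^1, M_2)$, exhibit a generator, apply a Nakayama-type lemma --- matches the paper's proof, and your reduction to the explicit $\W[PG_{48}]$-module structure on $H_0(PK^1, M_2)$ via the connecting homomorphism is exactly right. The divergence is in the existence step, where your plan is both more laborious than needed and has a genuine gap. For \Cref{lem:N1} the paper handles existence in one sentence: $M_2$ surjects onto $H_0(PK^1, M_2)$, so any preimage of the generator $f = (3+i+j+k)e_1$ produces a valid $\thetazero$, with no formula required. The explicit $\tr_\sigma$-based construction you sketch is precisely what the paper does later, in \Cref{lem:thetaphibest}, once the whole resolution is assembled; you have correctly anticipated that explicit control is eventually needed, but imported it into the wrong lemma, and you consequently misidentify ``the existence stage'' as the main obstacle when the actual technical content of \Cref{lem:N1} is the verification that $(3+i+j+k)e_1$ generates $H_0(PK^1, M_2)$ over $\WSG{PG_{48}}$ (for which both you and the paper defer to the classical computation in \cite[Lemma~3.1.2]{BeaudryRes}).

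The concrete gap in your explicit route: you propose $\thetazero = \tr_\sigma(\delta_2) + (\text{correction})$ with $\delta_2$ the classical middle differential, but $\delta_2 e_1$ is only pinned down modulo $\Jideal = (2, \ldots)$, which is too coarse for the mod $(4, IPK^1)$ congruence required by condition (2). Indeed, using $\zeta + \zeta^\sigma = -1$ and $\alpha, \alpha^\sigma \equiv e \bmod IPK^1$, one gets $\tr_\sigma(\alpha + i + j + k) \equiv 1 + i + j + k$ modulo $IPK^1$, and $1 \not\equiv 3 \bmod 4$: the mod-$2$ ambiguity in $\delta_2$ destroys exactly the information you need. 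The correct classical input is $\thetaclass$ from \Cref{thm:thetaall} --- the ``pre-composition'' element preceding the automorphism $g_2$ --- which satisfies the tighter congruence $\thetaclass \equiv 3 + i + j + k \bmod (4, IPK^1)$ with $\sigma$-invariant leading term, so that $\tr_\sigma(\thetaclass) \equiv -(\zeta + \zeta^\sigma)(3+i+j+k) = 3+i+j+k$ lands where it should. (You also reversed the relation: what you need is $\thetaclass\, \delta_1 e_0 = 0$, with $\thetaclass$ on the left, not $\delta_1 \delta_2 e_0 = 0$.)
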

\begin{proof}

By  \Cref{lem:N0}, $\partial_1$ surjects onto $M_1$. Let $M_2 = \ker(\partial_1)$. Note that
there is
an isomorphism of $\W[PG_{48}]$-modules
\[H_0(PK^1,  \sD_1) \cong   \WSG{PG_{48}/PG_{12}}.\]
This has rank 4 over $\W$ and generators are given by the residue classes of $e_1$, $ie_1$, $je_1$, $ke_1$.

Since $M_1 \cong  IPK^1$, as in Lemma 3.1.2 of \cite{BeaudryRes} we obtain that
\[H_1(PK^1, M_1) \cong H_2(PK^1, \W) =0.\]
From this, we get a short exact sequence
\[ 0 \to H_0(PK^1, M_2) \to H_0(PK^1, \sD_1) \to H_0(PK^1, M_1) \to 0.\]
As above, $H_0(PK^1, M_1) \cong H_1(PK^1, \W)$ and the latter is torsion. As a $\W$-module, $H_0(PK^1, M_2)$ must then be a free $\W$ sub-module of rank $4$ in $H_0(PK^1,  \sD_1)$.

To identify generators, we use the isomorphism 
\[\xymatrix{
H_0(PK^1, M_1) \ar[d]^-{\cong}  \ar[r]^-{\cong}  & H_1(PK^1, \W) \ar[d]^-\cong \\
I_{\Z_2}PK^1/(I_{\Z_2}PK^1)^2 \otimes_{\Z_2}\W \ar[r] &  \W/4\{\ba\} \oplus \W/2\{\ba_i,\ba_j\}}\]
which
maps $e-\alpha^{-1}$ to $-\overline{\alpha}$. So
\[ (\zeta(e-\alpha) + \zeta^{\sigma} (e-\alpha^{-1})) \mapsto \zeta \ba - \zeta^{\sigma} \ba = (1+2\zeta)\ba.\]
Using the formulas of \cref{lem:H1KZ2}, we get
\begin{align*}
\partial_1(e_1) &\mapsto - (1+2\zeta)\ba \\
\partial_1(ie_1) &\mapsto - (1+2\zeta)(\ba+\ba_i)  \\
\partial_1(je_1) &\mapsto  -(1+2\zeta) (\ba+\ba_j)  \\
\partial_1(ke_1) &\mapsto -(1+2\zeta)(3\ba+\ba_i+\ba_j) .
\end{align*}
At this point, the proof of Lemma 3.1.2 of \cite{BeaudryRes} applies in a straightforward way to prove that the kernel of $H_0(PK^1, \partial_1)$ is generated by 
\[f=(3+i+j+k)e_1  \in H_0(PK^1, M_2)  \]
as a $\WSG{PG_{48}}$-module. By the form of Nakayama's Lemma stated in Lemma 4.3 of \cite{ghmr},  $M_2$ is a cyclic $\W[\![P\G_2^1]\!]$-submodule of $\sD_1$ and any element of $M_2$ which reduces to $f$ in $H_0(PK^1, M_2)$ is a generator. Note that $M_2$ surjects onto $H_0(PK^1, M_2)$ so such elements exist. 
It follows that any element $\thetazero $ of $\W[\![P\G_2^1]\!]$ such that $\thetazero  e_1 \in \ker(\partial_1)$ and
\begin{align*} \thetazero  e_1 \equiv (3+i+j+k)e_1 \mod (4,  IPK^1)\end{align*}
is a generator. 
 This proves the claim.
\end{proof}

\begin{lemma}\label{lem:N2partial}
Let $\sD_2 = \W{\uparrow}^{P\GG_2^1}_{PG_{12}}$ with canonical generator $e_2$. There exists $\thetaphi  \in \WG{P\G_2^1}$ such that
\begin{enumerate}[(1)]
\item $\tau \thetaphi  = \thetaphi  \tau $ for any $\tau \in PG_{12}$,
\item $\thetaphi  e_1$ is in the kernel of $\partial_1$, and
\item $\thetaphi e_1 \equiv (3+i+j+k)e_1 \mod (4, IPK^1)$.
\end{enumerate}
If  $\thetaphi  $ is any element which satisfies these conditions, then
 the map of $\WG{P\G_2^1}$-modules $\widetilde{\partial}_2 \colon \sD_2 \to \sD_1$ given by
\[ \widetilde{\partial}_2(e_2) = \thetaphi e_1\]
is well-defined and
surjects onto $M_2 =\ker(\partial_1)$.

For any choice of element $\thetazero $ as in \Cref{lem:N1}, the element
\begin{equation}\label{eq:thetaphi}
 \thetaphi =-\frac{1}{3}\sum_{g\in PG_{12}} \zeta^{g}g\thetazero g^{-1}  
\end{equation}
satisfies the above conditions.
\end{lemma}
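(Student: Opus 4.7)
The plan is to prove the lemma in two stages: first, that any $\thetaphi$ satisfying (1)--(3) defines a well-defined $\WG{P\G_2^1}$-module map $\widetilde{\partial}_2$ surjecting onto $M_2$; second, that the averaging formula produces such a $\thetaphi$. For the first stage, note that $\sD_2$ is presented as a left $\WG{P\G_2^1}$-module by the relations $\tau e_2 = e_2$ for $\tau \in PG_{12}$, so $\widetilde{\partial}_2(e_2) = \thetaphi e_1$ extends well-definedly iff $\tau \thetaphi e_1 = \thetaphi e_1$ for each $\tau \in PG_{12}$; since $\tau e_1 = e_1$, condition (1) yields this via $\tau \thetaphi e_1 = \thetaphi \tau e_1 = \thetaphi e_1$. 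Condition (2) places $\thetaphi e_1$ in $M_2 = \ker \partial_1$, and conditions (2) and (3) are exactly the hypotheses of \cref{lem:N1} that force any such element to generate $M_2$ as a $\WG{P\G_2^1}$-module.

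For condition (1) on the averaging formula, I would move $\tau$ past each scalar using the twisted relation $\tau \cdot \zeta^g = \zeta^{\tau g}\tau$ (which follows from $\tau w = w^{\phi(\tau)}\tau$ together with $\phi(\tau)\phi(g) = \phi(\tau g)$), then reindex the sum by $h = \tau g$ to obtain $\tau\thetaphi = \thetaphi \tau$. For condition (2), compute
\[\partial_1(\thetaphi e_1) = -\tfrac{1}{3}\sum_{g \in PG_{12}} \zeta^g g \thetazero g^{-1} \tr_\sigma(\delta_1) e_0;\]
by \cref{rem:taucom}, $g^{-1}$ commutes with $\tr_\sigma(\delta_1)$ for $g \in PG_{12}$, and $g^{-1}e_0 = e_0$ since $PG_{12} \subseteq PG_{48}$, so this collapses to $-\tfrac{1}{3}\sum_g \zeta^g g \cdot \partial_1(\thetazero e_1) = 0$ by \cref{lem:N1}.

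The delicate step is condition (3). Since $PK^1$ is normal in $P\G_2^1$ (\cref{warn:KnotGal}), the ideal $(4, IPK^1)$ is two-sided. The crux is the identity $g(3+i+j+k)g^{-1} = 3+i+j+k$ in $\WG{P\G_2^1}$ for every $g \in PG_{12}$: elements of $PC_6$ cyclically permute $i,j,k$, while $[j-k]$ acts in $\G_2^1$ by $i \mapsto -i$, $j \mapsto -k$, $k \mapsto -j$, whose signs disappear in the projective quotient, yielding $i \mapsto i$, $j \leftrightarrow k$. Writing $\thetazero = (3+i+j+k) + s$ with $se_1 \in (4, IPK^1)\sD_1$ by hypothesis, the invariance gives $\thetaphi = (3+i+j+k) - \tfrac{1}{3}\sum_g \zeta^g g s g^{-1}$, using $\sum_g \zeta^g = 3\zeta + 3\zeta^\sigma = -3$ and the fact that $3$ is a unit in $\W$. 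Applying to $e_1$ and using $g^{-1}e_1 = e_1$, the error term $-\tfrac{1}{3}\sum_g \zeta^g g(se_1)$ lies in $(4, IPK^1)\sD_1$ by two-sidedness, so condition (3) holds. The main obstacle throughout is this projective cancellation of signs: in $\G_2^1$ itself, $[j-k]$ genuinely flips signs on $i+j+k$, and the averaging would not collapse to a scalar multiple of $3+i+j+k$. This projective-versus-nonprojective distinction is the structural reason for building the resolution over $P\G_2^1$ before transferring it to $\G_2^1$ by restriction.
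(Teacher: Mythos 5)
Your proof is correct and follows essentially the same route as the paper: well-definedness from condition~(1), surjectivity from (2), (3) together with \cref{lem:N1}, the reindexing trick for (1) of the averaging formula, commuting $\tr_\sigma(\delta_1)$ past $PG_{12}$ for (2), and invariance of $3+i+j+k$ under $PG_{12}$-conjugation plus the computation $\sum_{g\in PG_{12}}\zeta^g=-3$ for (3). Your explicit decomposition $\thetazero=(3+i+j+k)+s$ and the remark that $(4,IPK^1)$ is two-sided because $PK^1\trianglelefteq P\GG_2^1$ are slight reorganizations of the paper's inline congruence chase, and your closing observation on why the projective quotient is needed (so that $[j-k]$ acts without signs) is a helpful, accurate gloss on the structural choice.
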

As the notation suggestion, the definition of $\widetilde{\partial}_2$ is temporary and will be modified to $\partial_2$ later in this section.

\begin{proof}[Proof of \cref{lem:N2partial}.]
Given $\thetaphi $ which satisfies (1), (2) and (3), the map $\widetilde{\partial}_2$ as defined above will be well-defined by (1) and it will surject onto $M_2$ by (2), (3) and \Cref{lem:N1}. So, it remains to prove existence. We do this by fixing an element $\thetazero $ as in \Cref{lem:N1} and showing that $\thetaphi $  as in \eqref{eq:thetaphi} does the job.
For $\tau \in PG_{12}$, we have
\[\tau \thetaphi  =-\frac{1}{3}\sum_{g\in PG_{12}} \tau \zeta^g g\thetazero g^{-1}\tau^{-1} \tau=-\frac{1}{3}\sum_{g\in PG_{12}}  \zeta^{\tau g}\tau g\thetazero g^{-1}\tau^{-1} \tau  = \thetaphi  \tau.\]
So (1) holds. For (2),  using \cref{rem:taucom} and the fact that $g^{-1}e_0=e_0$ for $g\in PG_{12}$ to obtain the second equality, we have
\begin{align*}
\partial_1(\thetaphi  e_1) &=-\frac{1}{3} \sum_{g\in PG_{12}} \zeta^g g\thetazero g^{-1}  (\zeta(e-\alpha) + \zeta^{\sigma} (e-\alpha^{\sigma})) e_0 \\
&=-\frac{1}{3}\sum_{g\in PG_{12}}  \zeta^g g\thetazero   (\zeta(e-\alpha) + \zeta^{\sigma} (e-\alpha^{\sigma})) e_0 \\
&=-\frac{1}{3}\sum_{g\in PG_{12}}  \zeta^g g\partial_1(\thetazero ) =0.\end{align*}
For (3), we will use that in $P\G_{2}^1$, we have
\begin{align*}
[j-k] i [j-k]^{-1}&= i &  [j-k] j [j-k]^{-1}&= k & [j-k] k [j-k]^{-1}&= j  \\
\omega i  \omega^{-1}&= j & \omega k \omega^{-1}&= k & \omega k \omega^{-1}&= i .
\end{align*}
We get for any $\tau \in PG_{12}$ that
\[ \tau(3+i+j+k) = (3+i+j+k)\tau \]
 in $\W[\![P\G_2^1]\!]$.
Modulo $ (4,  IPK^1)$, we then compute
\begin{align*}
\thetaphi e_1 &=-\frac{1}{3} \sum_{g\in PG_{12}}  \zeta^g g\thetazero  e_1 \\
&\equiv \sum_{g\in PG_{12}}  \zeta^g g(3+i+j+k)e_1 \\
&\equiv  \zeta  \sum_{g\in PC_6} g(3+i+j+k)e_1 + \zeta^\sigma \sum_{g\in PC_6}  [j-k] g(3+i+j+k)e_1  \\
&\equiv 3 \left(\zeta  (3+i+j+k)e_1 + \zeta^\sigma (3+i+j+k)e_1\right)  \\
&\equiv  (3+i+j+k)e_1 . \qedhere
\end{align*}
\end{proof}

\begin{lemma}\label{lem:M3}
Let $M_3  = \ker(\widetilde{\partial}_2)$ for $\widetilde{\partial}_2$ as in \Cref{lem:N2partial}, so that there is an exact sequence of $\WW[\![P\GG_2^1]\!]$-modules
\begin{equation}\label{eq:M3}
\xymatrix{ 0 \ar[r] &  M_3  \ar[r] &  \sD_2 \ar[r]^-{\widetilde{\partial}_2} \ar[r] & M_2 \ar[r] & 0 .}
\end{equation}
There is an isomorphism $M_3 \cong \WG{PK^1}$ as $\WG{PK^1}$-modules.
\end{lemma}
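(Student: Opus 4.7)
My strategy is to restrict every module to the closed normal subgroup $PK^1$ and exploit that $PK^1$ is an orientable Poincar\'e duality pro-$2$ group of dimension $3$ (\cref{lem:PD}). The first step is to rewrite the $\sD_i$ as $\WG{PK^1}$-modules. The decomposition $P\GG_2^1 \cong PK^1 \rtimes PG_{48}$ from \cref{warn:KnotGal}, combined with the fact that $PK^1$ is torsion-free (as a pro-$2$ group of finite $\W$-cohomological dimension, by \cref{lem:PD}), implies $PK^1 \cap PG_{48} = e$ and hence $PK^1 \cap PG_{12} = e$. Consequently $P\GG_2^1/PG_{48} \cong PK^1$ and $P\GG_2^1/PG_{12} \cong PK^1 \times PG_{48}/PG_{12}$ as $PK^1$-sets, yielding
\[
\sD_0 \cong \WG{PK^1}, \qquad \sD_1 \cong \sD_2 \cong \WG{PK^1}^{\oplus 4}
\]
as $\WG{PK^1}$-modules, since $[PG_{48}:PG_{12}] = 4$.

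Splicing \eqref{eq:M1}, \eqref{eq:M2}, and \eqref{eq:M3}, I obtain an exact sequence
\[
0 \to M_3 \to \sD_2 \xrightarrow{\widetilde{\partial}_2} \sD_1 \xrightarrow{\partial_1} \sD_0 \xrightarrow{\varepsilon} \W \to 0
\]
of $\WG{PK^1}$-modules in which the three $\sD_i$ are free. Since the trivial module $\W$ has projective dimension at most $3$ over $\WG{PK^1}$ by \cref{lem:PD}, the third syzygy $M_3$ must be a projective $\WG{PK^1}$-module. Because $PK^1$ is a compact $2$-adic analytic pro-$2$ group, $\WG{PK^1}$ is a Noetherian complete local ring with residue field $\F_4$; hence $M_3 \subseteq \sD_2$ is finitely generated, and being a finitely generated projective module over such a ring, it is free.

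To pin down the rank $r$ of $M_3$, I would apply $\W \otimes_{\WG{PK^1}} -$ to the now-complete free resolution of $\W$, obtaining a complex $\W^{\oplus r} \to \W^{\oplus 4} \to \W^{\oplus 4} \to \W$ whose homology computes $H_*(PK^1, \W)$. By \cref{lem:H1KZ2} and Poincar\'e duality, these groups have $\W$-ranks $(1,0,0,1)$, so the Euler characteristic identity
\[
r - 4 + 4 - 1 \;=\; 1 - 0 + 0 - 1 \;=\; 0,
\]
valid because $PK^1$ is an odd-dimensional orientable Poincar\'e duality group, forces $r = 1$; hence $M_3 \cong \WG{PK^1}$. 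The main non-formal input is the ring-theoretic step, namely that $PK^1$ is uniform $2$-adic analytic (so that $\WG{PK^1}$ is Noetherian) and pro-$2$ (so that finitely generated projectives over $\WG{PK^1}$ are free); both properties are standard consequences of the explicit filtration of $PK$ by the subgroups $F_{i/2}$ and should be cited rather than reproved.
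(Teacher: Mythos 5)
Your proof is correct, but it takes a genuinely different route from the paper's. Both proofs begin the same way: restrict to $PK^1$, use the decomposition $P\GG_2^1 \cong PK^1 \rtimes PG_{48}$ from \cref{warn:KnotGal} to identify $\sD_0$ as free of rank $1$ and $\sD_1,\sD_2$ as free of rank $4$ over $\WG{PK^1}$, and invoke \cref{lem:PD}. From there, the paper computes $H_n(PK^1,M_3)$ by dimension-shifting along the three short exact sequences (using that $\widetilde\partial_2$ is an isomorphism on $H_0(PK^1,-)$), finds that it is $\W$ in degree $0$ and vanishes above, and then applies the Nakayama-type Lemma 4.4 of \cite{HennCent} to show the cyclic map $\WG{PK^1}\to M_3$ sending $x\mapsto xe'$ (for $e'$ a generator of $H_0$) is an isomorphism. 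You instead argue structurally: since $\mathrm{pd}_{\WG{PK^1}}(\W)=3$ by \cref{lem:PD}, the third syzygy $M_3$ is projective; Lazard's theorem makes $\WG{PK^1}$ a Noetherian complete local ring (with residue field $\F_4$, as $PK^1$ is pro-$2$), so the finitely generated projective $M_3$ is free; and the $\W$-rank is pinned to $1$ by the Euler-characteristic identity using the ranks $(1,0,0,1)$ of $H_*(PK^1,\W)$ from \cref{lem:H1KZ2}. Your approach trades the paper's explicit Nakayama argument for heavier ring-theoretic input (Noetherianity of $\WG{PK^1}$, structure theory of projectives over complete local rings), which is standard but not used elsewhere in the paper; the paper's route stays closer to the toolkit it has already set up. One small redundancy: the triviality of $PK^1\cap PG_{48}$ is already built into the semidirect product statement of \cref{warn:KnotGal}, so the appeal to torsion-freeness of $PK^1$ is not needed. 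Both approaches are valid and give the same conclusion.
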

\begin{proof}
Note
\[H_0(PK^1,  \sD_2) \cong \WSG{PG_{48}/PG_{12}} \cong H_0(PK^1,  M_2 ) \]
as a $\WSG{PG_{48}}$-modules and the map $\widetilde{\partial}_2$ is designed to induce such an isomorphism. Indeed, $\widetilde{\partial}_2$ maps the image of the canonical generator $e_2$ in $H_0(PK^1,  \sD_2)$ to a generator $f$ of $H_0(PK^1,  M_2 )$.
Furthermore, $\sD_0$, $\sD_1$ and $\sD_2$ are free as $\WG{PK^1}$-modules. Together, these facts imply that for $n\geq 0$, there are isomorphisms
\begin{align*}H_n(PK^1, M_3) \cong H_{n+3}(PK^1, \W) \cong \begin{cases} \W & n=0 \\
0 & n>0\end{cases}
\end{align*}
as $PG_{48}$-modules.
Choosing $e'\in M_3$ so that $e'$ is a $\W$-module generator for the homology $H_0(PK^1, M_3)$,
we construct a map
\[ \WG{PK^1} \to M_3\]
which sends $x$ to $xe'$. By Lemma 4.4 of \cite{HennCent}, this is an isomorphism.
\end{proof}

Splicing the exact sequence of Lemmas~\ref{lem:N0}, \ref{lem:N1} and \ref{lem:M3} together, we get an exact sequence
\begin{equation}\label{eq:M3exact}
\xymatrix{0 \ar[r] &  M_3 \ar[r] & \sD_2 \ar[r]^-{\widetilde{\partial}_2} & \sD_1 \ar[r]^-{\partial_1} &  \sD_0 \ar[r]^-{\varepsilon}  & \W \ar[r] & 0}. 
\end{equation}
It remains to identify $M_3$ with $\W {\uparrow}^{P\G_{2}^1}_{PG_{48}'} $, where $PG_{48}' = P\pi G_{28}\pi^{-1}$.  This is the trickiest part of the argument. We will need the following observation.
\begin{rem}
Note that as  $PK^1$-modules 
\[\W {\uparrow}^{P\G_{2}^1}_{PG_{48}'} \cong \WG{PK^1}\]
and so, by \Cref{lem:PD}, we have
\begin{align}\label{eq:cohsparse}
H_n(PK^1, \W {\uparrow}^{P\G_{2}^1}_{PG_{48}'}) \cong \begin{cases}\W & n=0 \\
0 & \text{otherwise}.
\end{cases}\end{align}
\end{rem}

\begin{lemma}\label{lem:iota}
Let $\iota \colon PK^1 \to P\G_2^1$ be the inclusion and 
\[\iota^* \colon   H^3(P\G_2^1, \W {\uparrow}^{P\G_{2}^1}_{PG_{48}'}) \to H^3(PK^1, \W {\uparrow}^{P\G_{2}^1}_{PG_{48}'}  )\]
be the induced map. Then
 $\iota^*$ factors via an isomorphism
\[\xymatrix{ H^3(P\G_2^1, \W {\uparrow}^{P\G_{2}^1}_{PG_{48}'})  \ar[r]^-{\iota_0^*}_-{\cong} & H^3(PK^1, \W {\uparrow}^{P\G_{2}^1}_{PG_{48}'} )^{PG_{48}}   . }  \]
In particular, there is a commutative diagram
\[\xymatrix{  H^3(P\G_2^1, \W {\uparrow}^{P\G_{2}^1}_{PG_{48}'}) \ar[d]^{\cong}  \ar[r]^-{\iota^*} & H^3(PK^1, \W {\uparrow}^{P\G_{2}^1}_{PG_{48}'} )  \ar[d]^{\cong}  \\
\Z_2 =\W^{\Gal} \ar[r]^-{\subset} & \W .} \]
\end{lemma}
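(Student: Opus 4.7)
The plan is to apply the Lyndon--Hochschild--Serre spectral sequence to the extension
\[1 \to PK^1 \to P\G_2^1 \to PG_{48} \to 1\]
coming from the semidirect decomposition $P\G_2^1 \cong PK^1 \rtimes PG_{48}$ of \cref{warn:KnotGal}, with coefficients in $M := \W {\uparrow}^{P\G_{2}^1}_{PG_{48}'}$. Using the $PK^1$-module identification $M \cong \WG{PK^1}$ from the remark preceding the lemma together with \cref{lem:PD}, the cohomology $H^q(PK^1, M)$ vanishes for $q \neq 3$ and is isomorphic to $\W$ for $q = 3$. Consequently the $E_2$-page is concentrated on the row $q = 3$, the spectral sequence collapses, and for each $p \geq 0$ the edge morphism yields an isomorphism
\[H^{p+3}(P\G_2^1, M) \xrightarrow{\cong} H^p(PG_{48}, H^3(PK^1, M)).\]
Taking $p = 0$ gives the desired $\iota_0^*$, and by standard properties of Lyndon--Hochschild--Serre it factors the restriction $\iota^*$ through the canonical inclusion $H^3(PK^1, M)^{PG_{48}} \hookrightarrow H^3(PK^1, M)$.

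To complete the commutative square, I would next identify the $PG_{48}$-action on $H^3(PK^1, M) \cong \W$ as the natural Galois action. Since $PK^1$ is an orientable Poincar\'e duality group of dimension $3$, duality rewrites $H^3(PK^1, M) \cong M_{PK^1}$. Under the $PK^1$-module identification $M \cong \WG{PK^1}$, the coinvariants are canonically $\W$, and it remains only to determine the residual action of $P\G_2^1/PK^1 \cong PG_{48}$ on this copy of $\W$.

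By a direct computation---unwinding the bijection $PK^1 \cong P\G_2^1/PG_{48}'$ given by $k \mapsto kPG_{48}'$ and the Galois-twisted left action $g \cdot (w \otimes [hPG_{48}']) = w^{\phi(g)} \otimes [ghPG_{48}']$---the action of $g \in PG_{48}$ on $\W \cong M_{PK^1}$ will reduce modulo the $PK^1$-action to multiplication by $\phi(g) \in \Gal$. Hence the $PG_{48}$-action factors through $\phi \colon PG_{48} \twoheadrightarrow PG_{48}/PG_{24} \cong \Gal$ and agrees with the standard Galois action on $\W$. Therefore $H^3(PK^1, M)^{PG_{48}} = \W^{\Gal} = \Z_2$, and the inclusion into $H^3(PK^1, M) \cong \W$ is the canonical $\Z_2 \hookrightarrow \W$ appearing in the diagram.

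The hardest part will be the bookkeeping in the last step: one must track how left-multiplication by $PG_{48}$ interacts both with the coset identification $PK^1 \cong P\G_2^1/PG_{48}'$ (which rests on the parallel semidirect factorization $P\G_2^1 = PK^1 \rtimes PG_{48}'$, valid since $PK^1 = PK \cap P\G_2^1$ is normal in $P\G_2$) and with the coefficient twist $\phi$, and verify that the $PK^1$-action indeed collapses the group-ring factor upon passing to coinvariants. Everything else is a routine invocation of Lyndon--Hochschild--Serre together with the orientable Poincar\'e duality for $PK^1$ recalled in \cref{lem:PD}.
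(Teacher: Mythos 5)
Your first half — the Lyndon--Hochschild--Serre spectral sequence for $PK^1 \to P\G_2^1 \to PG_{48}$, the collapse to the row $q=3$ via the vanishing statement, and the identification of $\iota_0^*$ as the edge homomorphism — is exactly the paper's argument and is fine.

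Your second half, identifying the $PG_{48}$-action on $H^3(PK^1, M)$, takes a genuinely different route from the paper, and there is a gap in it. You propose to pass to coinvariants $H_0(PK^1, M) = M_{PK^1}$ via Poincar\'e duality and compute the $PG_{48}$-action there by an explicit coset calculation. The coset calculation itself is correct: on $M_{PK^1}\cong\W$ the residual action is $w\mapsto w^{\phi(g)}$. But you then silently transfer this to $H^3(PK^1,M)$ via the duality isomorphism $\,\cap\,[PK^1]\colon H^3(PK^1,M)\to H_0(PK^1,M)$, and this map is $PG_{48}$-equivariant only up to the orientation character of the conjugation action of $PG_{48}$ on $H_3(PK^1,\Z_2)\cong\Z_2$. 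If that character is nontrivial on the generator of $PG_{48}/PG_{24}\cong\Z/2$, then the $PG_{48}$-action on $H^3(PK^1,M)$ is $w\mapsto -w^{\phi(g)}$, and while the invariants are still abstractly $\cong\Z_2$, they are not the Galois-fixed subring $\W^{\Gal}\subset\W$ but the anti-fixed line, so the ``in particular'' diagram would fail as stated. You never rule this out. (The restriction of the orientation character to $PG_{24}\cong A_4$ is automatically trivial since $A_4$ has no index-two subgroup, so the ambiguity is exactly a single sign on the $\Z/2$-quotient.)

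The paper avoids the issue by working with $H^3$ directly and never invoking duality at this stage: it first observes the $PG_{24}$-equivariant splitting $H^3(PK^1,\W\uparrow)\cong H^3(PK^1,\Z_2\uparrow)\otimes_{\Z_2}\W$ and notes that $A_4\to\Z_2^\times$ is forced to be trivial (since $A_4^{ab}\cong\Z/3$ and $\Z_2^\times$ has no $3$-torsion), so $PG_{24}$ acts trivially; it then reads off that the remaining $PG_{48}/PG_{24}\cong\Gal$ acts through the coefficient twist. To repair your version you would need to show directly that conjugation by the chosen generator of $PG_{48}/PG_{24}$ (e.g.\ $[j-k]$) preserves the fundamental class of $PK^1$, or replace your final step with the paper's representation-theoretic argument, which settles the $PG_{24}$-part without ever touching Poincar\'e duality.
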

\begin{proof}
Recall from \Cref{warn:KnotGal} and  \Cref{rem:symmetricgroup}. that 
\[P\G_2^1 \cong PK^1 \rtimes PG_{48} .\]
Consider the spectral sequence associated to this group extension:
\begin{equation}\label{eq:Lyndon-HSSS}
H^p(PG_{48}, H^q( PK^1, \W {\uparrow}^{P\G_{2}^1}_{PG_{48}'} )) \Longrightarrow H^{p+q}(P\G_2^1, \W {\uparrow}^{P\G_{2}^1}_{PG_{48}'}).
\end{equation}
By  \eqref{eq:cohsparse}, $H^q( PK^1, \W {\uparrow}^{P\G_{2}^1}_{PG_{48}'} )$ is zero unless $q=3$. Therefore,
\begin{align*}
H^{3}(P\G_2^1, \W {\uparrow}^{P\G_{2}^1}_{PG_{48}'}) & \cong H^0( PG_{48} , H^3( PK^1, \W {\uparrow}^{P\G_{2}^1}_{PG_{48}'} )) .
\end{align*}
The map 
$\iota_0^*$ is the edge homomorphism of the spectral sequence \eqref{eq:Lyndon-HSSS}. This proves the first claim.

Now we analyze the action  of ${PG_{48}}$ on $H^3(PK^1, \W {\uparrow}^{P\G_{2}^1}_{PG_{48}'}) \cong \W$. There is a commutative diagram
\[ \xymatrix{1  \ar[r] &  PG_{24} \ar[d]  \ar[r] &  PG_{48}  \ar[d] \ar[r] & \Z/2   \ar[d]^-{\cong} \ar[r] & 1\\
1 \ar[r] & P\mathbb{S}_2(\Gamma) \ar[r] & P \mathbb{G}_2(\Gamma) \ar[r] & \Gal(\F_4/\F_2) \ar[r] & 1}\]
Note that $A_4 \cong PG_{24} \subseteq  P\mathbb{S}_2(\Gamma)$ for $A_4$ the alternating group on 4 letters. We get an isomorphism of $PG_{24}$-modules
\[H^3(PK^1,  \W {\uparrow}^{P\G_{2}^1}_{PG_{48}'}) \cong H^3(PK^1, {\Z_2}\!\!\uparrow_{PG_{48}'}^{P\GG_2^1}) \otimes_{\Z_2}\W\] with $PG_{24}$ acting trivially on $\W$.
The action of $PG_{24}$ on $H^3(PK^1, {\Z_2}\! \! \uparrow_{PG_{48}'}^{P\GG_2^1}) \cong \Z_2$ is trivial since there are no non-trivial $\Z_2[PG_{24}]$-modules whose underlying $\Z_2$-module is free of rank one.  It follows that
\begin{align*}
H^3(PK^1, \W {\uparrow}^{P\G_{2}^1}_{PG_{48}'})^{PG_{48}}  & \cong \W^{PG_{48}/PG_{24}} .
\end{align*}
The action of $PG_{48}/PG_{24}$ on $H^3(PK^1, \W {\uparrow}^{P\G_{2}^1}_{PG_{48}'})$ is induced by the action of $P\mathbb{G}_2^1$, and therefore, $PG_{48}/PG_{24}$ 
acts on $\W$ like the Galois group $\Gal$.
\end{proof}

\begin{lemma}
There are surjections
\begin{align*}
\eta &\colon \Hom_{\WG{P\G_2^1}}(M_3, \W {\uparrow}^{P\G_{2}^1}_{PG_{48}'}) \to H^3(P\G_2^1,\W {\uparrow}^{P\G_{2}^1}_{PG_{48}'} )  \\
\eta' &\colon\Hom_{\WG{PK^1}}(M_3, \W {\uparrow}^{P\G_{2}^1}_{PG_{48}'}) \to H^3(PK^1,\W {\uparrow}^{P\G_{2}^1}_{PG_{48}'} ) 
\end{align*}
such that the following diagram commutes
\[\xymatrix{ \Hom_{\WG{P\G_2^1}}(M_3, \W {\uparrow}^{P\G_{2}^1}_{PG_{48}'}) \ar[d]^-{\iota^*}\ar[r]^-{\eta} &H^3(P\G_2^1,\W {\uparrow}^{P\G_{2}^1}_{PG_{48}'} )  \ar[d]^-{\iota^{*}} \\  
\Hom_{\WG{PK^1}}(M_3, \W {\uparrow}^{P\G_{2}^1}_{PG_{48}'}) \ar[r]^-{\eta'} & H^3(PK^1,\W {\uparrow}^{P\G_{2}^1}_{PG_{48}'} ) ,
 }\] 
where $\iota^*$ are the maps induced by the inclusion $PK^1 \to P\G_2^1$.
\end{lemma}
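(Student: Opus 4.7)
The plan is to define both $\eta$ and $\eta'$ as Yoneda pushout maps (equivalently, iterated connecting homomorphisms) arising from the 3-fold extension \eqref{eq:M3exact}. Given $f \in \Hom_{\WG{G}}(M_3, N)$ for $G = P\G_2^1$ or $G = PK^1$, push out the sequence $0 \to M_3 \to \sD_2 \to \sD_1 \to \sD_0 \to \W \to 0$ along $f$ to obtain a 3-fold extension of $\W$ by $N$, representing a class in $\Ext^3_{\WG{G}}(\W, N) \cong H^3(G, N)$. The commutativity of the square is then immediate from the naturality of the Yoneda pushout under restriction of scalars along $\iota$.

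To prove surjectivity of $\eta'$, I would exploit that $P\G_2^1 \cong PK^1 \rtimes PG_{48}$ by \cref{warn:KnotGal}. Since $PK^1 \cap PG_{48} = \{e\}$, the subgroup $PK^1$ acts freely on $P\G_2^1/PG_{48}$, and since $PG_{12} \subseteq PG_{48}$, also freely on $P\G_2^1/PG_{12}$. Consequently, each of $\sD_0, \sD_1, \sD_2$ restricts to a free $\WG{PK^1}$-module, so $\Ext^j_{\WG{PK^1}}(\sD_i, N) = 0$ for $j \geq 1$ and $i = 0, 1, 2$. Splicing the long exact Ext sequences coming from \eqref{eq:M1}, \eqref{eq:M2}, and \eqref{eq:M3} makes each connecting homomorphism surjective, so $\eta'$ is surjective.

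For surjectivity of $\eta$, the semidirect decomposition yields $\Hom_{\WG{P\G_2^1}}(M_3, N) = \Hom_{\WG{PK^1}}(M_3, N)^{PG_{48}}$, where $PG_{48}$ acts by conjugation, and \cref{lem:iota} identifies $H^3(P\G_2^1, N) \cong \Z_2$ with $H^3(PK^1, N)^{PG_{48}} \subseteq \W$. Since $\eta'$ is $PG_{48}$-equivariant by naturality, $\eta$ is recovered as the restriction of $\eta'$ to $PG_{48}$-invariants, so it suffices to exhibit a $PG_{48}$-invariant preimage of the generator of $\Z_2$. Using $M_3 \cong \WG{PK^1}$ from \cref{lem:M3} together with $N|_{PK^1} \cong \WG{PK^1}$ (which holds because $PG_{48}'$ is also a complement to $PK^1$ in $P\G_2^1$: $PK^1 \cap PG_{48}' = \{e\}$ by the same semidirect-product argument, and $|PG_{48}'| = |PG_{48}|$ forces $P\G_2^1 = PK^1 \cdot PG_{48}'$), I would identify $\eta'$ with the augmentation $\varepsilon : \WG{PK^1} \to \W$ up to the Poincar\'e duality isomorphism $H^3(PK^1, \WG{PK^1}) \cong \W$. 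The unit $1 \in \WG{PK^1}$ is invariant under $PG_{48}$-conjugation and satisfies $\varepsilon(1) = 1$, which generates $\W^{PG_{48}} \cong \Z_2$, yielding the required preimage. The main obstacle is carefully verifying that under these identifications the map $\eta'$ really does coincide (in a $PG_{48}$-equivariant way) with the augmentation; once this bookkeeping is in place, the conclusion is formal.
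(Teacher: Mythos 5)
Your construction of $\eta$ and $\eta'$ as Yoneda pushouts, the commutativity of the square by naturality under restriction, and the surjectivity of $\eta'$ from the projectivity of the $\sD_i$ over $\WG{PK^1}$ all match the paper (which phrases the same facts via the edge homomorphism of a double-complex spectral sequence).

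The gap is in the surjectivity of $\eta$. The reduction to exhibiting a $PG_{48}$-invariant preimage of a generator of $H^3(PK^1,\W\!\uparrow^{P\G_2^1}_{PG_{48}'})^{PG_{48}}\cong\Z_2$ is correct, but your proposed preimage is not known to be invariant. You fix a $\WG{PK^1}$-isomorphism $M_3\cong\WG{PK^1}$ and a $\WG{PK^1}$-isomorphism $\W\!\uparrow^{P\G_2^1}_{PG_{48}'}\cong\WG{PK^1}$ and transport the identity map across; the resulting element of $\Hom_{\WG{PK^1}}(M_3,\W\!\uparrow^{P\G_2^1}_{PG_{48}'})$ is $PG_{48}$-invariant precisely when this composite $PK^1$-isomorphism $M_3\to\W\!\uparrow^{P\G_2^1}_{PG_{48}'}$ is already $P\G_2^1$-equivariant, which is essentially the conclusion of the very next theorem (the existence of $\varphi$). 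Likewise, your identification of $\eta'$ with the augmentation $\varepsilon\colon\WG{PK^1}\to\W$ is asserted but not proved, and you flag this yourself. Invariants do not in general commute with surjections, so absent either of these verifications the argument does not close. The paper avoids this circularity entirely: it proves surjectivity of $\eta$ by showing $E_1^{p,3-p}=0$ for $p=0,1,2$ in the relevant spectral sequence, which comes down to the cohomology vanishing $H^{3-p}(PG_{12},\W\!\uparrow^{P\G_2^1}_{PG_{48}'})=0$ for $p=1,2$ (using that $PC_6$ has odd order and $\W\cong\Z_2[\Gal]$ is projective) and $H^3(PG_{48},\W\!\uparrow^{P\G_2^1}_{PG_{48}'})=0$ (reducing to the vanishing of $H^3(PG_{24},\Z_2\!\uparrow^{P\SS_2^1}_{PG_{24}'})$ established in \cite{BeaudryRes}). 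You would need an argument of this kind, or some other input that does not presuppose equivariance of a chosen trivialization of $M_3$.
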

\begin{proof}
The maps $\eta$ and $\eta'$ are constructed as in the proof of \cite[Lemma 3.1.5]{BeaudryRes}.
We let $\mathcal{P}_p=\sD_p$ for $p=0,1,2$ and $\mathcal{P}_3=M_3$.
Consider the spectral sequences
\begin{align*}
\xymatrix{
E_1^{p,q} \cong \Ext_{\WG{P\G_2^1}}^q(\mathcal{P}_p, \W {\uparrow}^{P\G_{2}^1}_{PG_{48}'}) \ar[d]_-{\iota_*} \ar@{=>}[rr]  & & H^{p+q}(P\G_2^1, \W {\uparrow}^{P\G_{2}^1}_{PG_{48}'}) \ar[d]_-{\iota_*}  \\
F_1^{p,q} \cong \Ext_{\WG{PK^1}}^q(\mathcal{P}_p, \W {\uparrow}^{P\G_{2}^1}_{PG_{48}'})  \ar@{=>}[rr]  & &  H^{p+q}(PK^1, \W {\uparrow}^{P\G_{2}^1}_{PG_{48}'})}
\end{align*}
obtained from resolving the long exact sequence \eqref{eq:M3exact} into a double complex of projective $\WG{PK^1}$-modules. 
The differentials have degree $d_r \colon E_1^{p,q} \to E_1^{p+r, q-r+1}$ and $d_r \colon F_1^{p,q} \to F_1^{p+r, q-r+1}$. Then $\eta$ is the edge homomorphism
\[\eta \colon E_1^{3,0} \cong  \Hom_{\WG{P\G_2^1}}(M_3, \W {\uparrow}^{P\G_{2}^1}_{PG_{48}'})  \to   H^3(P\G_2^1,\W {\uparrow}^{P\G_{2}^1}_{PG_{48}'} ) \]
and $\eta'$ the edge homomorphism
\[\eta' \colon F_1^{3,0} \cong  \Hom_{\WG{PK^1}}(M_3, \W {\uparrow}^{P\G_{2}^1}_{PG_{48}'})  \to   H^3(PK^1,\W {\uparrow}^{P\G_{2}^1}_{PG_{48}'} ) \]
Since $\sD_p$ are projective $\WG{PK^1}$-modules, $F^{*,q}=0$ for $q>0$ so the edge homomorphism $\eta'$ is surjective.

To prove that $\eta$ is surjective, one sets up the argument exactly as in \cite[Lemma 3.1.5]{BeaudryRes}, replacing $\Z_2[\![P\mathbb{S}_2^1]\!]$ by $\WG{P\G_2^1}$. 
It suffices to prove that $E_1^{p,3-p}=0$ for $p=0,1,2$. 

Following the logic of that argument, we note that for $p=1,2$
\begin{align*}
E_1^{p,3-p} &\cong \Ext_{\WG{P\G_2^1}}^{3-p}(\W {\uparrow}^{P\G_{2}^1}_{PG_{12}},\W {\uparrow}^{P\G_{2}^1}_{PG_{48}'} ) \\
&\cong H^{3-p}(PG_{12}, \W {\uparrow}^{P\G_{2}^1}_{PG_{48}'}  ) .
\end{align*}
Since $PC_6 \cong C_3$ has order prime to $2$, we have an isomorphism
\[H^{3-p}(PG_{12}, \W {\uparrow}^{P\G_{2}^1}_{PG_{48}'}  )  \cong H^{3-p}(\Gal, (\W {\uparrow}^{P\G_{2}^1}_{PG_{48}'} )^{PC_6}).\]
Since $ (\W {\uparrow}^{P\G_{2}^1}_{PG_{48}'} )^{PC_6} \cong (\Z_2{\uparrow}^{P\G_{2}^1}_{PG_{48}'})^{PC_6} \otimes \W$ is a projective Galois module and $3-p>0$, these groups vanish and we conclude that
$E_1^{p,3-p} =0$ when $p=1,2$.

So, to show that the edge homomorphism is surjective, it's enough to prove that
\[E_1^{0,3} = \Ext_{\WG{P\G_2^1}}^3(\W {\uparrow}^{P\G_{2}^1}_{PG_{48}},\W {\uparrow}^{P\G_{2}^1}_{PG_{48}'} )=0.\]
First, we note 
\begin{align*}
E_1^{0,3} & \cong H^3(PG_{48}, \W {\uparrow}^{P\G_{2}^1}_{PG_{48}'} ).
\end{align*}
We use the extension $PG_{24} \to PG_{48} \to \Gal$. We have a spectral sequence
\[H^s( \Gal , H^r(PG_{24} , \W {\uparrow}^{P\G_{2}^1}_{PG_{48}'})) \Rightarrow H^{s+r}(PG_{48}, \W {\uparrow}^{P\G_{2}^1}_{PG_{48}'}) \]
and isomorphisms
\[ H^r(PG_{24} , \W {\uparrow}^{P\G_{2}^1}_{PG_{48}'}) \cong H^r(PG_{24} , \Z_2 {\uparrow}^{P\G_{2}^1}_{PG_{48}'})\otimes_{\Z_2} \W. \]
But as a $\Gal$-module,  $\W \cong \Z_2[\Gal]$, and using the fact that the induced and co-induced modules are isomorphic for finite groups, we get isomorphisms
\[H^s( \Gal , H^r(PG_{24} , \W {\uparrow}^{P\G_{2}^1}_{PG_{48}'}))  \cong \begin{cases}
H^r(PG_{24} , \Z_2 {\uparrow}^{P\G_{2}^1}_{PG_{48}'}) & s=0\\
0 & s>0.
\end{cases}\]
We thus get an isomorphism
\begin{align}\label{eq:S3iso}
H^{3}(PG_{48}, \W {\uparrow}^{P\G_{2}^1}_{PG_{48}'}) \cong  H^{3}(PG_{24} , \Z_2 {\uparrow}^{P\G_{2}^1}_{PG_{48}'}) .
\end{align}
However, as left $PG_{24}$-modules, $\Z_2 {\uparrow}^{P\G_2^1}_{PG_{48}'}  \cong \Z_2 {\uparrow}^{P\mathbb{S}_2^1}_{PG_{24}'}$ so
\[H^3(PG_{24}, \Z_2 {\uparrow}^{P\G_2^1}_{PG_{48}'}) \cong  H^3(PG_{24}, \Z_2 {\uparrow}^{P\mathbb{S}_2^1}_{PG_{24}'}) .\]
In the last part of the proof of Lemma 3.1.5 in \cite{BeaudryRes}, this last module is shown to be trivial.
\end{proof}

\begin{theorem}
There is an isomorphism of $\WG{P\G_2^1}$-modules
\[ \varphi  \colon M_3 \to \W {\uparrow}^{P\G_{2}^1}_{PG_{48}'} .\]
\end{theorem}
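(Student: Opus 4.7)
The plan is to produce $\varphi$ using the surjection $\eta$ from the previous lemma and then verify it is an isomorphism via Nakayama, exploiting that both $M_3$ and $\W{\uparrow}^{P\G_{2}^1}_{PG_{48}'}$ are free $\WG{PK^1}$-modules of rank one.

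First, I would apply surjectivity of $\eta$ to pick $\varphi \in \Hom_{\WG{P\G_2^1}}(M_3, \W{\uparrow}^{P\G_{2}^1}_{PG_{48}'})$ with $\eta(\varphi)$ a generator of $H^3(P\G_2^1, \W{\uparrow}^{P\G_{2}^1}_{PG_{48}'}) \cong \Z_2$. The commutative square from the previous lemma together with \cref{lem:iota} then ensures that $\eta'(\iota^*\varphi) \in H^3(PK^1, \W{\uparrow}^{P\G_{2}^1}_{PG_{48}'}) \cong \W$ is a unit of $\W$ (in fact a generator of $\Z_2 \subseteq \W$).

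Next, I would verify that $\W{\uparrow}^{P\G_{2}^1}_{PG_{48}'}$ is free of rank one as a $\WG{PK^1}$-module. Since $PK^1$ is pro-$2$ of finite cohomological dimension by \cref{lem:PD} it is torsion-free, so no nontrivial conjugate of a $PK^1$-element can land in the finite group $PG_{48}'$; this gives freeness of the $PK^1$-action on $P\G_2^1/PG_{48}'$. Transitivity follows from a Haar-measure count using $P\G_2^1 \cong PK^1 \rtimes PG_{48}$ and $|PG_{48}| = |PG_{48}'|$. Combined with \cref{lem:M3}, I may then fix generators $a \in M_3$ and $b \in \W{\uparrow}^{P\G_{2}^1}_{PG_{48}'}$ over $\WG{PK^1}$ and write $\varphi(a) = \lambda b$ for some $\lambda \in \WG{PK^1}$.

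The concluding step is to relate $\eta'(\iota^*\varphi) \in \W$ to $\varepsilon(\lambda) \in \W$, where $\varepsilon \colon \WG{PK^1} \to \W$ is the augmentation. Naturality of Poincar\'e duality for the orientable pro-$2$ duality group $PK^1$ identifies $\varphi_* \colon H^3(PK^1, M_3) \to H^3(PK^1, \W{\uparrow}^{P\G_{2}^1}_{PG_{48}'})$ with $H_0(PK^1, \varphi)$, and on $H_0$ the map sends $\bar a$ to $\overline{\lambda b} = \varepsilon(\lambda)\bar b$. Thus, up to a fixed unit coming from the orientation class, $\eta'(\iota^*\varphi) = \varepsilon(\lambda)$, so $\varepsilon(\lambda)$ is a unit in $\W$. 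Since $PK^1$ is pro-$2$, $\WG{PK^1}$ is a complete local ring with maximal ideal $\mathfrak{m}$ equal to the kernel of $\WG{PK^1} \xrightarrow{\varepsilon} \W \to \F_4$, so $\lambda \notin \mathfrak{m}$ and $\lambda$ is a unit in $\WG{PK^1}$. Therefore $\varphi$ is an isomorphism of $\WG{PK^1}$-modules, and since it was chosen $\WG{P\G_2^1}$-equivariantly, it is the desired isomorphism of $\WG{P\G_2^1}$-modules. The main obstacle is setting up the Poincar\'e duality identification cleanly enough to conclude the equality $\eta'(\iota^*\varphi) = \varepsilon(\lambda)$ up to a unit; the remaining ingredients are either formal or invocations of previously established results.
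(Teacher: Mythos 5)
Your proposal is correct and takes essentially the same route as the paper: use surjectivity of $\eta$ to choose $\varphi$ landing in the Galois-invariant part of $H^3(P\G_2^1,\W{\uparrow}^{P\G_2^1}_{PG_{48}'})\cong\Z_2$, pass to $PK^1$ via the commutative $\eta/\eta'$ square and \cref{lem:iota}, translate across Poincar\'e duality for $PK^1$ to a statement about $H_0(PK^1,-)$, and conclude by a Nakayama-type argument over the local ring $\WG{PK^1}$. The only cosmetic difference is that you make the rank-one freeness of $\W{\uparrow}^{P\G_2^1}_{PG_{48}'}$ over $\WG{PK^1}$ and the generator $\lambda$ explicit, whereas the paper packages the same reduction as an evaluation map $\ev$ onto $H_0(PK^1,\W{\uparrow}^{P\G_2^1}_{PG_{48}'})$ and invokes Lemma~4.4 of \cite{HennCent}; the content is identical.
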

\begin{proof}
The proof is similar to that of Theorem 3.1.6 of \cite{BeaudryRes}. Remark that $PK^1$ is an oriented Poincar\'e duality group of dimension $3$. It follows that there is a natural isomorphism
\[ \xymatrix{H^{3-*}(PK^1, M) \ar[r]^-{\cap [PK^1] } & H_*(PK^1, M)}\]
for a choice of generator $[PK^1] \in H_3(PK^1, \Z_2) \cong \Z_2$ and a complete $\Z_2[\![PK^1]\!]$-module $M$. We let $[PK^1]$ also denote the image of this class in $H_3(PK^1,\W)$. Note further that for $M \cong \W \otimes_{\Z_2} M_1$ where $M_1$ is a $\ZG{PK^1}$-module with $PK^1$ acting diagonally (and therefore trivially on $\W$), $\cap [PK^1]$ is $\W$--linear. 

We let
\[\nu \colon H_3(PK^1, \W ) \to \W \otimes_{\WG{PK^1}} M_3\]
be the edge homomorphism of the spectral sequence 
\[\Tor^{\WG{PK^1}}_q( \W, \sD_p) \Longrightarrow H_{p+q}(PK^1, \W)\]
obtained from the long exact sequence \eqref{eq:M3exact}
and let 
\[ \Hom_{\W}(\W \otimes_{\WG{PK^1}}M_3,   \W \otimes_{\WG{PK^1}} \W{\uparrow}^{P\mathbb{G}_2^1 }_{PG_{48}'} ) \xrightarrow{\ev} H_0(PK^1, \W{\uparrow}^{P\mathbb{G}_2^1 }_{PG_{48}'} ) \]
be given by
\[\ev(f) =f(\nu([PK^1])). \]
By definition of the cap product, we have a commutative diagram
\[\xymatrix{ \Hom_{\WG{P\mathbb{G}_2^1}}(M_3,   \W{\uparrow}^{P\mathbb{G}_2^1 }_{PG_{48}'}  )  \ar@{->>}[r]^-{\eta} \ar[d]_-{\iota^*} & H^3(P\mathbb{G}_2^1,  \W{\uparrow}^{P\mathbb{G}_2^1 }_{PG_{48}'} ) \ar[d]^-{\iota^*}   \\
 \Hom_{\WG{PK^1}}(M_3,   \W{\uparrow}^{P\mathbb{G}_2^1 }_{PG_{48}'})  \ar@{->>}[r]^-{\eta'} \ar[d]_-{\W \otimes_{\WG{PK^1}} -} & H^3(PK^1,  \W{\uparrow}^{P\mathbb{G}_2^1 }_{PG_{48}'}) \ar[d]^-{\cap [PK^1]}_-{\cong}   \\
 \Hom_{\W}(\W \otimes_{\WG{PK^1}}M_3,   \W \otimes_{\WG{PK^1}} \W{\uparrow}^{P\mathbb{G}_2^1 }_{PG_{48}'}) \ar[r]^-{\ev}_-{\cong}   & H_0(PK^1, \W{\uparrow}^{P\mathbb{G}_2^1 }_{PG_{48}'}  ).   } \]
The map $\ev$ is an isomorphism. Indeed, it is a surjective morphism of $\W$-modules since $\eta'$ and $\cap [PK^1]$ are surjective and both sides are abstractly isomorphic to $\W$. 
 
 For a map $\varphi$, if $\W\otimes_{\WG{PK^1}} \varphi $ is an isomorphism, then Lemma 4.4 of \cite{HennCent} implies that $\varphi \in \Hom_{\WG{PK^1}}(M_3,   \W{\uparrow}^{P\mathbb{G}_2^1 }_{PG_{48}'} ) $ is an isomorphism. Choose a unit 
 \[\varphi_0 \in H_0(PK^1, \W{\uparrow}^{P\mathbb{G}_2^1 }_{PG_{48}'}  )^{\Gal} \cong \Z_2.\] 
 Since $(\cap [PK^1])\circ \iota^*\circ \eta $ surjects onto the Galois invariants, there is a 
 \[\varphi \in \Hom_{\WG{P\mathbb{G}_2^1}}(M_3,   \W{\uparrow}^{P\mathbb{G}_2^1 }_{PG_{48}'}  )\] such that $(\cap [PK^1]) \iota^* \eta (\varphi) = \varphi_0$. Since $\ev^{-1}(\varphi_0)= \W \otimes_{\WG{PK^1}} \iota^*(\varphi)$ is an isomorphism, $\iota^*(\varphi)$ is an isomorphism, and hence so was $\varphi$.
\end{proof}

\begin{defn}
Let $\sD_3 =  \W{\uparrow}^{P\mathbb{G}_2^1 }_{PG_{48}'}$.
\[ \widetilde{\partial}_3 \colon \sD_3  \to  \sD_2 \]
be the composition of $\varphi^{-1} \colon   \sD_3 \to M_3 $ with the inclusion of $M_3$ in $ \sD_2$.
\end{defn}

As the notation suggests, the map $\widetilde{\partial}_3$ is temporary and our next goal is to give a better description of the last map.
To do this, we address the duality of the resolution. Just as in \cite[\S3.3]{BeaudryRes} we have an involution
\[c_{\pi} \colon \Mod^\phi(P\GG_2^1) \to  \Mod^\phi(P\GG_2^1) \]
which twists the $P\GG_2^1$-action on a module by conjugation by $\pi$. That is, if $M$ is a Galois-twisted $\WG{P\GG_2^1}$-module, then $c_\pi(M)$ is the module whose $\W$-module is equal to $M$, but where the action is given by
\[a \cdot m = \pi a\pi^{-1}m.\]
If $\varphi \colon M \to N$ is a morphism, let 
\[c_\pi(\varphi) (m) = \varphi(  m).\]
For a Galois-twisted module $M$, we let 
\[M^* = \Hom_{\W[\![P\GG_2^1]\!]}(M, \W[\![P\GG_2^1]\!]).\]
This is a $\W[\![P\GG_2^1]\!]$-module where the action on a function $f$ is given by  $(a f)(x) = f(x)ae$ for $a\in \W$ and 
$(gf)(x) = f(x)g^{-1}$ for $g\in P\GG_2^1$. 
If $M = \W\!\uparrow_{H}^{P\GG_2^1}$ for $H$ a finite subgroup of $P\GG_2^1$, then there is an isomorphism
\[ \W\!\uparrow_{H}^{P\GG_2^1} \to \left(\W\!\uparrow_{H}^{P\GG_2^1}\right)^* \]
which is the map of  $\W[\![P\GG_2^1]\!]$-modules which sends the coset $[g]$ to
\begin{align}\label{eq:dualformula}
[g]^*(x) = x \sum_{h\in H} hg^{-1}.
\end{align}

To compute the last map, one uses the duality of the resolution, which we state here. The proof is completely analogous to that of \cite[Theorem 3.3.1]{BeaudryRes}.
\begin{theorem}
There exists isomorphisms of complexes of left $\W[\![P\GG_2^1]\!]$-modules
\[\xymatrix@C=1.5pc{0 \ar[r] & \sD_3 \ar[r]^-{\widetilde{\partial}_3} \ar[d]^-{f_3} & \sD_2 \ar[r]^-{\widetilde{\partial}_2} \ar[d]^-{f_2}  &  \sD_1 \ar[r]^-{\partial_1} \ar[d]^-{f_1}  &  \sD_0 \ar[r] \ar[d]^-{f_0} & \W \ar[r] \ar[d]^-{=} & 0 \\
0\ar[r] &  c_\pi((\sD_0)^*) \ar[r]^-{c_\pi(\partial_1^*)}&  c_\pi((\sD_1)^*)  \ar[r]^-{c_\pi(\widetilde{\partial}_2^*)}&  c_\pi((\sD_2)^*)  \ar[r]^-{c_\pi(\widetilde{\partial}_3^*)}&  c_\pi((\sD_3)^*) \ar[r] & \W \ar[r] &0  .
 }\]
\end{theorem}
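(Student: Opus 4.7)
The plan is to adapt the proof of \cite[Theorem 3.3.1]{BeaudryRes} to the Galois-twisted setting. The first step is to identify each term in the bottom row as an induced module of the form $\W {\uparrow}_H^{P\GG_2^1}$. Using the explicit formula \eqref{eq:dualformula}, for any finite subgroup $H \subseteq P\GG_2^1$ there is a canonical isomorphism $(\W {\uparrow}_H^{P\GG_2^1})^* \cong \W {\uparrow}_H^{P\GG_2^1}$ of left $\WG{P\GG_2^1}$-modules, and twisting by $c_\pi$ yields
\[ c_\pi\bigl((\W {\uparrow}_H^{P\GG_2^1})^*\bigr) \cong \W {\uparrow}_{\pi H \pi^{-1}}^{P\GG_2^1}. \]
Thus $c_\pi((\sD_0)^*) \cong \sD_3$ directly from the definition $PG_{48}' = \pi PG_{48}\pi^{-1}$, and $c_\pi((\sD_3)^*) \cong \sD_0$. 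For $i=1,2$, one verifies that $\pi PG_{12}\pi^{-1}$ is conjugate to $PG_{12}$ in $P\GG_2^1$, producing isomorphisms $c_\pi((\sD_i)^*) \cong \sD_{3-i}$.

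The second step is to show that the bottom row is itself a resolution of $\W$. Since $PK^1$ is an orientable Poincar\'e duality group of dimension $3$ by \cref{lem:PD} and has finite index in $P\GG_2^1$, applying $\Hom_{\WG{P\GG_2^1}}(-,\WG{P\GG_2^1})$ to the top row produces a complex with cohomology concentrated in degree $3$ and equal to $\W$ there. Shifting and applying $c_\pi$ yields precisely the bottom row, so it is a projective resolution of $\W$.

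The third step is the construction of $f_\bullet$. Both rows are projective resolutions of $\W$ lifting the identity, so by the comparison theorem a chain map $f_\bullet$ exists, unique up to chain homotopy. To promote this map to a level-wise isomorphism, I would choose representatives so that $f_i$ sends the canonical generator $e_i \in \sD_i$ to a generator of the cyclic target $c_\pi((\sD_{3-i})^*)$; concretely, one first fixes $f_0$ and $f_3$ on generators using the canonical identifications of the first step, and then adjusts $f_1$ and $f_2$ within their homotopy classes. The Nakayama-type Lemma 4.4 of \cite{HennCent} then implies that each $f_i$ is an isomorphism, since both source and target are cyclic $\WG{P\GG_2^1}$-modules and $f_i(e_i)$ generates the target modulo the maximal ideal of $\WG{PK^1}$.

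The hard part will be the careful bookkeeping of Galois twists and $\pi$-conjugations across these identifications, together with the fact that the middle differential $\widetilde{\partial}_2$ is not given by an explicit formula but only characterized modulo the ideal $\Iideal$ of \eqref{defn:idealI}. Since the comparison theorem depends only on the chain-homotopy class of $\sD_\bullet$, this ambiguity does not obstruct the existence of $f_\bullet$, but verifying that the chosen generators are genuinely compatible with all four differentials simultaneously will require tracking the commutation relations from \cref{rem:taucom} and the formula \eqref{eq:thetaphi} carefully through the duality.
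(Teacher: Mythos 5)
Your three-step outline (identify $c_\pi((\sD_{3-i})^*)\cong\sD_i$, show the dual complex is exact, build $f_\bullet$ and upgrade to an isomorphism via Nakayama) does match the shape of the argument in \cite[Theorem 3.3.1]{BeaudryRes}, to which the paper delegates this proof. The module identifications in your first step are correct: using \eqref{eq:dualformula}, $c_\pi(\W{\uparrow}^{P\GG_2^1}_H)\cong\W{\uparrow}^{P\GG_2^1}_{\pi^{-1}H\pi}$, and since $\pi^2=-3$ is central this is $\W{\uparrow}^{P\GG_2^1}_{\pi H\pi^{-1}}$; for $H=PG_{12}$ one even has $\pi PG_{12}\pi^{-1}=PG_{12}$ on the nose, since $\pi$ commutes with $\omega$ and $\pi[j-k]\pi^{-1}=(\pi/\pi^\sigma)^{-1}[j-k]=-[j-k]=[j-k]$ in $P\GG_2^1$.

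However, your third step contains a genuine gap. The modules $\sD_p=\W{\uparrow}^{P\GG_2^1}_H$ with $H\in\{PG_{48},PG_{12},PG_{48}'\}$ are \emph{not} projective over $\WG{P\GG_2^1}$: $\sD_p\cong\WG{P\GG_2^1}\otimes_{\W[H]}\W$, and since $|H|$ is even, $\W$ is not projective over $\W[H]$. So neither row is a projective resolution, and the comparison theorem for projective resolutions does not produce the chain map $f_\bullet$ or the uniqueness up to homotopy that you invoke. The $\sD_p$ are only free over $\WG{PK^1}$. The construction of $f_\bullet$ therefore has to be done by hand: one fixes $f_0$ as the canonical isomorphism $\sD_0\to c_\pi((\sD_3)^*)$, then inductively lifts $f_{p}\partial_{p+1}$ through the (now-established) exact bottom row, exploiting that each $\sD_{p+1}$ is cyclic so the lifting problem reduces to finding an $H_{p+1}$-invariant preimage of a single element; Lemma~4.4 of \cite{HennCent} is then what shows each lift can be taken to be an isomorphism after checking it is one modulo the maximal ideal, by comparing generators in $H_0(PK^1,-)$. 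This inductive, Nakayama-driven construction is the content of \cite[Theorem~3.3.1]{BeaudryRes} and cannot be replaced by an appeal to the ordinary comparison theorem. Your second step also understates the work needed: the identification of $H^3$ of the dual complex as $\W$ (with the correct $P\GG_2^1$-action) requires identifying the dualizing module of $P\GG_2^1$, which involves the same kind of Lyndon--Hochschild--Serre and orientability analysis carried out in \cref{lem:iota} rather than following directly from \cref{lem:PD} and finite index.
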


\begin{lem}\label{lem:delta3prime}
Let $e_3$ be the canonical generator of $ \sD_3 $. There are automorphisms $g_2 \colon \sD_2 \to \sD_2$ and $g_3 \colon \sD_3 \to \sD_3$ such that
\[\widetilde{\partial_3}= g_2^{-1} \partial_3' g_3 \]
for  $\partial_3' \colon \sD_3 \to \sD_2$ defined by
\[\partial_3'(e_3)  = \tr_\sigma(\delta_3)e_2=  \tr_{\sigma}( \pi(e+i+j+k)(e-\alpha^{-1})\pi^{-1}) e_2 . \]
Here, $\delta_3 =  \pi(e+i+j+k)(e-\alpha^{-1})\pi^{-1}$.
\end{lem}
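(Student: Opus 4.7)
The plan is to deduce the formula for $\widetilde{\partial}_3$ from the duality theorem just proved, using the explicit description $\partial_1(e_1) = \tr_\sigma(e-\alpha)e_0$. The rightmost square of the duality diagram reads $f_2 \widetilde{\partial}_3 = c_\pi(\partial_1^*) \circ f_3$, so after constructing identifications $\psi_3 \colon c_\pi(\sD_0^*) \cong \sD_3$ and $\psi_2 \colon c_\pi(\sD_1^*) \cong \sD_2$ it will suffice to compute $\psi_2 \circ c_\pi(\partial_1^*) \circ \psi_3^{-1}$ and recognize the result as $\partial_3'$. Setting $g_3 = \psi_3 \circ f_3$ and $g_2 = \psi_2 \circ f_2$ then yields the claimed factorization $\widetilde{\partial}_3 = g_2^{-1}\partial_3'g_3$.

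For the two identifications, I will invoke \eqref{eq:dualformula} to get $(\W\!\uparrow_H^{P\GG_2^1})^* \cong \W\!\uparrow_H^{P\GG_2^1}$ for any finite subgroup $H$. Applying $c_\pi$ replaces the stabilizing subgroup $H$ by $\pi^{-1}H\pi$, and since $\pi^2 = -3$ is a scalar in $\W^\times$ and hence central in $P\GG_2^1$, we have $\pi^{-1}H\pi = \pi H\pi^{-1}$. For $H = PG_{48}$ this gives exactly $PG_{48}'$, producing $\psi_3$. For $H = PG_{12}$, I check that $\pi$ normalizes $PG_{12}$: the generator $-\omega \in \W^\times$ commutes with $\pi$; for $[j-k]$, the relations $\xi\pi = \pi^\sigma \xi$, $\pi\pi^\sigma = 3$ and $\pi^2 = -3$ give $\pi^\sigma/\pi = -1$, hence $\pi\xi\pi^{-1} = -\xi$, and so $\pi[j-k]\pi^{-1} = [j-k]$ in $P\GG_2^1$. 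This produces $\psi_2$.

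With the identifications in hand, I will compute $\psi_2 \circ c_\pi(\partial_1^*) \circ \psi_3^{-1}$ on $e_3$ by unwinding \eqref{eq:dualformula}. The image of $e_3$ under $\psi_3^{-1}$ is the dualization of $e_0$, which is the $P\GG_2^1$-map sending $e_0 \mapsto \sum_{h \in PG_{48}}h$; precomposing with $\partial_1$ gives the element of $\sD_1^*$ mapping $e_1$ to $\tr_\sigma(e-\alpha)\sum_{h\in PG_{48}}h$. Decomposing $\sum_{h \in PG_{48}}h = (\sum_{g \in PG_{48}/PG_{12}}g) \Sigma_{PG_{12}}$ with coset representatives $\{e,i,j,k\}$ and rewriting in the basis $[g]^*$ of $\sD_1^*$ (where $[g]^*(e_1) = \Sigma_{PG_{12}}g^{-1}$) turns the answer into a multiple of $(e+i+j+k)(e-\alpha^{-1})e_2$ inside the Galois trace; the inversion $\alpha \mapsto \alpha^{-1}$ is precisely the $g \mapsto g^{-1}$ built into \eqref{eq:dualformula} (and uses that $\alpha^\sigma = -\alpha^{-1}$ becomes $\alpha^{-1}$ in $P\GG_2^1$ where $-1$ is trivial). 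Finally, transporting back through $c_\pi$ conjugates everything by $\pi$, producing $\pi(e+i+j+k)(e-\alpha^{-1})\pi^{-1} = \delta_3$ and the desired formula $\partial_3'(e_3) = \tr_\sigma(\delta_3)e_2$.

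The main obstacle will be the bookkeeping in the last step: carefully verifying that $\{e,i,j,k\}$ is a legitimate set of left coset representatives for $PG_{48}/PG_{12}$, that the passage from $\sum_g w_g \Sigma_{PG_{12}}g^{-1}$ to the standard form on the dual basis matches $(e+i+j+k)(e-\alpha^{-1})$ without spurious terms, and that the Galois trace $\tr_\sigma$ and the $c_\pi$-twist commute with these manipulations cleanly. The overall architecture is parallel to \cite[Theorem 3.3.1]{BeaudryRes}, but the presence of the Galois-twisted coefficients means one must consistently keep track of $\sigma$ on the $\W$-side throughout.
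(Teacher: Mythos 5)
Your proposal is correct and takes essentially the same approach as the paper: set up the duality diagram, construct identifications $q_p \colon c_\pi((\sD_{3-p})^*) \to \sD_p$ from \eqref{eq:dualformula}, compute $\partial_1^*$ explicitly, and chase through the diagram to read off $\partial_3'$, setting $g_p = q_pf_p$. Your explicit verification that $\pi$ normalizes $PG_{12}$ (so that $c_\pi(\sD_1^*)\cong\sD_2$ is well-defined) is a useful check the paper leaves implicit.
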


\begin{proof}
The argument mirrors the proof of Theorem 3.4.1 in \cite{BeaudryRes}. 
We have isomorphisms
\[ q_p \colon \sD_{p}  \xrightarrow{\cong} (\sD_{p})^* \]
which sends $e_p$ to $e_p^*$ as in \eqref{eq:dualformula}. We call the element corresponding to  $e_p^*$ in $c_\pi((\sD_p)^*) $ by $e_p^\pi$.
We will construct a commutative diagram
\[\xymatrix@C=1.5pc{0 \ar[r] & \sD_3 \ar[r]^-{\widetilde{\partial}_3} \ar[d]^-{f_3} & \sD_2 \ar[r]^-{\widetilde{\partial}_2} \ar[d]^-{f_2}  &  \sD_1 \ar[r]^-{\partial_1} \ar[d]^-{f_1}  &  \sD_0 \ar[r] \ar[d]^-{f_0} & \W \ar[r] \ar[d]^-{=} & 0 \\
0\ar[r] &  c_\pi((\sD_0)^*) \ar[r]^-{c_\pi(\partial_1^*)} \ar[d]^-{q_3} &  c_\pi((\sD_1)^*)  \ar[r]^-{c_\pi(\widetilde{\partial}_2^*)} \ar[d]^-{q_2} &  c_\pi((\sD_2)^*)  \ar[r]^-{c_\pi(\widetilde\partial_3^*)} \ar[d]^-{q_1} &  c_\pi((\sD_3)^*) \ar[r] \ar[d]^-{q_0} & \W \ar[r] \ar[d]^-{=} &0  \\
0\ar[r] &  \sD_3  \ar[r]^-{\partial_3'} &  \sD_2 \ar[r]^-{\partial_2'} &  \sD_1 \ar[r]^-{\partial_1'}&  \sD_0   \ar[r] & \W \ar[r] &0  
 }\]
with the vertical maps
\[q_p \colon  c_\pi((\sD_{3-p})^*)  \to \sD_p \]
defined  by
\[q_p(e_{3-p}^\pi)=e_p\]
and $\partial_{p+1}'\colon   \sD_{p+1}  \to  \sD_p $ by 
\[\partial_{p+1}' = q_pc_\pi(\partial_{3-p}^*)q_{p+1}^{-1}.\]
By construction, our diagram is commutative. To compute $\partial_3'$, we need to compute $\partial_1^*$, which we do using \eqref{eq:dualformula}:
\begin{align*}
\partial_1^*(e_0^*)(e_1) &= e_0^*\circ \partial_1(e_0)  \\
 &= e_0^*( (-\zeta (e-\alpha)- \zeta^{\sigma} (e-\alpha^{\sigma})  )e_0)  \\
  &=  -\zeta (e-\alpha) e_0^*(e_0)- \zeta^{\sigma} (e-\alpha^{\sigma})  e_0^*(e_0)  \\
&=(-\zeta (e-\alpha)- \zeta^{\sigma} (e-\alpha^{\sigma}) ) \sum_{h\in PG_{48}} h \\
&= \sum_{h\in PG_{12} }  h (-\zeta (e-\alpha)- \zeta^{\sigma} (e-\alpha^{\sigma}) )(e+i^{-1}+j^{-1}+k^{-1}) \\
&=((e+i+j+k)(-\zeta (e-\alpha^{-1})- \zeta^{\sigma} (e-(\alpha^{-1})^{\sigma} )e_1^*)(e_1).
\end{align*}
Therefore,
\begin{align*}
 \partial_1^*(e_0^*)&=(e+i+j+k)(-\zeta (e-\alpha^{-1})- \zeta^{\sigma} (e-(\alpha^{-1})^{\sigma} )e_1^*  \\
 &=(e+i+j+k)\tr_{\sigma}(e-\alpha^{-1})e_1^* 
 \end{align*}
From this and a diagram chase, we get the formula
\[\partial_3'(e_3)  = \pi(e+i+j+k)  \tr_{\sigma}(e-\alpha^{-1})\pi^{-1} e_2 . \]
The identities $\pi\pi^{\sigma}=3$ and $\pi^2=-3$ imply that $\pi, \pi^\sigma, \pi^{-1}$ and $(\pi^\sigma)^{-1}$ differ by central elements, and so their conjugation action agrees.
Since $(e+i+j+k)^\sigma = (e+i+j+k)$ in $\WG{P\G_2^1}$, we deduce that
\[  \tr_\sigma(\delta_3) = \pi(e+i+j+k)\tr_\sigma(e-\alpha^{-1}) \pi^{-1}.\]
This establishes the claim for $\partial_3'$.
Now, letting $g_p= q_pf_p$, we have
\[\widetilde{\partial}_3 = g_2^{-1} \partial_3' g_3. \qedhere\]
\end{proof}

\begin{defn}
Let $\partial_3 \colon \sD_3 \to \sD_2$ be defined as $\partial_3=\partial_3'$ and $\partial_2\colon \sD_2 \to \sD_1$ as $\partial_2 = \widetilde{\partial}_2g_2^{-1}$.
\end{defn}
\noindent
This completes the construction of the resolution.

In the next section, we will want to compare this resolution with that of \cite{BeaudryRes}. To do this, we note
 that in \cite{BeaudryRes}, the author could have proved the analogous result as above by redefining, as in the notation of that paper, the last map as $\partial_3'$ and the middle map as $\partial_2g_2^{-1}$ for $g_2$ now as in \cite[Theorem 3.4.1]{BeaudryRes}. We wish to state this here, but to do so, first recall:
 \begin{lem}[{\cite[Lem. 3.1.3]{BeaudryRes}}]\label{thm:thetaall}
There is an $\thetaclass \in\Z_2[\![P\mathbb{S}_2^1]\!]$ which satisfies the following three conditions:
\begin{enumerate}[(1)]
\item $\tau \thetaclass =\thetaclass\tau$ for all $\tau \in PC_6$,
\item $\thetaclass \delta_1 e_0=0$,
\item $\thetaclass \equiv 3+i+j+k \mod (4, IPK^1)$.
\end{enumerate}
\end{lem}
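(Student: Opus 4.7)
The plan is to mimic, in the untwisted setting, the two-stage construction of $\thetazero$ and $\thetaphi$ carried out in \Cref{lem:N1} and \Cref{lem:N2partial}, namely: first produce any element $\thetazero\in\Z_2[\![P\mathbb{S}_2^1]\!]$ satisfying (2) and (3), then symmetrize over $PC_6$ to build a $\thetaclass$ satisfying all three conditions. Condition (1) will be achieved by the averaging; conditions (2) and (3) will be preserved under the average thanks to the fact that $PC_6$ commutes with $\alpha$ and fixes $e_0$.

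First I would construct a preliminary $\thetazero$. Let $N_0 = \ker(\partial_1) \subseteq \Z_2{\uparrow}^{\mathbb{S}_2^1}_{C_6}$ where $\partial_1(e_1)=(e-\alpha)e_0$. Running the same Lyndon--Hochschild--Serre / Poincar\'e duality bookkeeping for $PK^1$ as in the proof of \Cref{lem:N1} (with $\Z_2$ in place of $\W$ and $PC_6$ in place of $PG_{12}$), one sees that $H_0(PK^1, N_0)$ is a free $\Z_2$-module of rank $4$, and that the kernel of $H_0(PK^1,\partial_1)$ is generated, as a $\Z_2[PG_{24}]$-module, by the residue class of $(3+i+j+k)e_1$. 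The Nakayama lemma (Lemma 4.4 of \cite{HennCent}, equivalently Lemma 4.3 of \cite{ghmr}) then guarantees the existence of some $\thetazero\in\Z_2[\![P\mathbb{S}_2^1]\!]$ with $\thetazero e_1 \in N_0$ and $\thetazero e_1 \equiv (3+i+j+k)e_1 \mod (4, IPK^1)$, i.e.\ an element satisfying (2) and (3).

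Next I would set
\[
\thetaclass \;=\; \frac{1}{3}\sum_{\tau\in PC_6}\tau\,\thetazero\,\tau^{-1},
\]
which lies in $\Z_2[\![P\mathbb{S}_2^1]\!]$ because $|PC_6|=3$ is a unit in $\Z_2$. Condition (1) is immediate: replacing $\tau$ by $\tau'\tau$ in the sum shows $\tau'\thetaclass=\thetaclass\tau'$ for every $\tau'\in PC_6$. For (2), note that $\alpha\in\W$ commutes with $\omega\in PC_6 \subseteq \W^\times$, so $\tau(e-\alpha)\tau^{-1}=e-\alpha$ for $\tau\in PC_6$; combined with $\tau^{-1}e_0=e_0$ (since $PC_6 \subseteq PG_{48}$), this gives $\tau\thetazero\tau^{-1}\cdot\delta_1 e_0 = \tau\thetazero\delta_1 e_0 = 0$ for each summand, hence $\thetaclass\delta_1 e_0 = 0$. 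For (3), one uses the identities $\omega i\omega^{-1}=j$, $\omega j\omega^{-1}=k$, $\omega k\omega^{-1}=i$ to conclude $\tau(3+i+j+k)\tau^{-1}\equiv 3+i+j+k \mod (4,IPK^1)$ for all $\tau\in PC_6$, so each of the three summands equals $3+i+j+k$ modulo the ideal and the average is again $3+i+j+k$.

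The main obstacle is the initial homological step: verifying that $H_0(PK^1, N_0)$ has the claimed structure and that $(3+i+j+k)e_1$ generates the kernel of $H_0(PK^1,\partial_1)$. This requires the explicit identification of $H_1(PK^1,\Z_2)$ together with the $PQ_8$-action given in \Cref{lem:H1KZ2}, and then a short linear-algebra computation over $\Z_2$ of the $4\times 3$ matrix whose columns are the images $\partial_1(e_1), \partial_1(ie_1), \partial_1(je_1), \partial_1(ke_1)$, paralleling the corresponding step in the proof of \Cref{lem:N1}. Once that is in place, the averaging step is essentially formal.
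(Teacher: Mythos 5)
Your proposal is correct and follows essentially the same route as the paper: the paper does not prove \cref{thm:thetaall} directly but cites \cite[Lem.\ 3.1.3]{BeaudryRes}, and the proofs of \cref{lem:N1} and \cref{lem:N2partial} are explicitly modeled on the corresponding arguments in that reference, so reverse-engineering an untwisted version of the two-stage ``construct $\thetazero$, then average over the finite subgroup'' argument is exactly what the cited proof does. One small slip: to conclude $\tau^{-1}e_0 = e_0$ in the untwisted setting you want $PC_6 \subseteq PG_{24}$ (since $e_0$ generates $\Z_2{\uparrow}^{P\mathbb{S}_2^1}_{PG_{24}}$), not $PC_6 \subseteq PG_{48}$; the containment you wrote is true but not the one being used.
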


Combining the results shown so far in this section and their analogues for the resolution of \cite{BeaudryRes}, we have now shown:
\begin{cor}\label{cor:resbetterclassical}
We have:
\begin{enumerate}[(a)]
\item
There is an exact sequence of $\Z_2[\![P\SS_2^1]\!]$-modules
\[0 \rightarrow   \Z_2{\uparrow}^{P\Sn_{2}^1}_{PG_{24}'}  \xra{{\partial}_3}  \Z_2 {\uparrow}^{P\Sn_{2}^1}_{PC_{6}}  \xra{{\partial}_2} \Z_2 {\uparrow}^{P\Sn_{2}^1}_{PC_{6}}  \xra{\partial_1}  \Z_2 {\uparrow}^{P\Sn_{2}^1}_{PG_{24}} \xra{\varepsilon} \Z_2 \rightarrow 0,
\]
where
\begin{enumerate}[(1)]
\item $\partial_1(e_1)   = \delta_1e_0 $ for $\delta_1 = (e-\alpha)$,
\item   $\partial_2  = \widetilde{\partial}_2 g_2^{-1}$ where  $g_2$ is an automorphism of $ \Z_2 {\uparrow}^{P\Sn_{2}^1}_{PC_{6}} $ and $ \widetilde{\partial}_2(e_2) = \thetaclass e_1$ for $\thetaclass$ as in \cref{thm:thetaall}, 
\item   $\partial_3(e_4)   = \delta_3e_2 $ for $\delta_3 =  \pi (e+i+j+k)(e-\alpha^{-1}) \pi^{-1}$.
\end{enumerate}
\item There is an exact sequence of $\WG{P\GG_2^1}$-modules
\[0 \to  \W {\uparrow}^{P\G_{2}^1}_{PG_{48}'}\xra{\partial_3}  \W {\uparrow}^{P\G_{2}^1}_{PG_{12}}  \xra{\partial_2}  \W {\uparrow}^{P\G_{2}^1}_{PG_{12}}  \xra{\partial_1}  \W {\uparrow}^{P\G_{2}^1}_{PG_{48}} \xra{\varepsilon} \W \to 0 \]
where 
\begin{enumerate}[(1)]
\item  $\partial_1(e_1) =\delta_1^\phi e_0$ for $\delta_1^\phi =  \tr_\sigma(\delta_1)$,
\item   $\partial_2  = \widetilde{\partial}_2 g_2^{-1}$ where  $g_2$ is an automorphism of $ \W {\uparrow}^{P\G_{2}^1}_{PG_{12}}  $ and $ \widetilde{\partial}_2(e_2) = \thetaphi e_1$ for $\thetaphi$ as in \cref{lem:N2partial}, 
\item $\partial_3(e_3) =\delta_3^\phi e_2$ for $\delta^\phi_3  =  \tr_\sigma(\delta_3)$
\end{enumerate}
for  $\delta_1, \delta_3$ as (a).
\end{enumerate}
\end{cor}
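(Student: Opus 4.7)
The strategy is to collect the exact sequences constructed above into a single resolution, then invoke the parallel construction in \cite{BeaudryRes} to obtain the $\Z_2[\![P\SS_2^1]\!]$--version. I will handle part (b) first, as part (a) will then follow by the same argument applied to the classical setup.

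For part (b), the plan is to splice the three short exact sequences \eqref{eq:M1}, \eqref{eq:M2} and \eqref{eq:M3} of \cref{lem:N0}, \cref{lem:N1} and \cref{lem:M3} into the four-term resolution, obtaining the exact sequence \eqref{eq:M3exact} of $\WG{P\GG_2^1}$-modules. \cref{lem:N1partial} identifies $\partial_1(e_1) = \tr_\sigma(\delta_1)e_0 = \delta_1^\phi e_0$. For the middle map, we take $\widetilde\partial_2(e_2) = \thetaphi e_1$ as in \cref{lem:N2partial}, where $\thetaphi$ is given by the averaging formula \eqref{eq:thetaphi}. Finally, the identification $\varphi \colon M_3 \xrightarrow{\cong} \W{\uparrow}^{P\G_2^1}_{PG_{48}'}$ constructed via the Poincar\'e duality argument lets us replace the inclusion $M_3 \hookrightarrow \sD_2$ by a map $\widetilde{\partial}_3 \colon \sD_3 \to \sD_2$, and then \cref{lem:delta3prime} provides automorphisms $g_2, g_3$ so that after redefining $\partial_3 := \partial_3'$ (which satisfies $\partial_3(e_3) = \tr_\sigma(\delta_3)e_2$) and $\partial_2 := \widetilde{\partial}_2 g_2^{-1}$, we obtain a new complex whose differentials have the advertised form. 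Exactness is preserved because $g_2, g_3$ are isomorphisms and the diagram of \cref{lem:delta3prime} commutes up to these automorphisms.

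For part (a), the observation is that the entire construction of this section was modeled on \cite[Section 3]{BeaudryRes}, and every step has a direct analogue over $\Z_2[\![P\SS_2^1]\!]$. In particular, the element $\thetaclass$ of \cref{thm:thetaall} plays exactly the role of $\thetaphi$, the duality theorem of \cite[Theorem 3.3.1]{BeaudryRes} produces the analogous automorphism $g_2$ and the analogous map $\partial_3'(e_3) = \delta_3 e_2$ with $\delta_3 = \pi(e+i+j+k)(e-\alpha^{-1})\pi^{-1}$, and the formula $\partial_1(e_1)=\delta_1 e_0$ comes directly from the original resolution. Redefining the last two differentials in \cite[Theorem 3.1.1]{BeaudryRes} in the same way as above yields the desired exact sequence.

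The main subtlety, and hence the step I expect to require the most care, is verifying that the redefinition $\partial_3 := \partial_3'$ and $\partial_2 := \widetilde{\partial}_2 g_2^{-1}$ is consistent: that the resulting composite $\partial_2 \circ \partial_3$ still vanishes and that the new complex is exact. This is immediate from the diagram of \cref{lem:delta3prime} (and its classical analogue), since $g_2$ and $g_3$ are isomorphisms of the relevant modules and the bottom row of that diagram is itself exact by construction. Beyond this, everything else is a bookkeeping exercise of matching the generators $e_p$ and formulas across the classical and Galois-twisted settings.
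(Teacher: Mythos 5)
Your proposal is correct and follows the same route as the paper: the paper treats the corollary as a summary of the preceding lemmas, stating that it results from "combining the results shown so far in this section and their analogues for the resolution of \cite{BeaudryRes}", and you spell out exactly that combination (splicing \eqref{eq:M1}, \eqref{eq:M2}, \eqref{eq:M3}; reading $\partial_1$, $\widetilde{\partial}_2$, $\partial_3'$ off \cref{lem:N1partial}, \cref{lem:N2partial}, \cref{lem:delta3prime}; then redefining $\partial_3:=\partial_3'$, $\partial_2 := \widetilde{\partial}_2 g_2^{-1}$, and invoking the analogous redefinition in \cite{BeaudryRes} for part (a)). One small imprecision: the exactness of the new complex does not quite come from the exactness of the \emph{bottom row} of the diagram in \cref{lem:delta3prime} (that row uses $\partial_1'$, not $\partial_1$); rather it follows because the new complex is obtained from the original exact one by postcomposing $\widetilde{\partial}_3$ with the automorphism $g_3^{-1}$ and replacing $(\widetilde{\partial}_2,\widetilde{\partial}_3)$ by $(\widetilde{\partial}_2 g_2^{-1}, g_2\widetilde{\partial}_3)$, which preserves kernels and images since $g_2,g_3$ are isomorphisms — essentially what you say, so this is only a matter of phrasing.
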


\subsection{A description of the middle map}\label{sec:middle}
Our next goal is to improve on the description of $\partial_2$ in the above result \cref{cor:resbetterclassical} and to relate the map in (b) to that in (a).
In both cases, the middle map $\partial_2$ is the composite of two maps, $\widetilde{\partial}_2$ and $g_2^{-1}$ and we will study them separately.

We start with examining the automorphisms $g_2$.
For this, we note the following general result about the endomorphisms of $  \Z_2\!\uparrow_{PC_6}^{P\SS_2^1}$ as a  $ \Z_2[\![P\SS_2^1]\!]$-module and endomorphisms of  $ \W{\uparrow}^{P\GG_2^1}_{PG_{12}} $ as a Galois-twisted $\WG{P\mathbb{G}_2^1}$-module.  
Recall that
\begin{align*} 
H_0(PK^1,   \Z_2\!\uparrow_{PC_6}^{P\SS_2^1}) \cong \Z_2\!\uparrow_{PC_{6}}^{PG_{24}} \\
H_0(PK^1,  \W{\uparrow}^{P\GG_2^1}_{PG_{12}}) \cong \W\!\uparrow_{PG_{12}}^{PG_{48}}
\end{align*}
and that these are free rank four as $\Z_2$-, respectively $\W$-modules, generated by the images of $e,i, j, k$. 
\begin{lem}\label{lem:upsilon}
We have:
\begin{enumerate}[(a)]
\item For an endomorphism of $ \Z_2[\![P\SS_2^1]\!]$-modules
\[\varphi \colon   \Z_2\!\uparrow_{PC_6}^{P\SS_2^1}  \to  \Z_2\!\uparrow_{PC_6}^{P\SS_2^1} ,\]
there is an element $\upsilon \in \Z_2[\![P\SS_2^1]\!]$  such that $\tau\upsilon =\upsilon\tau $ for all $\tau \in   PC_6$,
\begin{align*}
\varphi(e) &=\upsilon e \\
\upsilon e &\equiv \lambda  e + \mu (i+j+k)  \mod IPK^1
\end{align*}
for $\lambda , \mu \in \Z_2$. 
\item  For and endomorphism of Galois twisted $\WG{P\GG_2^1}$-modules
\[\varphi \colon   \W\!\uparrow_{PG_{12}}^{P\GG_2^1}  \to  \W\!\uparrow_{PG_{12}}^{P\GG_2^1} , \]
there is an element $\upsilon_\phi \in \WG{P\GG_2^1}$  such that $\tau\upsilon_\phi =\upsilon_\phi\tau  $ for all $\tau \in   PG_{12}$,
\begin{align*}
\varphi(e) &=\upsilon_\phi e \\
\upsilon_\phi e &\equiv \lambda  e + \mu (i+j+k)  \mod IPK^1
\end{align*}
for some $\lambda , \mu \in \Z_2$. 
\end{enumerate}
In both cases, $\varphi$ is an automorphism if and only if 
\[\lambda  -\mu \not\equiv 0\mod 2.\]
\end{lem}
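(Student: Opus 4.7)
The plan is to realize $\upsilon$ (resp.\ $\upsilon_\phi$) as an explicit average over $PC_6$ (resp.\ $PG_{12}$), identify the residue modulo $IPK^1$ via the fixed points of the respective subgroup on the coset basis $\{e,i,j,k\}$, and then deduce the automorphism criterion from a $4\times 4$ determinant computation in the reduction modulo $(2,IPK^1)$, invoking Nakayama's lemma in the form of \cite[Lem.~4.4]{HennCent}.

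For part (a), since $\Z_2{\uparrow}_{PC_6}^{P\SS_2^1}$ is cyclic on $e$, write $\varphi(e)=xe$ for some $x\in\Z_2[\![P\SS_2^1]\!]$. The relation $\tau e=e$ for $\tau\in PC_6$ forces $\tau xe=xe$. Since $|PC_6|=3$ is a unit in $\Z_2$, set
\[\upsilon=\tfrac{1}{3}\sum_{\tau\in PC_6}\tau x\tau^{-1}.\]
This visibly satisfies $\sigma\upsilon=\upsilon\sigma$ for $\sigma\in PC_6$, and $\upsilon e=\tfrac{1}{3}\sum_{\tau}\tau xe=xe=\varphi(e)$. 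Modulo $IPK^1$, the image of $\upsilon e$ lies in the $PC_6$-invariants of $H_0(PK^1,\Z_2{\uparrow}_{PC_6}^{P\SS_2^1})\cong\Z_2{\uparrow}_{PC_6}^{PG_{24}}$, which are spanned over $\Z_2$ by $e$ and $i+j+k$ since $\omega\in PC_6$ cyclically permutes $i,j,k$ and fixes $e$.

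For the automorphism criterion in (a), Nakayama reduces the question to whether the induced $\F_2$-linear endomorphism of $\F_2{\uparrow}_{PC_6}^{PG_{24}}$ is an isomorphism. Using $i^2=j^2=k^2=e$ and $ij=k$, $jk=i$, $ki=j$ in $PG_{24}$, its matrix in the basis $e,i,j,k$ equals $(\lambda-\mu)I+\mu J$ with $J$ the $4\times 4$ all-ones matrix. Since $J$ has eigenvalues $4,0,0,0$, the determinant is $(\lambda-\mu)^3(\lambda+3\mu)\equiv(\lambda-\mu)^4\pmod 2$, which is nonzero iff $\lambda-\mu\not\equiv 0\pmod 2$.

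For part (b), first apply $PC_6$-averaging exactly as in (a) to obtain $\upsilon\in\WG{P\GG_2^1}$ commuting with $PC_6$ and satisfying $\upsilon e=\varphi(e)$. Then, in direct analogy with the construction of $\thetaphi$ in \cref{lem:N2partial}, define
\[\upsilon_\phi=-(\zeta\upsilon+\zeta^\sigma[j-k]\upsilon[j-k]^{-1}).\]
The identity $[j-k]^2\in PC_6$ (which commutes with $\upsilon$) together with $[j-k]\zeta[j-k]^{-1}=\zeta^\sigma$ gives $[j-k]\upsilon_\phi[j-k]^{-1}=\upsilon_\phi$, and normality of $PC_6$ in $PG_{12}$ yields $\tau\upsilon_\phi=\upsilon_\phi\tau$ for $\tau\in PC_6$. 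Using $[j-k]e=e$, $[j-k]\varphi(e)=\varphi(e)$, and $\zeta+\zeta^\sigma=-1$, one checks $\upsilon_\phi e=\varphi(e)$. For the residue, $[j-k]$ fixes the cosets $e,i$, swaps $j,k$, and acts as $\sigma$ on $\W$-coefficients, so combined with the $PC_6$-analysis the $PG_{12}$-invariants in $H_0(PK^1,\W{\uparrow}_{PG_{12}}^{P\GG_2^1})\cong\W{\uparrow}_{PG_{12}}^{PG_{48}}$ are spanned over $\W^{\Gal}=\Z_2$ by $e$ and $i+j+k$; the automorphism criterion then follows from the identical $(\lambda-\mu)^4$ determinant computation, carried out now over $\F_4$. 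The main obstacle is the Galois bookkeeping in (b)---verifying strict $PG_{12}$-equivariance of $\upsilon_\phi$ and pinning down the $\Z_2$-submodule of $PG_{12}$-invariants---while everything else reduces to the $PC_6$-argument of (a) together with a straightforward finite-dimensional linear algebra computation.
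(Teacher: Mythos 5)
Your proof is correct and matches the paper's argument: you construct $\upsilon$ and $\upsilon_\phi$ by averaging over $PC_6$ (resp.\ $PG_{12}$—your two-step ``$PC_6$-average then $[j-k]$-symmetrize'' formula expands to exactly the paper's $\upsilon_\phi = -\tfrac{1}{3}\sum_{g\in PG_{12}}\zeta^g g\upsilon_0 g^{-1}$), and you pin down $\lambda,\mu\in\Z_2$ via $\omega$- and $[j-k]$-invariance precisely as the paper does. The only cosmetic difference is in the isomorphism criterion: the paper invokes Lemma~4.3 of \cite{ghmr} directly on the rank-one Nakayama quotient $\F_4\otimes_{\W[\![PS_2^1]\!]}\W\!\uparrow_{PG_{12}}^{P\GG_2^1}\cong\F_4$, where $\upsilon_\phi$ acts by $\lambda+3\mu\equiv\lambda-\mu\bmod 2$, whereas you stop at the rank-four mod-$(2,IPK^1)$ quotient and compute a $4\times 4$ determinant; this is an equivalent route, since that quotient is free of rank one over the local ring $\F_4[PQ_8]$ (resp.\ $\F_2[PQ_8]$ in part~(a)), so invertibility there is detected by the further one-dimensional reduction.
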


\begin{proof}
We prove (b), the proof for (a) being completely analogous and easier.
Choose any $\upsilon_0 \in \W[\![P\G_2^1]\!]$ such that
\[\varphi(e) = \upsilon_0 e.\]
For any $\tau \in PG_{12}$, we must have
\[ \tau\upsilon_0 e =\upsilon_0e . \]
Let
\[\upsilon_\phi =-\frac{1}{3}\sum_{g\in PG_{12}} \zeta^gg\upsilon_0 g^{-1}  .\]
Then
\begin{align*}
\upsilon_\phi  e &=-\frac{1}{3}\sum_{g\in PG_{12}} \zeta^gg\upsilon_0 g^{-1}e  \\
&=-\frac{1}{3}\sum_{g\in PG_{12}} \zeta^g\upsilon_0 e  \\
&=\upsilon_0 e  .
\end{align*}
So, $\varphi(e) = \upsilon_\phi  e$. 
For some  $\lambda, \lambda_i, \lambda_j,  \lambda_k \in \W$, we have
\[\upsilon_\phi  e \equiv \lambda e+ \lambda_i i + \lambda_j j  +\lambda_k   k \mod I  PK^1.\]
Using the invariance with respect to $\omega$ we get $\lambda_i =\lambda_j  = \lambda_k$, and we let this common value be $\mu$. The invariance with respect to $[j-k]$ we get $\lambda,\mu \in \Z_2 $. 

By Lemma 4.3 of \cite{ghmr}, $\upsilon$ is an isomorphism if and only if $\Tor_{0}^{PS_2^1}(\F_4, \varphi)$ is an isomorphism and $\Tor_{1}^{PS_2^1}(\F_4, \varphi)$ is onto. 
The first condition gives the requirement that $\lambda-\mu \not\equiv 0 \mod 2$, and the second is automatically satisfied since
$  \W\!\uparrow_{PG_{12}}^{P\GG_2^1} $ is projective as a $PS_2^1$-module. 
\end{proof}

Recall that  the ideal $\Iideal$ was defined in \eqref{defn:idealI}. For $P\G_2^1$, it corresponds to
\[\Iideal=(4, 2(IPS_2^1)^2, 2IPK^1, (IPK^1)^2, IPK^1 \cdot (IPS_2^1)^2, (IPS_2^1)^2 \cdot IPK^1 ) .\]
\begin{lem}\label{lem:thetaupsilon}
We have:
\begin{enumerate}[(a)]
\item There is an element $\delta_2 \in \Z_2[\![P\SS_2^1]\!]$ such that $\tau \delta_2  = \delta_2 \tau$ for all $\tau \in PC_{6}$, such that
\[ \partial_2 (e_2) =\delta_2 e_1\]
and, for $\thetaclass$ as in \cref{thm:thetaall},
\[ \delta_2 e_1 \equiv \thetaclass e_1 \mod \Iideal .\]
\item There is an  element $\delta_2^\phi \in \WG{P\GG_2^1}$ such that $\tau \delta_2^\phi  = \delta_2^\phi \tau$ for all $\tau \in PG_{12}$, such that
\[ \partial_2 (e_2) =\delta_2^\phi e_1\]
and, for $\thetaphi$ as in \cref{lem:N2partial},
\[ \delta_2^\phi e_1 \equiv \thetaphi  e_1 \mod \Iideal .\]
\end{enumerate}
\end{lem}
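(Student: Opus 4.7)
The plan is to use \cref{lem:upsilon} to describe the automorphism $g_2$, and then combine this with the defining congruences for $\thetaclass$ and $\thetaphi$. First, I would apply \cref{lem:upsilon}(a) to $g_2^{-1}$ to obtain $\upsilon \in \Z_2[\![P\SS_2^1]\!]$ commuting with $PC_6$ with $g_2^{-1}(e_2) = \upsilon e_2$ and $\upsilon e_2 \equiv \lambda e_2 + \mu(i+j+k)e_2 \pmod{IPK^1\cdot \sD_2}$ for some $\lambda, \mu \in \Z_2$ with $\lambda - \mu$ odd; similarly \cref{lem:upsilon}(b) yields $\upsilon_\phi \in \WG{P\G_2^1}$ commuting with $PG_{12}$. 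I would then set $\delta_2 := \upsilon\thetaclass$ and $\delta_2^\phi := \upsilon_\phi \thetaphi$. The required commutation properties follow immediately from the analogous properties of each factor, and since $\widetilde\partial_2$ is a left module map,
\[
\partial_2(e_2) = \widetilde\partial_2(g_2^{-1}(e_2)) = \widetilde\partial_2(\upsilon e_2) = \upsilon \thetaclass e_1 = \delta_2 e_1,
\]
and analogously in case (b).

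The heart of the argument will be the congruence $(\upsilon - 1)\thetaclass e_1 \in \mathcal{I}\cdot\sD_1$, for which I plan to exploit two approximations. The identity $\omega = -\tfrac12(1+i+j+k)$ gives $3 + i + j + k = 2(1-\omega)$, so $\thetaclass \equiv 2(1-\omega) \pmod{(4, IPK^1)}$ as an element of $\Z_2[\![P\SS_2^1]\!]$, whence $\thetaclass e_1 \in (2, IPK^1)\cdot\sD_1$. On the other side, using $i+j+k = -1-2\omega$ and the parity condition $\lambda - \mu = 1+2c$, the mod-$IPK^1$ reduction of $\upsilon$ rewrites as
\[
\upsilon e_1 \equiv e_1 + 2(c - \mu\omega)e_1 \pmod{IPK^1\cdot\sD_1},
\]
so $(\upsilon - 1)e_1 \in (2, IPK^1)\cdot\sD_1$. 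Composing these two containments yields
\[
(\upsilon - 1)\thetaclass e_1 \in (4,\, 2IPK^1,\, (IPK^1)^2)\cdot\sD_1 \subseteq \mathcal{I}\cdot \sD_1,
\]
since each of $4$, $2IPK^1$, and $(IPK^1)^2$ appears among the generators of $\mathcal{I}$.

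Case (b) is handled by an identical computation with $\upsilon_\phi, \thetaphi$ in place of $\upsilon, \thetaclass$ and $PG_{12}$ in place of $PC_6$; the key identities about $\omega$ and the parity of $\lambda-\mu$ are unchanged. The main obstacle I anticipate is the one-sided versus two-sided ideal bookkeeping that arises when combining action-level information about $\upsilon$ (through \cref{lem:upsilon}) with the ring-level approximation of $\thetaclass$. However, since the calculation only ever requires the mod-$IPK^1$ reduction, and since the relevant generators of $\mathcal{I}$ are central enough (the element $2$ and powers of the normal subgroup $PK^1$'s augmentation ideal), I expect this bookkeeping to work out with no finer analysis needed.
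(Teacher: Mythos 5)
Your proposal is correct and matches the paper's overall strategy: define $\delta_2 = \upsilon\thetaclass$ (resp.\ $\delta_2^\phi=\upsilon_\phi\thetaphi$) with $\upsilon$ coming from \cref{lem:upsilon} applied to $g_2^{-1}$, observe $\partial_2(e_2)=\upsilon\thetaclass e_1$, and then reduce to showing $(\upsilon-1)\thetaclass e_1 \in \Iideal\,\sD_1$. Where you differ from the paper is in the final estimate. The paper isolates the leading term $\mu(e+i+j+k)$ of $(\upsilon-1)e_1$ modulo $(2,IPK^1)$, invokes $e+i+j+k\equiv(1-i)(1-j)\bmod 2$ to place $e+i+j+k$ in $(2,(IPS_2^1)^2)$, and computes $(e+i+j+k)(3+i+j+k)=6(e+i+j+k)\in(4,2(IPS_2^1)^2)$ before adding up the cross terms. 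Your argument is shorter and cleaner: rewrite $\lambda e+\mu(i+j+k)=e+2(c-\mu\omega)$ via $\lambda-\mu=1+2c$, so $(\upsilon-1)e_1\in(2,IPK^1)\sD_1$; use $3+i+j+k=2(1-\omega)$ so $\thetaclass\in(2,IPK^1)$; and then the product lands in $(2,IPK^1)^2=(4,2IPK^1,(IPK^1)^2)\subseteq\Iideal$, with the generators $2(IPS_2^1)^2$ and the mixed $IPK^1\cdot(IPS_2^1)^2$ terms of $\Iideal$ never entering. The one step you flag as a bookkeeping worry --- passing from the module-level containment for $(\upsilon-1)e_1$ to a statement about the product $(\upsilon-1)\thetaclass e_1$ --- does go through, precisely because $\sD_1$ is cyclic and $\thetaclass$ (resp.\ $\thetaphi$) commutes with $PC_6$ (resp.\ $PG_{12}$), so right multiplication by $\thetaclass$ descends to a left-module endomorphism of $\sD_1$: one has $(\upsilon-1)e_1=ae_1$ with $a\in(2,IPK^1)$ by cyclicity and normality of $PK^1$, and then $(\upsilon-1)\thetaclass e_1 = a\thetaclass e_1\in(2,IPK^1)^2\sD_1$. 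The paper relies on the same implicit reasoning, so you are on solid ground.
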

\begin{proof}
Again, we prove (b) and (a) follows similarly.
 Let $\upsilon_\phi$ be as in \cref{lem:upsilon} for $\varphi = g_2^{-1}$, so that $\varphi(e_2) = \upsilon_\phi e_2$.
Then
\begin{align*}
\partial_2(e_2)&=  \widetilde{\partial}_2(g_2^{-1}(e_2)) \\
&=\upsilon_\phi \thetaphi   e_1 .
\end{align*}
Let $\delta_2^\phi= \upsilon_\phi \thetaphi  $.  This proves the first claim.

Note that
\[\partial_2(e_2) =  \upsilon_\phi \thetaphi   e_1 = \thetaphi  e_1 + ( \upsilon_\phi - 1)   \thetaphi  e_1  \]
so we need to control $( \upsilon_\phi - 1)  \thetaphi  e_1$ for the second claim.
Recall that
\[ \thetaphi  e_1 \equiv  (3+i+j+k)e_1  \mod (4, I  PK^1). \]
 Write
 \[\upsilon_\phi  e_1 \equiv (\lambda e+\mu(i+j+k)) e_1 \mod IPK^1 \]
  as in \cref{lem:upsilon}.
Since $g_2^{-1}$ is an isomorphism,
\[(\lambda -1)  \equiv \mu \mod 2\]
so that 
\[(\lambda -1) e+\mu(i+j+k) \equiv \mu(e+i+j+k)  \mod 2.\]
Furthermore,
\begin{align*} 
e+i+j+k &\equiv (1-i)(1-j) \mod 2
\end{align*}
in $\WG{P\G_2^1}$. So $(e+i+j+k)\in  (2, (IS_2^1)^2)$. Therefore,
\[ (e+i+j+k)   (3+i+j+k) = 6(e+i+j+k)  \in (4,2(IS_2^1)^2).  \]
It now follows from these facts and  a direct computation that
\begin{align*}
( \upsilon_\phi - 1)   \thetaphi  e_1  &\equiv (\mu(e+i+j+k)  + (2,IPK^1)) (3+i+j+k + (4, I  PK^1))\\
&\equiv 0 \mod \Iideal. \qedhere
\end{align*}
\end{proof}

We turn to studying the relationship between the elements $\thetaclass$ of \cref{thm:thetaall} and  $\thetaphi$  of \cref{lem:N2partial}. 
Our goal is to prove that $\thetaphi $ can be approximated by $\tr_{\sigma}(\thetaclass)$.
\begin{prop}\label{lem:thetaphibest}
For $\thetaclass \in \Z_2[\![P\mathbb{S}_2^1]\!]$ as in \cref{thm:thetaall}, the element $\thetaphi $ of \cref{lem:N2partial} can be chosen to satisfy
\[ \thetaphi \equiv \tr_\sigma(\thetaclass) \mod (4IPK^1, 2(IPS_2^1)^2\cdot IPK^1, (IPK^1)^2) \subset \Iideal.  \]
\end{prop}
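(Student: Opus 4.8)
The plan is to exhibit the required $\thetaphi$ as the output of the averaging formula \eqref{eq:thetaphi} of \cref{lem:N2partial} applied to a small perturbation of the element $\thetaclass$ of \cref{thm:thetaall}. The point of departure is the identity
\[
-\tfrac{1}{3}\sum_{g\in PG_{12}}\zeta^{g}\, g\thetaclass g^{-1} \;=\; \tr_{\sigma}(\thetaclass),
\]
which holds because $\thetaclass$ commutes with $PC_6$ (so the three terms with $g\in PC_6$ contribute $3\zeta\thetaclass$ and the three with $g\in [j-k]PC_6$ contribute $3\zeta^{\sigma}[j-k]\thetaclass[j-k]^{-1}$, while $|PC_6|=3$), and because the operator $x\mapsto x^{\sigma}$ entering the definition of $\tr_{\sigma}$ is conjugation by $[j-k]\in PG_{12}$. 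Thus $\tr_{\sigma}(\thetaclass)$ is literally \eqref{eq:thetaphi} evaluated at $\thetaclass$; the obstruction is that $\thetaclass$ itself is not admissible in \cref{lem:N1}: it satisfies the congruence $\thetaclass e_1\equiv (3+i+j+k)e_1 \mod (4, IPK^1)$ but not the kernel condition $\thetaclass e_1\in\ker\partial_1$ for the new $\partial_1$. I would remedy this by subtracting an element of the ideal $\Kideal:=(4\, IPK^1,\ 2(IPS_2^1)^2\, IPK^1,\ (IPK^1)^2)$ appearing in the statement.

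First I would pin down the defect. By \cref{lem:N1partial} and \cref{lem:N0},
\[
\partial_1(\thetaclass e_1)=\thetaclass\,\tr_{\sigma}(\delta_1)e_0
= -\bigl(\zeta\,\thetaclass(e-\alpha)+\zeta^{\sigma}\,\thetaclass(e-\alpha^{-1})\bigr)e_0 ,
\]
using $\alpha^{\sigma}e_0=\alpha^{-1}e_0$ and that $\thetaclass$, having $\Z_2$-coefficients, commutes with $\zeta$. The first term vanishes by \cref{thm:thetaall}(2). For the second, write $(e-\alpha^{-1})=-\alpha^{-1}(e-\alpha)$ and commute $\thetaclass$ past $\alpha^{-1}$; using \cref{thm:thetaall}(2) once more,
\[
\thetaclass(e-\alpha^{-1})e_0 = -[\thetaclass,\alpha^{-1}](e-\alpha)e_0 .
\]
Now $\thetaclass\equiv 3+i+j+k=2(e-\omega) \mod (4, IPK^1)$ by \cref{thm:thetaall}(3), and $\omega$ — in fact all of $PC_6$ — commutes with $\alpha$ by \cref{rem:taucom}, so $[2(e-\omega),\alpha^{-1}]=0$ and hence $[\thetaclass,\alpha^{-1}]\in [(4,IPK^1),\alpha^{-1}]\subseteq (4\, IPK^1,(IPK^1)^2)$, the last inclusion a short commutator computation exploiting $(e-\alpha^{-1})\in IPK^1$. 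Since also $(e-\alpha)\in IPK^1$, this gives
\[
\partial_1(\thetaclass e_1)\in \bigl(4(IPK^1)^2,\ (IPK^1)^3\bigr)\sD_0 \subseteq \Kideal\cdot IPK^1\,\sD_0 .
\]

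Next I would cancel the defect inside $\Kideal$. The map $\partial_1\colon\sD_1\to M_1=IPK^1\,\sD_0$ is a surjection of $\WG{P\G_2^1}$-modules (\cref{lem:N0}, \cref{lem:N1partial}), so $\partial_1(\Kideal\,\sD_1)=\Kideal\cdot M_1=\Kideal\cdot IPK^1\,\sD_0$, which by the previous display contains $\partial_1(\thetaclass e_1)$; hence there is $z\in\Kideal$ with $\partial_1(ze_1)=\partial_1(\thetaclass e_1)$. Put $\thetazero:=\thetaclass-z$. Then $\thetazero e_1\in\ker\partial_1$, and since $z\in\Kideal\subseteq (4,IPK^1)$ we still have $\thetazero e_1\equiv (3+i+j+k)e_1 \mod (4,IPK^1)$, so $\thetazero$ is admissible in \cref{lem:N1}. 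By \cref{lem:N2partial}, $\thetaphi:=-\tfrac13\sum_{g\in PG_{12}}\zeta^{g}\,g\thetazero g^{-1}$ is then a valid choice for the middle map, and by additivity of the averaging together with the opening identity,
\[
\thetaphi=\tr_{\sigma}(\thetaclass)+\tfrac13\sum_{g\in PG_{12}}\zeta^{g}\,g z g^{-1}.
\]
The correction term lies in $\Kideal$, because $\Kideal$ is stable under conjugation by $PG_{12}$ (as $PK^1$ and $PS_2^1$ are normal in $P\G_2^1$) and under multiplication by the units $\zeta^{g}$ and $\tfrac13$. Hence $\thetaphi\equiv\tr_{\sigma}(\thetaclass)\mod\Kideal$, which is the assertion.

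The step I expect to be the crux is the sharp filtration estimate on the defect: it must land in $\bigl(4(IPK^1)^2,(IPK^1)^3\bigr)\sD_0$ and not merely in $\bigl(4\, IPK^1,(IPK^1)^2\bigr)\sD_0$, since only the former lies in the image $\partial_1(\Kideal\,\sD_1)$ and can therefore be cancelled by a correction that itself stays in $\Kideal$. This extra power of $IPK^1$ is exactly what the commutator rewriting $\thetaclass(e-\alpha^{-1})e_0=-[\thetaclass,\alpha^{-1}](e-\alpha)e_0$ produces, and it relies on $PC_6$ commuting with $\alpha$ so that the $2(e-\omega)$-part of $\thetaclass$ drops out of $[\thetaclass,\alpha^{-1}]$; the rest is careful bookkeeping with the factors $2,4$ and the powers of $IPK^1$ and $IPS_2^1$.
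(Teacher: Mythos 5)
Your overall scaffolding (correct $\thetaclass$ by an element of the ideal so that it becomes admissible in \cref{lem:N1}, then apply the averaging \eqref{eq:thetaphi}, noting that the average of $\thetaclass$ itself is exactly $\tr_\sigma(\thetaclass)$) is sound and close in spirit to the paper. But the step you yourself flag as the crux is wrong. The identity $3+i+j+k=2(e-\omega)$ does not hold in $\Z_2[\![P\mathbb{S}_2^1]\!]$: the relation $\omega=-\tfrac12(1+i+j+k)$ is an identity in the endomorphism algebra $\mathbb{D}_2$, not in the group ring, where $e,i,j,k,\omega$ are linearly independent group elements. So $\thetaclass\not\equiv 2(e-\omega)\bmod (4,IPK^1)$; indeed already modulo $(2,IPK^1)$ one has $\thetaclass\equiv e+i+j+k\equiv(e-i)(e-j)\neq 0$, while $2(e-\omega)\equiv 0$. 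Consequently your conclusion $[\thetaclass,\alpha^{-1}]\in(4IPK^1,(IPK^1)^2)$ is unjustified, and in fact it fails: with the correct representative $3e+i+j+k$, the commutator $[\thetaclass,\alpha^{-1}]$ is controlled by $[i,\alpha^{-1}]+[j,\alpha^{-1}]+[k,\alpha^{-1}]$, whose leading class in $IPK^1/\overline{(IPK^1)^2}\otimes\W\cong H_1(PK^1,\W)$ is (up to sign and units) $\overline{\alpha}_i+\overline{\alpha}_j+\overline{\alpha}_k=2\overline{\alpha}\neq 0$ (use $\overline{\alpha}_k=\overline{\alpha}_i+\overline{\alpha}_j+2\overline{\alpha}$, which follows from $k=ij$ and the action formulas in \cref{lem:H1KZ2}). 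So the defect $\partial_1(\thetaclass e_1)$ is only of the order of $2(IPK^1)^2\sD_0$, not of $\Kideal\cdot IPK^1\,\sD_0=(4(IPK^1)^2,2(IPS_2^1)^2(IPK^1)^2,(IPK^1)^3)\sD_0$, and your element $z\in\Kideal$ with $\partial_1(ze_1)=\partial_1(\thetaclass e_1)$ is not produced; the order-two obstruction $2\overline{\alpha}$ is exactly what blocks it.

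The paper sidesteps this by symmetrizing \emph{before} estimating the defect: from $\thetaclass\delta_1 e_0=0$ one gets $\tr_\sigma(\thetaclass)\tr_\sigma(\delta_1)e_0=(\thetaclass+\thetaclass^\sigma)(\delta_1+\delta_1^\sigma)e_0$, and now \emph{both} factors are small, since $\delta_1+\delta_1^\sigma=-\sum_{n\geq 2}(e-\alpha)^n\in(IPK^1)^2$ and $\thetaclass+\thetaclass^\sigma\in(4,2(IPS_2^1)^2,IPK^1)$ by conditions (3) and the congruence $\thetaclass\equiv(e-i)(e-j)\bmod(2,IPK^1)$. Writing $(\delta_1+\delta_1^\sigma)e_0=\gamma\tr_\sigma(\delta_1)e_0$ with $\gamma\in IPK^1$ (possible by \cref{lem:N0}), one takes $\thetazero=\tr_\sigma(\thetaclass)-(\thetaclass+\thetaclass^\sigma)\gamma$, whose correction term lies in $(4IPK^1,2(IPS_2^1)^2\cdot IPK^1,(IPK^1)^2)$, and then averages. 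In other words, the extra factor of $IPK^1$ that you try to extract from a commutator of the unsymmetrized $\thetaclass$ is instead supplied by the cancellation $\delta_1+\delta_1^\sigma\in(IPK^1)^2$ in the symmetrized expression; without that symmetrization the needed estimate is false, so your argument has a genuine gap at its central step.
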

\begin{proof}
Recall that $\delta_1=e-\alpha$. Condition (2) of \cref{thm:thetaall} implies that
\[\thetaclass \delta_1e_0=0.\]
Therefore, $\thetaclass^\sigma \delta_1^\sigma e_0=0$ as well and
\begin{align}\label{eq:idtrtr}
\tr_\sigma(\thetaclass) \tr_\sigma(\delta_1)e_0  &= (\zeta \thetaclass +\zeta^{\sigma} \thetaclass^\sigma)(\zeta \delta_1 +\zeta^{\sigma} \delta_1^\sigma)e_0 \\
&= (\thetaclass\delta_1^\sigma+\thetaclass^\sigma \delta_1 )e_0  \nonumber \\
&= (\thetaclass +\thetaclass^\sigma )( \delta_1 + \delta_1^\sigma)e_0. \nonumber
\end{align}
We have in $\WG{P\GG_2^1}$, 
\[ \delta_1 + \delta_1^\sigma = \tr_\sigma(e-\alpha) +  \tr_\sigma(e-\alpha^{\sigma}) .\]
But,
\begin{align*}
e-\alpha^{\sigma} &=  (e-\alpha^{-1}) \\
&= e- \frac{1}{e-(e-\alpha)} \\
&= e- \sum_{n\geq 0} (e-\alpha)^n \\
&= - \sum_{n\geq 1} (e-\alpha)^n \\
&= -(e-\alpha)-  \sum_{n\geq 2} (e-\alpha)^n.
\end{align*}
So, 
\begin{align*} \delta_1 + \delta_1^\sigma & =   -\sum_{n\geq 2} \tr_\sigma ( (e-\alpha)^n) \\
& \equiv  0\mod  (IPK^1)^2.
 \end{align*}
It follows from \cref{lem:N0} that we can write 
\[(\delta_1 + \delta_1^\sigma)e_0 =\gamma \tr_{\sigma}(\delta_1)e_0 \]
for $\gamma \in \WG{P\mathbb{G}_2^1}$ which is in $IPK^1$, and thus, using \eqref{eq:idtrtr}, we get
\[ (\tr_\sigma(\thetaclass) - (\thetaclass +\thetaclass^\sigma)\gamma  )e_1 \in \ker(\partial_1).\]
So, we can take
\[\thetazero  =  \tr_\sigma(\thetaclass) - (\thetaclass +\thetaclass^\sigma)\gamma \]
in \cref{lem:N1}.
Since $(3+i+j+k)^\sigma =3+i+j+k$ in $\WG{P\G_2^1}$, condition (3) implies that 
\[\thetaclass + \thetaclass^{\sigma} \equiv 2\thetaclass \mod (4,IPK^1).\]
Furthermore,
\[\thetaclass \equiv (1-i)(1-j) \mod (2,IPK^1)\] 
and  so
\[\thetaclass + \thetaclass^{\sigma} \in (4, 2(IS_2^1)^2, IPK^1).\]
Since $\gamma\in IPK^1$, 
\[\thetazero  \equiv \tr_\sigma(\thetaclass)  \mod (4IPK^1, 2(IPS_2^1)^2\cdot IPK^1, (IPK^1)^2).\]
The associated $\thetaphi $ as in \eqref{eq:thetaphi} of \cref{lem:N2partial} is given by
\[\thetaphi  =- \frac{1}{3}\sum_{g\in PG_{12}} \zeta^g g \thetazero g^{-1} \]
but $\tr_\sigma(\thetaclass) $ is left unchanged by this averaging procedure by condition (2). So
\[\thetaphi  \equiv \tr_\sigma(\thetaclass) \mod (4IPK^1, 2(IPS_2^1)^2\cdot IPK^1, (IPK^1)^2). \qedhere\]
\end{proof}

Combining \cref{lem:thetaupsilon} and  \cref{lem:thetaphibest} directly shows:
\begin{theorem}\label{cor:delta2better}
For the ideal $\Iideal$ as in \eqref{defn:idealI}:
\begin{enumerate}[(a)]
\item The maps $\partial_p$ of \cref{cor:resbetterclassical}(a) satisfy
\[\partial_p(e_p) = \delta_pe_{p-1} \]
for $\delta_1,\delta_2,\delta_3 \in \Z_2[\![P\SS_2^1]\!]$ such that
\begin{align*}
\delta_1e_0  &=  (e-\alpha)e_0\\
\delta_2e_1 &\equiv \thetaclass e_1 \mod \Iideal\\
\delta_3e_2&= \pi(e+i+j+k)(e-\alpha^{-1})\pi^{-1}e_2.
\end{align*}
\item 
The maps $\partial_p$ satisfy of \cref{cor:resbetterclassical}(b) satisfy
\[\partial_p(e_p)  = \delta_p^\phi e_{p-1}\]
for  $\delta_1^\phi, \delta_2^\phi, \delta_3^\phi \in \WG{P\GG_2^1}$ such that
\begin{align*}
\delta_1^\phi e_0&= \tr_\sigma(\delta_1) e_0 \\
\delta_2^\phi e_2 &\equiv \tr_\sigma(\delta_2)e_1 \mod \Iideal \\
\delta_3^\phi e_3 &= \tr_\sigma(\delta_3) e_2 
\end{align*}
for $\delta_1,\delta_2, \delta_3$ as in (a).
\end{enumerate}
\end{theorem}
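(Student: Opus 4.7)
The proof assembles the previous lemmas. For the outer maps, the equalities $\delta_1 e_0 = (e-\alpha) e_0$ and $\delta_3 e_2 = \pi(e+i+j+k)(e-\alpha^{-1}) \pi^{-1} e_2$ in (a) are just \cref{cor:resbetterclassical}(a)(1) and (3). In (b), the identity $\delta_1^\phi e_0 = \tr_\sigma(\delta_1) e_0$ is the explicit formula for $\partial_1$ derived in \cref{lem:N0}, and $\delta_3^\phi e_2 = \tr_\sigma(\delta_3) e_2$ was computed explicitly at the end of the proof of \cref{lem:delta3prime}. So the only nontrivial step concerns the middle map.

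For the middle map, I would first apply \cref{lem:upsilon}(a) to the automorphism $g_2^{-1}$ to produce an element $\upsilon \in \Z_2[\![P\SS_2^1]\!]$ commuting with $PC_6$ with $g_2^{-1}(e_1) = \upsilon e_1$, and then set $\delta_2 := \upsilon \thetaclass$, so that $\partial_2(e_2) = \widetilde{\partial}_2(g_2^{-1}(e_2)) = \upsilon \thetaclass e_1 = \delta_2 e_1$; the congruence $\delta_2 e_1 \equiv \thetaclass e_1 \mod \Iideal$ then follows by the exact computation carried out for the Galois-twisted case in the proof of \cref{lem:thetaupsilon}(b), which adapts verbatim to this setting. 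Similarly, applying \cref{lem:thetaupsilon}(b) produces $\delta_2^\phi = \upsilon_\phi \thetaphi$ with $\partial_2(e_2) = \delta_2^\phi e_1$ and $\delta_2^\phi e_1 \equiv \thetaphi e_1 \mod \Iideal$.

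The final congruence $\delta_2^\phi e_1 \equiv \tr_\sigma(\delta_2) e_1 \mod \Iideal$ in (b) is obtained by chaining three facts modulo $\Iideal$: by the previous paragraph $\delta_2^\phi e_1 \equiv \thetaphi e_1$; by \cref{lem:thetaphibest}, $\thetaphi \equiv \tr_\sigma(\thetaclass)$ modulo a sub-ideal of $\Iideal$, and hence $\thetaphi e_1 \equiv \tr_\sigma(\thetaclass) e_1$; and applying $\tr_\sigma$ to the congruence $\delta_2 e_1 \equiv \thetaclass e_1 \mod \Iideal$ from part (a) yields $\tr_\sigma(\delta_2) e_1 \equiv \tr_\sigma(\thetaclass) e_1 \mod \Iideal$. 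The last step is the main point, and relies on $\Iideal$ being Galois invariant as an ideal in $\WG{P\G_2^1}$; this holds because both $PS_2^1$ and $PK^1$ are Galois-stable subgroups of $P\G_2^1$ (see \cref{warn:KnotGal}), so each of the generators $4$, $2(IPS_2^1)^2$, $2IPK^1$, $(IPK^1)^2$, $IPK^1 \cdot (IPS_2^1)^2$, $(IPS_2^1)^2 \cdot IPK^1$ of $\Iideal$ is $\sigma$-stable. This Galois-invariance verification is really the only substantive content; everything else is bookkeeping atop the previously established lemmas.
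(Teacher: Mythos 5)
Your proposal is correct and follows the same route as the paper's own (very terse) proof, which simply states that the result follows directly by combining \cref{lem:thetaupsilon} and \cref{lem:thetaphibest}. The one step you spell out that the paper leaves implicit — namely that passing from $\tr_\sigma(\thetaclass)$ to $\tr_\sigma(\delta_2)$ modulo $\Iideal$ requires $\Iideal$ to be stable under the Galois conjugation, which holds because $PK^1$ and $PS_2^1$ are Galois-stable in $P\G_2^1$ — is exactly the right observation, and your verification is correct.
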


We end this section by  giving an explicit formula for $\delta_2$ modulo $\Iideal$. From this, we get an explicit formula for $\delta_2^\phi \equiv \tr_{\sigma}(\delta_2)$ modulo  $\Iideal$. First, we recall the best known approximation for $\thetaclass$.
\begin{theorem}[{\cite[Thm. 3.4.5]{BeaudryRes}}]\label{thm:thetasuperprecise}
There exists $\thetaclass$ in $\Z_2[\![P\mathbb{S}_2^1]\!]$ which satisfies the conditions of \cref{thm:thetaall} such that
\begin{align*}
\thetaclass  &\equiv e + \alpha+i+j+k -\alpha_i-\alpha_j -\alpha_k \\
& -\frac{1}{3} \tr_{C_3}\bigg((e-\alpha_i)(j-\alpha_j)+(e-\alpha_i\alpha_j) (k-\alpha_k)+(e-\alpha_i\alpha_j\alpha_k)(e+\alpha)\bigg)
\end{align*}
modulo $((IPK^1)^7, 2(IPK^1)^3, 4 (IPK^1), 8)$. Here, $ \tr_{C_3} (x) = x+\omega x \omega^{-1} +\omega^2x\omega^{-2}$. 
\end{theorem}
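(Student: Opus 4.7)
The plan is to construct $\thetaclass$ by successive approximation in the combined $2$-adic and $IPK^1$-adic filtration on $\Z_2[\![P\SS_2^1]\!]$. By \cref{thm:thetaall}(3), a zeroth approximation $\thetaclass \equiv 3+i+j+k \pmod{(4, IPK^1)}$ is already in hand. The task is to iteratively produce correction terms that drive the kernel condition $\thetaclass(e-\alpha)e_0 = 0$ into deeper and deeper ideals of the filtration, while maintaining the $PC_6$-invariance condition (1) of \cref{thm:thetaall}.

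The key input is the identification of obstructions. By \cref{lem:N0}, the module $M_1$ in \eqref{eq:M1} is isomorphic to $IPK^1$ as a $\WG{PK^1}$-module; combined with the long exact sequence of \cref{lem:connecting}, this places the first-order obstruction in a subquotient of $H_0(PK^1, M_1) \cong H_1(PK^1, \W)$. The explicit $PG_{48}$-action on $H_1(PK^1, \W)$ recorded in \cref{lem:H1KZ2} tells us precisely which combinations of the classes $\bar{\alpha}, \bar{\alpha}_i, \bar{\alpha}_j, \bar{\alpha}_k$ are available as correction terms and how they transform under conjugation by $\omega, i, j, k$, and $[j-k]$. Higher-order obstructions live in analogous subquotients built from iterated commutators $\alpha_\tau = [\tau,\alpha]$, each of which sits in a known filtration degree of $F_{i/2}\SS_2$. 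The trace $\tr_{C_3}$ in the displayed formula then arises from symmetrizing a candidate correction under $\omega$-conjugation, exactly as in the averaging operation of \cref{lem:N2partial}; this symmetrization is forced by the invariance requirement under $PC_6 = \langle -1,\omega\rangle$.

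The main obstacle will be the explicit bookkeeping: at each stage the candidate correction must simultaneously kill the obstruction in the correct graded piece, sit in the required depth of the mixed filtration, and remain $PC_6$-invariant. Verifying that the fully assembled element yields $\thetaclass(e-\alpha)e_0$ inside $((IPK^1)^7, 2(IPK^1)^3, 4(IPK^1), 8)\cdot \sD_0$ requires expanding products of elements of $F_{i/2}\SS_2$ through many iterated commutator identities and tracking $2$-adic and $IPK^1$-adic valuations simultaneously. This lengthy but direct verification is carried out in \cite[Thm.~3.4.5]{BeaudryRes}, which we invoke.
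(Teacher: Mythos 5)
The paper provides no proof of this theorem: it is stated as an import, with the citation \cite[Thm.~3.4.5]{BeaudryRes} built directly into the theorem header (together with the implicit adaptation from $\SS_2^1$ to $P\SS_2^1$ noted at the start of \cref{sec:algres}). Your proposal, after sketching the successive-approximation strategy, ultimately does the same thing — it defers the actual verification to \cite[Thm.~3.4.5]{BeaudryRes} — so in substance your approach and the paper's are identical. The high-level commentary you add (obstructions living in $H_0(PK^1, M_1) \cong H_1(PK^1,\W)$, corrections built from the $\alpha_\tau$, symmetrization under conjugation by $\omega$ producing the $\tr_{C_3}$) is a reasonable account of the method used in the cited source, though it is not checked against that source here and is not needed by the present paper. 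One small slip: you write $PC_6 = \langle -1, \omega\rangle$, but $PC_6$ is the image of $C_6 = \langle -1,\omega\rangle$ in $P\SS_2^1$, and since $-1$ is killed in the quotient, $PC_6 \cong C_3$ is generated by the image of $\omega$ alone; this does not affect the argument.
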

We use this to give our best approximation for $\delta_2$ and $\delta_2^\phi$, which also proves the simpler congruences of  \cref{thm:classicdualres} and in \cref{thm:updatedualres} modulo the ideal 
\[ \Jideal=(2, I  PK^1 \cdot I  PS_2^1,  I  PS_2^1 \cdot I  PK^1 )   .\]
Note that $\Iideal\subset\Jideal$.
\begin{cor}\label{cor:finalbestapproxdelta2}
Modulo $\Iideal$,
\begin{align*}
\delta_2 & \equiv e + \alpha+i+j+k -\alpha_i-\alpha_j -\alpha_k \\
& -\frac{1}{3} \tr_{C_3}\bigg((e-\alpha_i)(j-\alpha_j)+(e-\alpha_i\alpha_j) (k-\alpha_k)\bigg) .
\end{align*}
Modulo $\Jideal$, 
\[ \delta_2e_1 \equiv (\alpha +i+j+k)e_1.\]
\end{cor}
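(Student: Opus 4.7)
The plan is to deduce the first claim from \cref{cor:delta2better}(a), which gives $\delta_2 e_1 \equiv \thetaclass e_1 \pmod{\Iideal}$, together with the explicit formula for $\thetaclass$ recalled in \cref{thm:thetasuperprecise}. The first step is to verify that the error ideal $((IPK^1)^7, 2(IPK^1)^3, 4IPK^1, 8)$ appearing there is contained in $\Iideal$: each of its generators sits in one of $(IPK^1)^2$, $2IPK^1$, or $(4)$, which are generators of $\Iideal$. Substituting the formula for $\thetaclass$ then reduces the first claim to showing that the third summand $(e-\alpha_i\alpha_j\alpha_k)(e+\alpha)$ inside $\tr_{C_3}$ is absorbed by $\Iideal$. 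This is the main calculation of the proof: rewriting $e+\alpha = 2e - (e-\alpha)$ gives
\[
(e-\alpha_i\alpha_j\alpha_k)(e+\alpha) \;=\; 2(e-\alpha_i\alpha_j\alpha_k) \;-\; (e-\alpha_i\alpha_j\alpha_k)(e-\alpha),
\]
whose first summand lies in $2IPK^1$ and whose second lies in $(IPK^1)^2$, both being generators of $\Iideal$. Since $PK^1$ and $PS_2^1$ are normal in $P\SS_2^1$, the ideal $\Iideal$ is two-sided, hence invariant under $\tr_{C_3}$, completing the first claim.

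For the second claim modulo $\Jideal$, I would start from $\Iideal \subset \Jideal$, which is immediate from comparing generating sets, and carry over the mod $\Iideal$ formula. The two remaining summands in $\tr_{C_3}$ each factor as an element of $IPK^1$ times an element of $IPS_2^1$ (using $\alpha,\alpha_l \in PK^1$ and $i,j,k \in PS_2^1$), and therefore lie in $IPK^1 \cdot IPS_2^1 \subset \Jideal$; so the trace term disappears. What remains is to prove $e-\alpha_i-\alpha_j-\alpha_k \in \Jideal$. For each $l \in \{i,j,k\}$, the commutator identity $\alpha_l = l\alpha l^{-1}\alpha^{-1}$ rearranges to
\[
(\alpha_l - e)\,\alpha l \;=\; l\alpha - \alpha l \;=\; (l-e)(\alpha-e) \;-\; (\alpha-e)(l-e),
\]
and the right side lies in $IPS_2^1\cdot IPK^1 + IPK^1 \cdot IPS_2^1 \subset \Jideal$. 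Since $\alpha l$ is a unit in the group ring and $\Jideal$ is two-sided, this forces $e-\alpha_l \in \Jideal$. Combined with $2 \in \Jideal$,
\[
e - \alpha_i - \alpha_j - \alpha_k \;=\; -2e + (e-\alpha_i) + (e-\alpha_j) + (e-\alpha_k) \;\in\; \Jideal,
\]
as required.

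The main technical point throughout is the two-sidedness of the ideals $\Iideal$ and $\Jideal$, which relies on $PK^1$ and $PS_2^1$ being normal subgroups of $P\SS_2^1$; the rest of the argument is careful bookkeeping with the given generating sets.
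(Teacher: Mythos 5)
Your proof is correct, and for the first congruence it is essentially the paper's own argument (checking that the error ideal of Theorem~\ref{thm:thetasuperprecise} sits inside $\Iideal$, then absorbing the third summand of $\tr_{C_3}$ into $\Iideal$ using $2IPK^1$ and $(IPK^1)^2$). For the second congruence, you and the paper both need to show $e-\alpha_\tau\in\Jideal$ for $\tau\in\{i,j,k\}$, but you take a slightly different route: you start from the commutator rearrangement $(\alpha_\tau-e)\alpha\tau = \tau\alpha-\alpha\tau = (\tau-e)(\alpha-e)-(\alpha-e)(\tau-e)$ and then divide by the unit $\alpha\tau$, appealing to two-sidedness of $\Jideal$, whereas the paper uses the identity $(\tau-\alpha_\tau)(e-\alpha) = (e-\alpha_\tau\alpha)(\tau-e) + (e-\alpha_\tau)$, which isolates $e-\alpha_\tau$ as a signed sum of terms visibly in $IPS_2^1\cdot IPK^1$ and $IPK^1\cdot IPS_2^1$ without needing to invert anything. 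The two identities are trivially interconvertible, and both hinge on the same normality observations that make $\Jideal$ two-sided, so the difference is essentially cosmetic; your version has the small advantage of not requiring the ad hoc rearrangement, at the cost of explicitly invoking the unit $\alpha\tau$.
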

\begin{proof}
We have
\[((IPK^1)^7, 2(IPK^1)^3, 4 (IPK^1), 8)\subset \Iideal \subset \Jideal .\] 
Since $(2IPK^1, (IPK^1)^2)\subset \Iideal$,  
\[(e-\alpha_i\alpha_j\alpha_k)(e+\alpha)\equiv  (e-\alpha_i\alpha_j\alpha_k)(e-\alpha) \equiv 0 \mod \Iideal,\] 
so we get the first claim.
Working modulo $\Jideal$  and using the fact that
\begin{align*}
 (\tau-\alpha_t)(e-\alpha) = (e-\alpha_\tau \alpha)(\tau-e)+(e-\alpha_\tau)
\end{align*}
in $\Z_2[\![P\mathbb{S}_2^1]\!]$, the formula simplifies to
\[ \delta_2e_1 \equiv (\alpha +i+j+k)e_1 \mod \Jideal. \qedhere\]
\end{proof}


\section{Topological duality resolution}\label{sec:topres}
In this section, we use the resolution  of \cref{thm:updatedualres} to construct a resolution of spectra in the sense of \cite{HennRes}. We first recall the definition.
\begin{defn}[{\S 3.3.1 of \cite{HennRes}}]\label{defn:resofspectra}
A \emph{resolution of spectra} is a sequence of spectra
\[ \xymatrix{ \ast \ar[r] &  X_{-1} \ar[r] &  X_{0} \ar[r] &  X_{1} \ar[r]   & \cdots  }\]
such that 
\begin{enumerate}[(1)]
\item the composite of any two consecutive maps is null, and
\item for every $i\geq 0$,  the map $X_i \to X_{i+1}$ factors as 
\[X_i \to C_i \to X_{i+1}\]
 such that, letting $C_{-1}:=X_{-1}$, the sequences $C_{i-1} \to X_i \to C_i$ are cofiber sequences. 
\end{enumerate}
We say that the resolution has \emph{length $n$} if $X_n \simeq C_n$ and $X_i\simeq \ast$ for every $i>n$.
\end{defn}

The first step to constructing a resolution of spectra from the algebraic duality resolution is to produce a corresponding sequence in the category of Morava modules. We begin by noting that any $P\GG_2^1$-module is a $\GG_2^1$ module by restricting along the quotient. Letting $M_0=\W$ and $\partial_0 = \varepsilon$, the sequences \eqref{eq:M1}, \eqref{eq:M2} and \eqref{eq:M3}  give corresponding short exact sequences
\begin{align}\label{eq:sesforG21}
\xymatrix{0 \ar[r] &  M_{p+1} \ar[r] &  \sD_p \ar[r]^-{\partial_p} &  M_p \ar[r] & 0 }, \quad \quad 0\leq p\leq 2
\end{align}

Recall from Proposition 4.1 of \cite{HennCent} that if $\phi \colon G \to \Gal$  is a homomorphism with kernel $S$, and $P$ is a Galois-twisted $p$-profinite $G$-module, then if $P$ is $\W[\![S]\!]$-projective, then it is $\W[\![G]\!]$-projective. Applying this to $\phi$ the identity from $\Gal$ to itself, it allows us to conclude that Galois-twisted $p$-profinite projective $\W$-modules are $\W[\Gal]$-projective.

\begin{lemma}\label{lem:ses-split}
Each of the short exact sequences in \eqref{eq:sesforG21} is a split short exact sequence of projective $\WW[\Gal]$-modules.
\end{lemma}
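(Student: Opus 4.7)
The goal is to show that every module appearing in \eqref{eq:sesforG21} is projective as a $\W[\Gal]$-module, from which the splittings follow automatically. The projectivity will be deduced from $\W$-projectivity via Proposition 4.1 of \cite{HennCent}, applied with $\phi = \id_{\Gal}$, so that the kernel $S$ is trivial and $\W[\![S]\!] = \W$. The only input beyond this is some bookkeeping about tensor products.

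First I observe that each $\sD_p = \WG{P\G_2^1/H_p}$ for $H_p$ a finite subgroup (namely $PG_{48}$, $PG_{12}$ or $PG_{48}'$). The remark following \cref{def:completedGset} gives an isomorphism of $\W$-modules $\sD_p \cong \W \otimes_{\Z_2} \Z_2[\![P\G_2^1/H_p]\!]$, and since $\Z_2[\![P\G_2^1/H_p]\!]$ is a profinite free $\Z_2$-module, each $\sD_p$ is a profinite free $\W$-module. The same observation applies to $M_3 \cong \W {\uparrow}^{P\G_2^1}_{PG_{48}'}$. I then argue by induction on $p$ that each $M_p$ is $\W$-projective. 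The base case $M_0 = \W$ is immediate, and assuming $M_p$ is $\W$-projective, the short exact sequence $0 \to M_{p+1} \to \sD_p \to M_p \to 0$ splits as a sequence of $\W$-modules, exhibiting $M_{p+1}$ as a direct summand of the $\W$-projective module $\sD_p$.

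Next I apply Proposition 4.1 of \cite{HennCent} with $\phi = \id_{\Gal}$ to conclude that each $\sD_p$ and each $M_p$, being a Galois-twisted $p$-profinite $\W$-projective module, is $\W[\Gal]$-projective. To make sense of the splitting statement, I use that $\Gal \subset P\G_2^1$ is canonically a subgroup via the splitting $P\G_2^1 \cong P\SS_2^1 \rtimes \Gal$ recorded earlier in \cref{sec:background}, so that $\W[\Gal] = \WSG{\Gal}$ sits inside $\WG{P\G_2^1}$ as a twisted subring, and the sequences \eqref{eq:sesforG21} restrict to sequences of $\W[\Gal]$-modules. The $\W[\Gal]$-projectivity of each $M_p$ then provides a $\W[\Gal]$-linear section of the surjection $\sD_p \twoheadrightarrow M_p$, yielding the desired splitting. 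The main input is Proposition 4.1 of \cite{HennCent}; without this bridge from $\W$-projective to $\W[\Gal]$-projective, one would have to construct the splittings by hand via Galois descent / Morita equivalence between $\W[\Gal]$-modules and $\Z_2$-modules.
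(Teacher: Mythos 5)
Your proposal is correct and takes essentially the same approach as the paper: reduce $\W[\Gal]$-projectivity to $\W$-projectivity via Proposition 4.1 of \cite{HennCent} (with $\phi=\id_{\Gal}$), then propagate projectivity to the $M_p$ inductively through the short exact sequences, so each sequence splits because its cokernel is projective. The only cosmetic difference is that you run the induction at the $\W$-module level first and then upgrade everything to $\W[\Gal]$ in one step, whereas the paper applies the bridge up front and runs the induction directly over $\W[\Gal]$; both orderings are valid and the paper leaves implicit the point you make explicit about restricting along $\Gal\hookrightarrow P\G_2^1$.
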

\begin{proof}
In the short exact sequence for $p=0$, the middle and right terms are projective $\WW[\Gal]$-modules (since they are projective $\W$-modules). So, the sequence splits and the left term $M_1$ is also projective, because it is a direct summand of the projective module $\sD_0$. Now repeat this argument inductively in $p$ to arrive at the conclusion.
\end{proof}

Recall (\cite[Def. 1.4.1]{Weibel}) that a complex $(C_n, d_n \colon C_n \to C_{n-1})$ is called split exact if there exist maps $s_n\colon C_{n} \to C_{n+1}$ such that $d_n=d_ns_{n-1}d_n$. It is straightforward to show that if a long exact sequence is spliced from split short exact sequences, then it is split exact:  Splitting maps can be taken be the compositions of splitting maps for the short exact sequences.

\begin{corollary}
The long exact sequence of \Cref{thm:updatedualres} 
\begin{equation}\label{eq:alg-res}
\xymatrix{
0 \ar[r] & \sD_3 \ar[r]^-{\partial_3} & \sD_2  \ar[r]^-{\partial_2} & \sD_1   \ar[r]^-{\partial_1} & \sD_0 \ar[r]^-{\varepsilon}  & \W \ar[r] & 0 }
 \end{equation}
where $\sD_0  \cong\W {\uparrow}^{\G_{2}^1}_{G_{48}} $, $\sD_1 \cong\sD_2 \cong\W {\uparrow}^{\G_{2}^1}_{G_{12}} $ and  $\sD_3  \cong\W {\uparrow}^{\G_{2}^1}_{G_{48}'} $
is  split exact as a sequence of projective $\WW[\Gal]$-modules.
\end{corollary}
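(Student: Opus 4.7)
The plan is to observe that the long exact sequence \eqref{eq:alg-res} is, essentially by construction, obtained by splicing together the short exact sequences \eqref{eq:sesforG21} for $p=0,1,2$, where $M_1, M_2, M_3$ are the kernels/images identified in Lemmas~\ref{lem:N0}, \ref{lem:N1} and \ref{lem:M3}. So the corollary will follow once we know that (i) each of these short exact sequences splits in the category of $\W[\Gal]$-modules, and (ii) the splittings can be patched together into splittings of the full complex.

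For step (i), I would simply invoke \cref{lem:ses-split}, which already states that each of the three short exact sequences in \eqref{eq:sesforG21} is a split short exact sequence of projective $\W[\Gal]$-modules. In particular, all of $\sD_0, \sD_1, \sD_2, \sD_3$ and the intermediate modules $M_1, M_2, M_3$ are projective over $\W[\Gal]$, and $\W$ itself is (trivially) projective over $\W[\Gal]$ via the standard projection.

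For step (ii), I would use the elementary fact recalled right before the statement of the corollary: if $s_p' \colon M_p \to \sD_p$ splits $\partial_p \colon \sD_p \twoheadrightarrow M_p$ and $\iota_{p+1} \colon M_{p+1} \hookrightarrow \sD_p$ is the inclusion, then the composite $s_p := s_{p-1}' \circ \pi_p \colon \sD_p \to \sD_{p-1}$, where $\pi_p \colon \sD_p \twoheadrightarrow M_p$ is the surjection factoring $\partial_p$, provides a system of $\W[\Gal]$-linear splittings for the complex in the sense of \cite[Def.~1.4.1]{Weibel}. A direct verification $\partial_p s_{p-1} \partial_p = \partial_p$ then shows \eqref{eq:alg-res} is split exact as a complex of $\W[\Gal]$-modules.

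The main (and essentially only) obstacle is bookkeeping: I do not expect any substantive difficulty because the hard work has been packaged into \cref{lem:ses-split}. The content there in turn relies on Proposition~4.1 of \cite{HennCent}, which ensures that a Galois-twisted profinite module that is $\W$-projective is automatically $\W[\Gal]$-projective, applied to the modules $\sD_p = \W\!\uparrow_H^{\G_2^1}$ for $H$ a finite subgroup, whose underlying $\W$-modules are clearly projective. The rest is the standard homological algebra fact that a splicing of split short exact sequences of modules over any ring is split exact.
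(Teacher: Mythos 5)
Your approach is precisely the paper's: invoke \cref{lem:ses-split} for the termwise $\W[\Gal]$-projectivity and the splitting of each short exact sequence in \eqref{eq:sesforG21}, then appeal to the standard fact that a complex spliced from split short exact sequences is split exact (the paper simply asserts this, citing \cite[Def.~1.4.1]{Weibel}, without writing down the contraction). One small caveat: your explicit formula for the contraction is garbled --- you write $s_p := s_{p-1}' \circ \pi_p \colon \sD_p \to \sD_{p-1}$, but $\pi_p$ lands in $M_p$ while $s_{p-1}'$ has domain $M_{p-1}$, so the composite does not typecheck, and in any case Weibel's contraction must raise degree, i.e.\ go $\sD_{p-1} \to \sD_p$. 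The correct map is $s_{p-1} := \sigma_p \circ \rho_{p-1} \colon \sD_{p-1} \to M_p \to \sD_p$, where $\rho_{p-1} \colon \sD_{p-1} \to M_p$ is a retraction of the inclusion $\iota_p$ and $\sigma_p \colon M_p \to \sD_p$ is a section of $\pi_p$; then $\partial_p s_{p-1} \partial_p = \iota_p (\pi_p \sigma_p)(\rho_{p-1}\iota_p)\pi_p = \iota_p\pi_p = \partial_p$. The substance of the argument is otherwise sound and matches the paper.
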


Next, we induce the short exact sequences \eqref{eq:sesforG21} to sequences of $\W[\![\G_2]\!]$-modules by applying the functor $\W[\![\G_2]\!]\otimes_{\W[\![\G_2^1]\!]}(-)$ from the category $\Mod^\phi(\GG_2^1)$ to $\Mod^\phi(\GG_2)$. We denote $N_i=\W[\![\G_2]\!]\otimes_{\W[\![\G_2^1]\!]} M_i$ and introduce notation for the maps as follows:
\begin{gather}\label{eq:inducedSES}
\begin{aligned} 
\xymatrix{0 \ar[r] &  N_1  \ar[r]^-{\beta_1} & \W {\uparrow}^{\G_{2}}_{G_{48}}  \ar[r]^-{\alpha_0} & \W{\uparrow}^{\G_{2}}_{\G_{2}^1} \ar[r] &   0}\\
\xymatrix{0 \ar[r] &   N_2 \ar[r]^-{\beta_2}  & \W {\uparrow}^{\G_{2}}_{G_{12}}   \ar[r]^-{\gamma_1}  & N_1 \ar[r] &   0}\\
\xymatrix{0 \ar[r] & \W {\uparrow}^{\G_{2}}_{G_{48}}  \ar[r]^-{\alpha_3} &  \W {\uparrow}^{\G_{2}}_{G_{12}}  \ar[r]^-{\gamma_2} &  N_2 \ar[r] &   0}
\end{aligned}
\end{gather}

Then we have the split exact sequence 
\begin{equation}\label{eq:alg-res-2}
 0 \to \W {\uparrow}^{\G_{2}}_{G_{48}'} \xra{\alpha_3}\W {\uparrow}^{\G_{2}}_{G_{12}} \xra{\alpha_2}\W {\uparrow}^{\G_{2}}_{G_{12}}   \xra{\alpha_1}\W {\uparrow}^{\G_{2}}_{G_{48}}  \xra{\alpha_0} \W{\uparrow}^{\G_{2}}_{\G_{2}^1} \to 0 
 \end{equation}
which is obtained by splicing the short exact sequences \eqref{eq:inducedSES} and where $\alpha_i= \beta_i \circ \gamma_i$ for $i=2,3$.

\begin{theorem}\label{cor:Ehomologystart}
There exists a split exact sequence of Morava modules spliced from split short exact sequences
\begin{equation} \label{eq:MM}
\xymatrix@C=1pc{
&&&&P_2 \ar[rd]^{\gamma_2} &\\
E_*E^{h\GG_2^1} \ar[r]^{\widetilde\alpha_0}_\subset &E_*E^{hG_{48}} \ar[rd]_{\widetilde\beta_1} \ar[rr]^{\widetilde\alpha_1} &&E_*E^{hG_{12}}\ar[rr]^{\widetilde\alpha_2} \ar[ru]^{\widetilde\beta_2} &&E_*E^{hG_{12}}\ar@{->>}[rr]^{\widetilde\alpha_3} \ar[dr]_-{\widetilde\beta_3} &&E_*E^{hG_{48}}\\
&&P_1 \ar[ru]_{\widetilde\gamma_1} &&  && P_3 \ar[ur]_{\widetilde\gamma_3}^-{\cong}
 }
\end{equation}
where $P_i=\Hom_{\WW[\Gal]}(N_i, E_*)$ and $\widetilde\alpha_i$, $\widetilde\beta_i$ and $\widetilde\gamma_i$ are duals of the maps with the same names in \eqref{eq:inducedSES} and \eqref{eq:alg-res-2}.
\end{theorem}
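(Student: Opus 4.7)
The plan is to apply the functor $\Hom_{\W[\Gal]}(-, E_*)$ to the algebraic duality resolution \eqref{eq:alg-res-2} and to identify each resulting term with the corresponding Morava module. First, I would observe that \eqref{eq:alg-res-2} splices together from the three short exact sequences \eqref{eq:inducedSES}, with the middle and final maps factoring as $\alpha_i = \beta_i \circ \gamma_i$. Repeating the argument given in \cref{lem:ses-split} for the $\G_2^1$-version (using that $\W {\uparrow}_{H}^{\G_2^1}$ and $\W {\uparrow}_{H}^{\G_2}$ are projective as $\W[\Gal]$-modules), each of \eqref{eq:inducedSES} is a split short exact sequence of projective $\W[\Gal]$-modules.

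Next, I would apply $\Hom_{\W[\Gal]}(-, E_*)$ to each of these three short exact sequences. Since they are split as $\W[\Gal]$-modules, exactness is preserved, yielding three short exact sequences with $E_*$-module splittings. Setting $P_i := \Hom_{\W[\Gal]}(N_i, E_*)$ for $i = 1, 2$ and $P_3 := \Hom_{\W[\Gal]}(\W {\uparrow}_{G_{48}'}^{\G_2}, E_*)$, these three short exact sequences assemble into the diagram \eqref{eq:MM}, provided each term is correctly identified with the appropriate Morava module.

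The key identification required is that for a closed subgroup $H \subseteq \G_2$, there is a natural isomorphism of Morava modules
\[ \Hom_{\W[\Gal]}(\W {\uparrow}_{H}^{\G_2}, E_*) \cong E_*E^{hH}, \]
where the Morava module structure on the left arises from the residual $\G_2$-action on $\W {\uparrow}_{H}^{\G_2}$. For the finite subgroups $G_{48}$, $G_{12}$, and $G_{48}'$ this is the classical Devinatz--Hopkins identification, used in its Galois-twisted formulation throughout \cite{BobkovaGoerss}, \cite{HennCent} and \cite{BBGHPS_Pic}; for the closed subgroup $\G_2^1$ the analogous statement is standard in this literature. Finally, the isomorphism $\widetilde\gamma_3 \colon P_3 \xrightarrow{\cong} E_*E^{hG_{48}}$ is furnished by conjugation by $\pi$, which provides a homotopy equivalence $E^{hG_{48}'} \simeq E^{hG_{48}}$ since $G_{48}' = \pi G_{48}\pi^{-1}$. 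The main obstacle is the careful bookkeeping of the Galois-twisting in this identification; otherwise the argument is formal from the split exactness of \eqref{eq:alg-res-2} established by the preceding corollary.
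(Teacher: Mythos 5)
Your proposal is correct and follows essentially the same route as the paper: apply $\Hom_{\W[\Gal]}(-,E_*)$ to the spliced short exact sequences, use split exactness over $\W[\Gal]$ (from \cref{lem:ses-split} and additivity of the induction functor) to preserve exactness, and invoke the identification $E_*E^{hG}\cong\Hom_{\W[\Gal]}(\W{\uparrow}^{\G_2}_G,E_*)$ for closed $G\subseteq\G_2$, which the paper cites as Corollary~5.5 of \cite{HennCent}. Your added remark that $E^{hG_{48}'}\simeq E^{hG_{48}}$ via conjugation by $\pi$ supplies a detail the paper states without explanation, but the substance of the argument is the same.
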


\begin{proof}
We apply the functor $\Hom_{\W[\Gal]}(-, E_*)$ of continuous homomorphisms of Galois-twisted $\Gal$-modules, to the sequences of \eqref{eq:inducedSES} and \eqref{eq:alg-res-2}. We use the fact that $\Hom_{\WW[\Gal]}(-, E_*)$ is an additive functor, hence it preserves split exact sequences.  
Finally, we use the fact, see  \cite[Corollary 5.5]{HennCent}, that for any closed subgroup $G\subseteq \G_2$ 
there is an isomorphism of Morava modules
\[
E_*E^{hG} \cong \Hom_{\W[\Gal]}(\W{\uparrow}^{\mathbb{G}_2 }_{G}  , E_*)
\]
to finish the proof.  Note that for the last term, we use that $E^{hG_{48}} \simeq E^{hG_{48}'}$, and from this point onward, the  difference between $G_{48}$ and $G_{48}'$ is no longer relevant.
\end{proof}

Now we would like to realize the resolution of Morava modules \eqref{eq:MM} as a resolution of spectra.  We will need the following lemma.
\begin{lemma}\label{lem:Hurewicz}
Let $E=E_2(\Gamma)$ be Morava $E$-theory and $\mathcal{E}\mathcal{G}_2$ the associated category of Morava modules. Let $X_{-1} = E^{h\GG_2^1}$, $X_0=X_3=E^{hG_{48}}$,  $X_1=X_2=E^{hG_{12}}$. Then
\begin{enumerate}[(1)]
\item  the $E$-Hurewicz homomorphisms
\[
\pi_0 F(X_i , X_{i+1}) \to  \Hom_{\mathcal{E}\mathcal{G}_2}(E_*X_i , E_*X_{i+1})
\]
are surjective for $i=-1,0,1,2$, and
\item  the $E$-Hurewicz homomorphisms
\[
\pi_0 F(X_i , X_{i+2}) \to  \Hom_{\mathcal{E}\mathcal{G}_2}(E_*X_i , E_*X_{i+2})
\]
are 
 injective for $i=-1,0,1$.
\end{enumerate}
\end{lemma}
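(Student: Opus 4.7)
The plan is to use the $K(2)$-local $E$-based Adams spectral sequence
\[
E_2^{s,t} = \Ext^{s,t}_{\mathcal{E}\mathcal{G}_2}(E_*X_i,\, E_*X_{i+j}) \Longrightarrow \pi_{t-s} F(X_i, X_{i+j}),
\]
noting that each $X_i$ is a $K(2)$-local spectrum of the form $E^{hH}$ for a closed subgroup $H \subseteq \GG_2$, so the spectral sequence converges strongly. The $E$-Hurewicz map in the statement is precisely the edge homomorphism $\pi_0 F(X_i, X_{i+j}) \twoheadrightarrow E_\infty^{0,0} \hookrightarrow E_2^{0,0}$. Hence surjectivity for $j=1$ amounts to killing all differentials $d_r \colon E_r^{0,0} \to E_r^{r, r-1}$ for $r \geq 2$, which follows once one shows $\Ext^{r,r-1}_{\mathcal{E}\mathcal{G}_2}(E_*X_i, E_*X_{i+1}) = 0$ for $r \geq 2$. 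Injectivity for $j=2$ amounts to killing the higher Adams filtration of $\pi_0 F(X_i, X_{i+2})$, which reduces to $\Ext^{s,s}_{\mathcal{E}\mathcal{G}_2}(E_*X_i, E_*X_{i+2}) = 0$ for $s \geq 1$.

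To analyze these Ext groups I would use two standard identifications. First, in every case appearing in the lemma the target is $E^{hG}$ for a \emph{finite} subgroup $G \in \{G_{48}, G_{12}\}$, and there is a change-of-rings isomorphism
\[
\Ext^s_{\mathcal{E}\mathcal{G}_2}(E_*X_i,\, E_*E^{hG}) \cong H^s_c(G,\, E_*X_i).
\]
Second, $E_*X_i \cong \map_c(\GG_2/H_i, E_*)$ as a continuous $\GG_2$-module, where $H_i$ is the defining subgroup of $X_i$. A double-coset decomposition $\GG_2 = \coprod_c G g_c H_i$ then yields a Shapiro-type splitting
\[
H^s_c(G, \map_c(\GG_2/H_i, E_*)) \cong \bigoplus_c H^s_c(G \cap g_c H_i g_c^{-1},\, E_*).
\]
For $i \geq 0$, $H_i$ is finite and this reduces to a finite sum of cohomologies of finite subgroups of $\GG_2$; for $i = -1$, $H_i = \GG_2^1$ and the double cosets are indexed by $G\backslash\bigl(\Z_2^\times/(\pm 1)\bigr)$, but the stabilizers $G \cap g_c \GG_2^1 g_c^{-1}$ are again finite (each equal to $G \cap \GG_2^1$ up to conjugacy).

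The main obstacle is verifying the bidegree vanishing of $H^s_c(F, E_*)$ for the finite subgroups $F$ that arise. These are precisely the subgroups appearing in the proof of the analogous Hurewicz statement for the $\SS_2^1$-resolution in \cite{BobkovaGoerss}: the double-coset stabilizers all lie in the conjugacy classes of $\{e, C_2, C_4, C_6, C_3, G_{24}, G_{12}, G_{48}\}$ or their intersections with $\GG_2^1$. I would import those computations directly, checking that the passage from $\SS_2^1$ to $\GG_2^1$ (equivalently, the enlargement of coefficients from $\Z_2$ to $\W$) affects cohomology only through a Lyndon--Hochschild--Serre spectral sequence for the extension by $\Gal$ that collapses, since $\W \cong \Z_2[\Gal]$ as a $\Gal$-module forces the extra cohomology to be concentrated in $\Gal$-degree zero. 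Summing the vanishing contributions over double cosets then delivers the required vanishing in bidegrees $(r,r-1)$ for $r \geq 2$ and $(s,s)$ for $s \geq 1$, yielding both surjectivity and injectivity.
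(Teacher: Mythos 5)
Your approach matches the paper's, whose proof is simply a citation to the proof of Proposition 5.11 of \cite{HennCent}, where exactly this $E$-based Adams spectral sequence argument---change of rings to $H^*_c(G, -)$ for the finite target group, double-coset reduction to cohomology of finite stabilizers, and the resulting bidegree vanishing---is carried out for $i\geq 0$, with the paper observing that the same argument extends to $i=-1$. One small technical correction: the Shapiro decomposition you write as a direct sum $\bigoplus_c$ should be a continuous (profinite) version, since the double-coset spaces $G\backslash\GG_2/H_i$ are uncountable profinite sets for all $i$, and for $i=-1$ one has $E_*E^{h\GG_2^1}\cong \map_c(\GG_2/\GG_2^1, E_*)$ with $\GG_2/\GG_2^1\cong\Z_2$ and $G\subseteq\GG_2^1$ acting trivially, giving $H^s_c(G,\map_c(\Z_2,E_*))\cong\map_c(\Z_2, H^s_c(G,E_*))$ rather than a sum; since continuous cohomology of the finite group $G$ commutes with such limits, the bidegree vanishing you invoke still propagates and the argument is unaffected.
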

\begin{proof}
Except for the case when $i=-1$, this is shown, and stated, in the proof of Proposition 5.11 of \cite{HennCent} (see (5.2) and (5.3)). However, the same arguments given in this proof also apply in the case $i=-1$.
\end{proof}

Now we proceed as in \cite[Lem. 3.23]{BobkovaGoerss}. We truncate the resolution of \Cref{cor:Ehomologystart}
\begin{equation}\label{eq:truncated-alg}
\xymatrix{E_*E^{h\G_2^1} \ar[r]^-{\alpha_0}  & E_*E^{hG_{48}} \ar[r]^-{\alpha_1}  & E_*E^{hG_{12}} \ar[r]^-{\alpha_2} &E_*E^{hG_{12}}}
\end{equation}
and realize this truncated resolution topologically.

\begin{lemma}\label{lem:truncated}
There exists a sequence of spectra
\begin{equation}\label{eq:truncated}
E^{h\G_2^1} \xra{\overline{\alpha}_0} E^{hG_{48}} \xra{\overline{\alpha}_1} E^{hG_{12}} \xra{\overline{\alpha}_2} E^{hG_{12}}
\end{equation}
whose $E_*$-homology is \eqref{eq:truncated-alg}, and such that
\begin{enumerate}
\item\label{it:first} the compositions $\overline{\alpha}_1\overline{\alpha}_{2}$ and $\overline{\alpha}_0\overline{\alpha}_{1}$ are null-homotopic, and 
\item\label{it:second} the first map $\overline{\alpha}_0$ induces a surjective homomorphism 
\[
\pi_* F(E^{hG_{48}}, E^{hG_{12}}) \twoheadrightarrow \pi_* F(E^{h\mathbb{G}_2^1}, E^{hG_{12}}).
\]
\end{enumerate}
\end{lemma}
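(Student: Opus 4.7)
The plan is to follow the strategy of the analogous argument in \cite{BobkovaGoerss} (specifically Lemma 3.23), which lifts the algebraic resolution term by term and then verifies the two conditions using the Hurewicz machinery provided by \cref{lem:Hurewicz}.

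First, I would invoke the surjectivity of the $E$-Hurewicz map in \cref{lem:Hurewicz}(1) three times: for $i=-1,0,1$, it allows us to lift the Morava-module maps $\widetilde{\alpha}_0,\widetilde{\alpha}_1,\widetilde{\alpha}_2$ appearing in \eqref{eq:MM} to maps of spectra $\overline{\alpha}_0,\overline{\alpha}_1,\overline{\alpha}_2$ with $E_*(\overline{\alpha}_i)=\widetilde{\alpha}_i$. This realizes the truncated sequence \eqref{eq:truncated} at the level of $E_*$-homology as required.

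Next, for condition (1), the composites $\widetilde{\alpha}_1\widetilde{\alpha}_0$ and $\widetilde{\alpha}_2\widetilde{\alpha}_1$ are zero because \eqref{eq:MM} is a (split) exact sequence of Morava modules. Hence $\overline{\alpha}_1\overline{\alpha}_0$ and $\overline{\alpha}_2\overline{\alpha}_1$ induce the zero map on $E_*$-homology. Applying the injectivity of the Hurewicz homomorphism from \cref{lem:Hurewicz}(2) with $i=-1$ and $i=0$ respectively forces these two composites to be null-homotopic.

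For condition (2), the key input is that $\widetilde{\alpha}_0$ is \emph{split} injective as a map of Morava modules. Indeed, \cref{lem:ses-split} shows the underlying sequence of $\WW[\Gal]$-modules is split, and the additive functor $\Hom_{\WW[\Gal]}(-,E_*)$ preserves splittings, so there is a Morava-module retraction $\widetilde{s}\colon E_*E^{hG_{48}}\to E_*E^{h\GG_2^1}$ with $\widetilde{s}\circ\widetilde{\alpha}_0=\id$. Given any class $[g]\in\pi_k F(E^{h\GG_2^1},E^{hG_{12}})$, set $\widetilde{f}:=E_*(g)\circ\widetilde{s}\colon E_*E^{hG_{48}}\to E_*\Sigma^k E^{hG_{12}}$, which satisfies $\widetilde{f}\circ\widetilde{\alpha}_0=E_*(g)$. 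Realize $\widetilde{f}$ topologically as $f\colon E^{hG_{48}}\to\Sigma^k E^{hG_{12}}$ via the surjectivity in \cref{lem:Hurewicz}(1) for $i=0$; then $E_*(f\circ\overline{\alpha}_0)=E_*(g)$, and the injectivity in \cref{lem:Hurewicz}(2) for $i=-1$ forces $f\circ\overline{\alpha}_0\simeq g$, giving the desired lift.

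The main technical point is that \cref{lem:Hurewicz} is phrased in terms of $\pi_0$ whereas condition (2) asks about all $\pi_*$; this is handled by applying the same lemma with $X_i$ replaced by $\Sigma^k X_i$, which is legitimate because the underlying spectral-sequence vanishing argument from \cite[Prop.~5.11]{HennCent} is insensitive to suspension. I do not expect any serious obstacle beyond careful bookkeeping of the splitting: the genuine work has already been done in constructing the algebraic resolution and in establishing the Hurewicz-type statements in \cref{lem:Hurewicz}.
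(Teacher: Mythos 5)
The construction of $\overline{\alpha}_1,\overline{\alpha}_2$ and the verification of condition (1) via \cref{lem:Hurewicz} match the paper (the paper simply takes $\overline{\alpha}_0$ to be the inclusion of fixed points instead of lifting $\widetilde\alpha_0$ via Hurewicz, a cosmetic difference), and your observation about replacing $X_i$ by $\Sigma^k X_i$ to pass from $\pi_0$ to $\pi_*$ is legitimate.

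However, your argument for condition (2) has a genuine gap. You assert that \cref{lem:ses-split} supplies a \emph{Morava-module} retraction $\widetilde{s}\colon E_*E^{hG_{48}}\to E_*E^{h\GG_2^1}$. It does not. The splitting furnished by \cref{lem:ses-split} is a section $\sigma\colon \W{\uparrow}^{\G_2}_{\G_2^1}\to\W{\uparrow}^{\G_2}_{G_{48}}$ of $\WW[\Gal]$-modules only, not of Galois-twisted $\WG{\G_2}$-modules. The functor $\Hom_{\WW[\Gal]}(-,E_*)$ is additive, so it does preserve $\WW[\Gal]$-split exactness, but the dual $\widetilde{s}=\Hom_{\WW[\Gal]}(\sigma,E_*)$ is then only $\WW[\Gal]$-linear (equivalently, $E_*$-linear); it is $\GG_2$-equivariant exactly when $\sigma$ is, which it isn't. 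This is precisely the content of the phrase ``split exact sequence of Morava modules'' in \cref{cor:Ehomologystart}: the terms and the differentials live in $\mathcal{E}\mathcal{G}_2$, but the contracting homotopies live only over $\WW[\Gal]$. Consequently $\widetilde{f}:=E_*(g)\circ\widetilde{s}$ is not a morphism in $\mathcal{E}\mathcal{G}_2$, so you cannot feed it into the Hurewicz surjectivity of \cref{lem:Hurewicz}(1), and the rest of the chain collapses. The paper proves condition (2) by appealing to the last part of the proof of Lemma 3.23 in \cite{BobkovaGoerss}, whose argument does not factor through a Morava-module splitting of $E_*\overline{\alpha}_0$; it instead analyzes the $E$-based Adams spectral sequences for $F(E^{hG_{48}},E^{hG_{12}})$ and $F(E^{h\GG_2^1},E^{hG_{12}})$ and uses the specific vanishing and identifications of the relevant $\Ext$-groups. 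To repair your proof you would need either a genuinely $\GG_2$-equivariant splitting of $E_*\overline{\alpha}_0$ (which is not available) or a cohomological argument controlling the failure of $\Hom_{\mathcal{E}\mathcal{G}_2}(\widetilde\alpha_0,E_*E^{hG_{12}})$ to be surjective.
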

\begin{proof}
We define the map $\overline{\alpha}_0\colon E^{h\GG_2^1} \to E^{hG_{48}}$ to be the inclusion of fixed points. Then we apply the surjectivity statements in  \Cref{lem:Hurewicz} to define the maps $\overline{\alpha}_1$ and $\overline{\alpha}_2$ and the injectivity statements to show that $\overline{\alpha}_1 \circ \overline{\alpha}_{2} \simeq 0$  and $\overline{\alpha}_0 \circ \overline{\alpha}_{1} \simeq 0$.
Part \eqref{it:second} is proved as in the last part of the proof of \cite[Lemma 3.23]{BobkovaGoerss}.
\end{proof}

\begin{corollary}\label{cor:tower-fib}
The sequence \eqref{eq:truncated} gives rise to a diagram
\begin{equation}\label{eq:tower}
\xymatrix@=1.5pc{
E^{h\GG_2^1} \ar[rr]^{\overline{\alpha}_0} \ar@{=}[dr] & &E^{hG_{48}} \ar[rd]_{\overline{\beta}_1} \ar[rr]^{\overline{\alpha}_1} &&E^{hG_{12}}\ar[rr]^{\overline{\alpha}_2} \ar[dr]_{\overline{\beta}_2} &&E^{hG_{12}} \ar[dr]_-{\overline{\beta}_3}\\
&C_0\ar[ur]_-{\bar{\alpha}_0}  &&C_1\ar[ru]_{\overline{\gamma}_1} \ar@{-->}[ll]^-{\delta_1}  &&C_2 \ar[ur]_{\overline{\gamma}_2}  \ar@{-->}[ll]^-{\delta_2}   &&C_3  \ar@{-->}[ll]^-{\delta_3} 
 }
\end{equation}
where $C_1$ is the cofiber  of $\overline{\alpha}_{0}$, $C_2$ is the cofiber of  $\overline{\gamma}_1$, $C_3$ is the cofiber of $\overline{\gamma}_2$ and $\overline{\alpha}_i=\overline{\beta}_i\overline{\gamma}_i$, and $\delta_i \colon C_{i}\to \Sigma C_{i-1}$ is the next map in the distinguished triangle.
Furthermore,
 $E_*C_i =P_i$ and $E_*\overline{\beta}_i=\beta_i, E_*\overline{\gamma}_i=\gamma_i$. 
\end{corollary}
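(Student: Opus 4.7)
The plan is to build the cofiber tower inductively, setting $C_0 := E^{h\GG_2^1}$ and then defining $C_1 := \mathrm{cofib}(\overline\alpha_0)$, $C_2 := \mathrm{cofib}(\overline\gamma_1)$, and $C_3 := \mathrm{cofib}(\overline\gamma_2)$ in turn. The Morava-module identifications will all come by comparing the long exact sequence in $E_*$ of each cofiber triangle with the split-exact sequence of \cref{cor:Ehomologystart}, which is the key algebraic input.

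First I would form $C_1 := \mathrm{cofib}(\overline\alpha_0)$ with $\overline\beta_1 \colon E^{hG_{48}} \to C_1$ the canonical map and $\delta_1 \colon C_1 \to \Sigma E^{h\GG_2^1}$ the boundary. Since $E_*\overline\alpha_0 = \widetilde\alpha_0$ is split injective, the long exact sequence in $E_*$ collapses to a short exact sequence, yielding $E_*C_1 \cong P_1$ and $E_*\overline\beta_1 = \widetilde\beta_1$ by comparison with \eqref{eq:MM}. Since $\overline\alpha_1 \overline\alpha_0 \simeq 0$ by part (1) of \cref{lem:truncated}, $\overline\alpha_1$ factors through $C_1$ as $\overline\alpha_1 = \overline\gamma_1 \overline\beta_1$; surjectivity of $\widetilde\beta_1$ then forces the identification $E_*\overline\gamma_1 = \widetilde\gamma_1$ uniquely.

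The main obstacle, and the one non-formal step, is verifying $\overline\alpha_2 \overline\gamma_1 \simeq 0$ before forming $C_2$, since \cref{lem:truncated} only gives the a priori weaker $\overline\alpha_2 \overline\alpha_1 \simeq 0$. The vanishing of $\overline\alpha_2 \overline\gamma_1 \circ \overline\beta_1 = \overline\alpha_2 \overline\alpha_1$ produces, via the cofiber sequence defining $C_1$, a factorization $\overline\alpha_2 \overline\gamma_1 = g \circ \delta_1$ for some $g \in \pi_{-1}F(E^{h\GG_2^1}, E^{hG_{12}})$. I would then invoke part (2) of \cref{lem:truncated} to lift $g$ along $\Sigma\overline\alpha_0$ to some $\tilde g \colon \Sigma E^{hG_{48}} \to E^{hG_{12}}$, giving $\overline\alpha_2 \overline\gamma_1 \simeq \tilde g \circ \Sigma\overline\alpha_0 \circ \delta_1 \simeq 0$, since $\Sigma\overline\alpha_0 \circ \delta_1 \simeq 0$ in the Puppe sequence.

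With this in hand, the remaining steps are routine. I set $C_2 := \mathrm{cofib}(\overline\gamma_1)$ with projection $\overline\beta_2$; the injectivity of $\widetilde\gamma_1$ gives $E_*C_2 \cong P_2$ and $E_*\overline\beta_2 = \widetilde\beta_2$. The now-available vanishing $\overline\alpha_2 \overline\gamma_1 \simeq 0$ supplies a factorization $\overline\alpha_2 = \overline\gamma_2 \overline\beta_2$, and surjectivity of $\widetilde\beta_2$ forces $E_*\overline\gamma_2 = \widetilde\gamma_2$. Finally, setting $C_3 := \mathrm{cofib}(\overline\gamma_2)$ with projection $\overline\beta_3$, injectivity of $\widetilde\gamma_2$ yields $E_*C_3 \cong P_3$ and $E_*\overline\beta_3 = \widetilde\beta_3$, completing the construction. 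The boundary maps $\delta_i$ are just those of the cofiber triangles and require no additional argument.
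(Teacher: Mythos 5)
Your proof is correct and follows essentially the same strategy as the paper's: define the cofibers iteratively, read off the Morava-module identifications from the split-exactness of the sequence in \cref{cor:Ehomologystart}, and handle the one non-formal step --- the vanishing of $\overline\alpha_2\overline\gamma_1$ --- by factoring the map through $\Sigma E^{h\GG_2^1}$ and lifting along $\Sigma\overline\alpha_0$ using \cref{lem:truncated}(2). Your concluding step is, if anything, cleaner: you finish directly by noting $\Sigma\overline\alpha_0 \circ \delta_1 \simeq 0$ in the Puppe sequence, whereas the paper closes this argument with a reference to \cref{lem:Hurewicz}, which in context is not strictly needed once one has the lifted map and the Puppe null-composite.
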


\begin{proof}
Define $C_1$ as the cofiber of $\overline{\alpha}_0$. Since $\overline{\alpha}_1\overline{\alpha}_0$ is nullhomotopic, we  get a map $\overline{\gamma}_1$ so that $\overline{\alpha}_1 = \overline{\gamma}_1\overline{\beta}_1 $. Define $C_2$ as the cofiber of $\overline{\gamma}_1$. 
We would like to show that $\overline{\alpha}_2 \overline{\gamma}_1\simeq 0$ in order to produce $\overline{\gamma}_2$. Consider the diagram where the top row is an unraveled fiber sequence 
\[
\xymatrix{
E^{h\mathbb{G}_2^1} \ar[r]^{\overline{\alpha}_0}  &E^{hG_{48}} \ar[d]_{\overline{\alpha}_1}\ar[r]^{\overline{\beta}_1} &C_1 \ar[r] \ar[dl]_{\overline{\gamma}_1} &\Sigma E^{h\mathbb{G}_2^1} \ar@{-->}[ddll]_{\widetilde{\gamma}_1} \ar[r]^{\Sigma \overline{\alpha}_0} &\Sigma E^{hG_{48}} \ar@{..>}[ddlll]^{\widetilde{\widetilde{\gamma}}_1} \\
& E^{hG_{12}} \ar[d]_{\overline{\alpha}_2} \\
& E^{hG_{12}}
}
\]
Since $\overline{\alpha}_2 \overline{\alpha}_1\simeq 0$, then $\overline{\alpha}_2 \overline{\gamma}_1\overline{\beta}_1\simeq 0$, so the map $\overline{\alpha}_2 \overline{\gamma}_1$ factors through  a map
\[\widetilde{\gamma}_1\colon\Sigma E^{h\mathbb{G}_2^1} \to E^{hG_{12}}.\] 
But by \Cref{lem:truncated}(2), we have that $\widetilde{\gamma}_1$ factors through a map 
\[\widetilde{\widetilde{\gamma}}_1 \colon \Sigma E^{hG_{48}} \to E^{hG_{12}},\] 
hence $\overline{\alpha}_2\overline{\gamma}_1 \simeq 0$ by \cref{lem:Hurewicz}.

Now consider the middle cofiber sequence in the diagram \eqref{eq:tower}
\[
C_1 \xra{\overline{\gamma}_1} E^{hG_{12}} \xra{\overline{\beta}_2} C_2 
\]
Since $\overline{\gamma}_1 \overline{\alpha}_2 \simeq 0$, there exists a factorization $\overline{\gamma}_2$ as in diagram \eqref{eq:tower}.  

The last claim follows from the fact that $E_*\overline{\alpha}_i=\alpha_i$.
\end{proof}

From \Cref{cor:tower-fib} it follows that  there is a resolution of spectra 
\[
E^{h\G_2^1}\to E^{hG_{48}} \to E^{hG_{12}} \to E^{hG_{12}} \to C_3.
\]
Equivalently, there is a tower of fibrations
\begin{align}\label{eq:toweroffib}
\xymatrix{
E^{h\GG_2^1} \ar[r] & Z_2 \ar[r] & Z_1 \ar[r] & E^{hG_{48}} \\
\Sigma^{-3}C_3 \ar[u]^-{i} & \Sigma^{-2}E^{hG_{12}}  \ar[u] &   \Sigma^{-1}E^{hG_{12}} \ar[u]
}
\end{align}
where $Z_{s-1}$ is related to $C_{s}$ via a cofiber sequence
\[ \Sigma^{-s}C_s \to E^{h\GG_2^1} \to Z_{s-1}. \]

In the rest of this paper, we will show that $C_3\simeq \Sigma^{48}E^{hG_{48}}$. We will be mirroring arguments of \cite{BobkovaGoerss}. To make it easier to follow that paper, we use the notation
\[X := C_3.\]
It follows from \cref{cor:tower-fib} that
 there is an isomorphism of Morava modules
 \begin{equation}\label{eq:MM-iso}
  E_* E^{hG_{48}} \xra[\cong]{\gamma_3^{-1}} P_3 =E_* X   
\end{equation} 
We then have the following result, which completes the construction of the topological duality resolution and the proof of \cref{thmintro:topres}.
 \begin{theorem}\label{thm:last}
 There is an equivalence 
\[\Sigma^{48}E^{hG_{48}} \xrightarrow{\simeq} X\]
 which induces the isomorphism of Morava modules \eqref{eq:MM-iso}.
\end{theorem}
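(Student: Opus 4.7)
The plan is to follow the template of \cite[Lemmas 3.23--3.28]{BobkovaGoerss}, where the analogous statement $C_3 \simeq \Sigma^{48} E^{hG_{24}}$ is proved for the $\SS_2^1$-resolution. We already have the Morava module isomorphism
\[\gamma_3^{-1}\colon E_* E^{hG_{48}} \xrightarrow{\cong} E_* X\]
from \eqref{eq:MM-iso}. The spectrum $E^{hG_{48}}$ carries a $48$-periodicity (a unit in $\pi_{48} E^{hG_{48}}$, coming from the classical $48$-fold periodicity of $E^{hG_{48}}$ at $p=2$), so there is an isomorphism of graded Morava modules $E_*\Sigma^{48} E^{hG_{48}} \cong E_*E^{hG_{48}}$. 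Composing with $\gamma_3^{-1}$ yields the candidate isomorphism
\[\psi\colon E_*\Sigma^{48}E^{hG_{48}} \xrightarrow{\cong} E_*X.\]

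Next I would lift $\psi$ to a map of spectra $f\colon \Sigma^{48}E^{hG_{48}} \to X$. Applying $F(\Sigma^{48}E^{hG_{48}},-)$ to the tower of fibrations \eqref{eq:toweroffib} produces a descent spectral sequence computing $[\Sigma^{48}E^{hG_{48}}, X]$ whose $E_1$-page involves $\Ext$-groups in the category of Morava modules from $E_*\Sigma^{48} E^{hG_{48}}$ into the terms of the resolution \eqref{eq:MM}. Following the strategy of \cite[Lemma 3.24]{BobkovaGoerss}, with small additional bookkeeping for the $\Gal$-twist, one shows that the higher filtration vanishes, so the edge map
\[\pi_0 F(\Sigma^{48}E^{hG_{48}}, X) \twoheadrightarrow \Hom_{\mathcal{E}\mathcal{G}_2}(E_*\Sigma^{48}E^{hG_{48}}, E_*X)\]
is surjective. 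Picking $f$ mapping to $\psi$, we have $E_*(f) = \psi$ an isomorphism; since both source and target are $K(2)$-local, $f$ is a $K(2)$-local equivalence.

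The main obstacle is justifying the particular suspension $\Sigma^{48}$ rather than some other element of the $K(2)$-local Picard group twisting $E^{hG_{48}}$. Morally, this is forced by the orientation of the Poincar\'e duality group $PK^1$ of dimension $3$ (\cref{lem:PD}) together with the dualizing data carried by $E^{hG_{48}}$; concretely, one compares the top-dimensional class in $E_* X$ produced by the resolution with the analogous class in $E_* \Sigma^{48} E^{hG_{48}}$ arising from the periodicity generator. Since the Galois twist does not disturb the underlying spectra, the degree calculation reduces to the $\SS_2^1$-calculation of \cite{BobkovaGoerss}, and the delicate verification that no exotic Picard element intervenes should carry over verbatim after restriction along $\GG_2^1 \to P\GG_2^1$ in the Galois-twisted setting of \cref{thm:updatedualres}.
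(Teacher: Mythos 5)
Your overall target is right, but the crucial lifting step has a genuine gap. You assert that, because ``the higher filtration vanishes,'' the edge homomorphism
\[
\pi_0 F(\Sigma^{48}E^{hG_{48}}, X) \twoheadrightarrow \Hom_{\mathcal{E}\mathcal{G}_2}(E_*\Sigma^{48}E^{hG_{48}}, E_*X)
\]
is surjective, so that the Morava module isomorphism $\psi$ lifts on the nose to a map of spectra $f$ with $E_*(f)=\psi$. This is not established anywhere: \cref{lem:Hurewicz} only gives surjectivity of the $E$-Hurewicz map for the specific adjacent pairs $X_i\to X_{i+1}$ appearing in the truncated resolution, and neither it nor the cited \cite[Lem.~3.23/3.24]{BobkovaGoerss} addresses maps $E^{hG_{48}}\to X=C_3$. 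The would-be spectral sequence also does not arise from the tower \eqref{eq:toweroffib}, which computes $\pi_*E^{h\GG_2^1}$ rather than $[\Sigma^{48}E^{hG_{48}},X]$; for the latter one needs the $K(2)$-local $E$-Adams spectral sequence, and its collapse (i.e., vanishing of higher $\Ext$ of Morava modules) is exactly the kind of fact that is hard and is not proved.

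The actual argument (following \cite[Cor.~5.3 and Thm.~5.8]{BobkovaGoerss}) sidesteps surjectivity of the full Hurewicz map. One only shows that for each $f(j)\Delta^2\in E_{48}^{G_{48}}$ there is a map $\phi_f\colon\Sigma^{48}E^{hG_{48}}\to X$ whose image under the composite with $\iota_*$, the Hurewicz map $H$, and the reduction $\epsilon$ modulo $(2,j)$ equals $f(0)$; this is much weaker than lifting $\psi$. One then takes $\phi_1$ with $\epsilon(H\iota_*(\phi_1))=1$ and proves it is an equivalence by a Nakayama-type argument: apply $E^0(-)$, restrict along $K\subset\GG_2$ using $K\xrightarrow{\cong}\GG_2/G_{48}$, and check $E^0(\phi_1)$ is an isomorphism after reducing modulo the maximal ideal $\mathfrak{m}_K$ of $E^0[\![K]\!]$, which it is because the chosen map hits $1\in\F_4$. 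Your closing paragraph about exotic Picard elements is also somewhat misplaced: once a candidate map is produced and shown to be an $E_*$-isomorphism, no further Picard-theoretic verification is needed; the real content is in producing a candidate whose reduction is a unit and then running the Nakayama argument.
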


The proof is essentially the same as that of Theorem 5.8 of \cite{BobkovaGoerss}. Now, this result is the culmination of both Sections 4 and 5 of that paper and so we give some ideas of how the adaptation goes. The penultimate result in  \cite{BobkovaGoerss} going into Theorem 5.8 is their Corollary  5.3, which we explain in our context here.

We will be interested in $E_*$-based Hurewicz homomorphisms
\[
\pi_* F(E^{hG_{48}}, X) \to  \Hom_{\mathcal{E}\mathcal{G}_2}(E_*E^{hG_{48}}, E_*E^{hG_{48}})\cong (E_*[[\mathbb{G}_2/{G_{48}}]])^{G_{48}} ,
\]
where the isomorphism follows from \cite[Prop. 2.7]{ghmr},
and
\[
\pi_* X \to \Hom_{E_*E}(E_*, E_*X) \cong  \Hom_{\mathcal{E}\mathcal{G}_2}(E_*, E_*E^{hG_{48}})\cong (E_*)^{G_{48}} .
\]

Recall that the $G_{48}$-invariants of $E_{48}$ are isomorphic to a free module over the power series ring $ \Z_2[\![j]\!]$ with generator $\Delta^2$. See, for example, Section 2.2 of \cite{BobkovaGoerss} for more details. Consider the diagram
\[\xymatrix{ \pi_{48}F(E^{hG_{48}},X)  \ar[r]^-{\iota_*} \ar[d]^-H & \pi_{48}X \ar[d]^-H \\
\Hom_{E^0[\![\mathbb{G}_2]\!]}(E^0[\![\G_2/G_{48}]\!], E^{-48}[\![\G_2/G_{48}]\!]) \ar[r]^-{\iota^*} & \Hom_{E^0[\![\mathbb{G}_2]\!]}(E^0[\![\G_2/G_{48}]\!], E^{-48})  \ar[d]^-\cong\\
 & E_{48}^{G_{48}} \cong \Z_2[\![j]\!]\Delta^2\ar[d]^-{\epsilon} \\
 & \F_2
}\]
with vertical arrows $H$ the Hurewicz maps, $\iota_*$ induced by the unit of  $E^{hG_{48}}$,  and $\epsilon$  reduction modulo $(2,j)$. 

The following result is proved exactly as in \cite{BobkovaGoerss}. 
\begin{lemma}[\cite{BobkovaGoerss}, Corollary 5.3]
For any $f(j)\Delta^2$ in $E_{48}^{G_{48}}$, there
 is a map
\[ \phi_f \colon \Sigma^{48}E^{hG_{48}} \to X \]
such that
\[ \epsilon(H\iota_*(\phi_f)) \equiv f(0) \mod 2. \]
\end{lemma}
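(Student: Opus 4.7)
The plan is to adapt the argument of \cite[Cor.~5.3]{BobkovaGoerss} to our setting. The enabling observation is that the underlying Morava modules (after forgetting Galois structure) in the sequence \eqref{eq:MM} agree with those appearing in the resolution of $E^{h\SS_2^1}$ constructed in \cite{BobkovaGoerss}, and the constituent spectra $E^{hG_{48}}$, $E^{hG_{12}}$, and $X = C_3$ are literally the same in both settings. Consequently, the obstruction-theoretic computations of loc.~cit.~apply essentially verbatim, and the task reduces to selecting lifts compatibly with the Galois-twisted structure.

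First, I would construct a distinguished map $\phi_1 \colon \Sigma^{48} E^{hG_{48}} \to X$ realizing the class $\Delta^2 \in E_{48}^{G_{48}}$. This is done by lifting stage-by-stage through the tower of fibrations \eqref{eq:toweroffib}. Applying $\pi_* F(E^{hG_{48}}, -)$ to the cofiber sequences $\Sigma^{-s} E^{hG_{12}} \to Z_{s-1} \to Z_s$ (for $s = 1, 2$) and to $\Sigma^{-3} X \to E^{h\GG_2^1} \to Z_2$ produces long exact sequences whose obstructions lie in the groups $\pi_* F(E^{hG_{48}}, E^{hG_{12}})$ in degrees $46$ and $47$. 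These are controlled by the descent spectral sequence
\[
H^s\bigl(G_{48}, E_t F(E^{hG_{48}}, E^{hG_{12}})\bigr) \Longrightarrow \pi_{t-s} F(E^{hG_{48}}, E^{hG_{12}}),
\]
whose relevant vanishing in the necessary degrees is the content of the computations in \cite[\S 5]{BobkovaGoerss}. A compatible system of lifts then yields $\phi_1$, which by construction satisfies $\epsilon(H\iota_*(\phi_1)) \equiv 1 \pmod 2$.

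For a general $f(j) \in \Z_2[\![j]\!]$, set
\[
\phi_f := \phi_1 \circ \Sigma^{48} m_{f(j)},
\]
where $m_{f(j)} \colon E^{hG_{48}} \to E^{hG_{48}}$ denotes multiplication by $f(j) \in \pi_0 E^{hG_{48}} \cong \Z_2[\![j]\!]$. Naturality of the Hurewicz homomorphism and the $\pi_0 E^{hG_{48}}$-module structure on $E^{-48}[\![\GG_2/G_{48}]\!]$ give $\iota_*(\phi_f) = f(j) \cdot \iota_*(\phi_1)$ in $\pi_{48} X$, and hence
\[
\epsilon(H \iota_*(\phi_f)) \equiv f(0) \cdot \epsilon(H\iota_*(\phi_1)) \equiv f(0) \pmod 2,
\]
as required.

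The main obstacle is the construction of $\phi_1$, specifically verifying that the obstruction classes at each stage of the tower indeed vanish. This is not automatic: it relies on delicate sparsity statements about $\pi_* E^{hG_{12}}$ and $\pi_* E^{hG_{48}}$ in degrees just below $48$, which are themselves consequences of explicit descent $E_2$-computations. Crucially, the passage from $\SS_2^1$ to $\GG_2^1$ in our resolution introduces no new obstructions at this level: the spectra $E^{hG_{48}}$, $E^{hG_{12}}$, and $X$ are the same as in \cite{BobkovaGoerss}, and the Galois decoration on the resolution \eqref{eq:MM} is not seen by the obstruction computations. Hence the adaptation is direct.
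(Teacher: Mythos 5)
The proposal has a genuine gap at its foundation. Your ``enabling observation'' --- that the constituent spectra $E^{hG_{48}}$, $E^{hG_{12}}$, and $X = C_3$ are ``literally the same in both settings'' as in \cite{BobkovaGoerss} --- is false. In \cite{BobkovaGoerss} the resolution is of $E^{h\SS_2^1}$ and involves the spectra $E^{hG_{24}}$ and $E^{hC_6}$, whereas the present resolution involves $E^{hG_{48}}$ and $E^{hG_{12}}$. These are genuinely different spectra: roughly, each is a $\Gal$-homotopy fixed point of the corresponding BG spectrum under an action twisting by the Galois group $\Gal(\F_4/\F_2)$, and $X = C_3$ is likewise changed. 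Similarly, the underlying Morava modules in \eqref{eq:MM} as $\GG_2$-modules differ from those in BG (only after restriction along $\SS_2 \subseteq \GG_2$ do they become isomorphic). Because the claim is wrong, the assertion that the obstruction-theoretic computations of BG ``apply essentially verbatim'' is unsupported as stated. What actually justifies the transfer --- and this is what the paper is implicitly invoking by saying the proof is ``exactly as in \cite{BobkovaGoerss}'' --- is Galois descent: since $\W$ is a $\Gal$-Galois extension of $\Z_2$, the homotopy fixed point spectral sequence for the residual $\Gal$-action collapses, so $\pi_* E^{hG_{48}} \cong (\pi_* E^{hG_{24}})^{\Gal}$ (and likewise for $E^{hG_{12}}$ and the relevant function spectra), preserving the sparsity and vanishing patterns that drive BG's obstruction argument. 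Your proposal should articulate this descent step; omitting it leaves the transfer unjustified and replaces it with a statement that is simply not true.

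Two secondary remarks. First, the degree bookkeeping (``obstructions in degrees 46 and 47'') should be double-checked against the tower \eqref{eq:toweroffib}: the relevant shifts are $\Sigma^{-1}, \Sigma^{-2}, \Sigma^{-3}$, so the obstruction groups for lifting a class of degree 48 through the stages land in $\pi_{48}$, $\pi_{49}$, $\pi_{50}$ of the appropriate function spectra rather than below 48. Second, the reduction to the case $f=1$ by precomposing with $m_{f(j)}$ is a legitimate simplification and is not how \cite{BobkovaGoerss} organizes the argument (they handle all $f$ uniformly through the tower), but this deviation is harmless; the $\Z_2[\![j]\!]$-linearity of the Hurewicz composite that it relies on does hold by naturality, provided one tracks it through the identifications $\pi_0 E^{hG_{48}} \cong \Z_2[\![j]\!]$ and $E_{48}^{G_{48}} \cong \Z_2[\![j]\!]\Delta^2$.
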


We can now finally prove \Cref{thm:last}.
\begin{proof}[Proof of \Cref{thm:last}]
We can run the proof of Theorem 5.8 in \cite{BobkovaGoerss} adapted to our context, and the arguments are actually simpler working with $\GG_2^1$: We choose $f(j)=1$ above, giving us a map
\[\phi_1 \colon \Sigma^{48}E^{hG_{48}} \to X \]
such that  $\epsilon(H\iota_*(\phi_1)) =1$. To show that $\phi_1$ is an equivalence, it is enough to prove that it induces an isomorphism on $E^0(-)$. We apply $E^0(-)$ to this map to get
\[ 
\xymatrix{E^0(X)  \ar[r]^-{E^0(\phi_1) } \ar[d]_-\cong &  E^{-48}(E^{hG_{48}})  \ar[d]^-\cong\\
E^0[\![ \GG_2/G_{48}]\!] \ar[r] & E^{-48}[\![ \GG_2/G_{48}]\!].
}  \]
If we restrict along $K\subset \GG_2$, using the fact that $K \to \GG_2 \to \GG_2/G_{48}$ is an isomorphism of left $K$-sets, we get a map of $ E^0[\![K]\!] $-modules
\[ E^0[\![K]\!]  \to E^{-48}[\![ K]\!]
\]
and it is enough, using a form of Nakayma's Lemma, to prove that this is an isomorphism after reduction modulo the maximal ideal $\mathfrak{m}_K$ of $E^0[\![K]\!] $, which is the kernel of the composition
\[E^0[\![K]\!]  \to E^0 \to \F_4.\] 
But, consider the diagram
\[\xymatrix{
\Hom_{E^0[\![\GG_2]\!]}(E^0[\![\G_2/G_{48}]\!], E^{-48}[\![\G_2/G_{48}]\!]) \ar[r] \ar[d]^-{\iota^*}&  \Hom_{E^0[\![K]\!]}(E^0[\![K]\!], E^{-48}[\![K]\!]) \ar[d]^-{\iota^*}\\
\Hom_{E^0[\![\GG_2]\!]}(E^0[\![\G_2/G_{48}]\!], E^{-48}) \ar[r]  \ar[d]^-\cong  &  \Hom_{E^0[\![K]\!]}(E^0[\![K]\!], E^{-48})  \ar[d]^-\subset \\
E_{48}^{G_{48}} \cong \Z_2[\![j]\!] \ar[r]^-\subset \ar[d]^-\epsilon & E_{48} \cong \W[\![u_1]\!] \ar[d]^-\epsilon \\
\F_2 \ar[r] & \F_4
}\]
where the top horizontal arrow is induced by restriction along the inclusion $K\subset \G_2$. Following $E^0(\phi_1)$ from the top left to bottom right corner, we see that it maps to $1\in \F_4$. That means that, modulo $\mathfrak{m}_K$, $E^0(\phi_1)$ induces the identity map, which is clearly an isomorphism.
\end{proof}


\newcommand{\etalchar}[1]{$^{#1}$}

\end{document}